\renewcommand{\ti}{{\times}}
\numberwithin{equation}{section}
\numberwithin{figure}{section}
\renewcommand*\env@cases[1][1.2]{%
  \let\@ifnextchar\new@ifnextchar
  \left\lbrace
  \def\arraystretch{#1}%
  \array{@{}c@{\quad}l@{}}%
}
\newcommand{\ThreeVect}[3]{\!\bma{@{}c@{}} #1\\[-0.1em] #2\\[-0.1em] #3 \ema\!}
\newcommand{\LB}{\lambda_\rmB}
\newcommand{\mfC}{\mathfrak C}
\newcommand{\mfU}{\mathfrak U}
\newcommand{\ALPHA}{\rma_{\rml\rmo}}
\newcommand{\BETA}{\rma_{\rmu\rmp}}
\newcommand{\dom}{\mathop{\mafo{dom}}}
\newcommand{\STEP}[1]{\underline{\em #1}}
\newcommand{\TODO}[1]{}
\begin{document}

\title{Existence of similarity profiles for \\  diffusion equations and systems%
    \thanks{Research partially supported by DFG via SFB\,910 ``Control of self-organizing
nonlinear systems'' (project no.\,163436311), subproject A5 ``Pattern
formation in coupled parabolic systems''.} }

\author{Alexander Mielke\thanks{Weierstraß-Institut f\"ur Angewandte
   Analysis und Stochastik, Mohrenstr.\,39, 10117 Berlin and  Humboldt
 Universit\"at zu Berlin, Institut f\"ur Mathematik, Rudower Chaussee 25, 12489
 Berlin, Germany.}{}\ 
\ 
and 
Stefanie Schindler\thanks{Weierstraß-Institut f\"ur Angewandte
   Analysis und Stochastik, Mohrenstr.\,39, 10117 Berlin, Germany.}
}

\date{5 April 2023}
 
\maketitle

\TODO{DRAFT  \tiny \usdate\today, \currenttime}

\begin{abstract}
  \textbf{Abstract.} We study the existence of similarity profiles for
  diffusion equations and reaction diffusion systems on the real line, where
  the different nontrivial limits are imposed for $ x \to -\infty$ and
  $x \to +\infty$. These similarity profiles solve a coupled system of nonlinear ODEs
  that can be treated by monotone operator theory.
\end{abstract}

\noindent
\emph{Keywords:} Self-similarity, similarity profiles, reaction-diffusion
systems, constrained profile equation, monotone operators. \smallskip

\noindent
\emph{MSC:} 35C06 35K40 35K57 35K65 80M30

\TODO{To be cited \cite{GaJoRa22ASSD} !! Here or in second paper. 
\\
To be cited: \cite{Sham73CDD, Sham73CDD2, Sham76CDD3}, \cite{VanPel77CSSN} 
}

\section{Introduction}
\label{se:intro}

Similarity profiles play an important role in the longtime behavior of
nonlinear diffusion problems as well as in certain reaction-diffusion systems,
if we consider problems posed on the whole space $\Omega=\R^d$. For simplicity
we treat the one-dimensional case $\R^d=\R$ only and leave the case
$d>1$ for subsequent work.

We consider a system of coupled diffusion equations on the real line $\Omega=\R^1$:
\begin{equation}
  \label{eq:DiffSyst}
  \dot\bfu = \big( \bfA(\bfu)\big)_{xx} \quad \text{for  }t>0, \ x\in \R, \qquad \bfu(t,\pm
  \infty) = \bfU_\pm \text{ for }t>0, 
\end{equation}
where $\dot \bfu= \bfu_t$ and $\bfA:\R^m\to \R^m$ is a smooth monotone mapping. Here
$\bfU_\pm=\bfu(t,\pm\infty):= \lim_{x\to \pm \infty} \bfu(t,x)$  are nontrivial
boundary conditions, namely $\bfU_-$ for $x \to -\infty$ and $\bfU_+\neq \bfU_-$ for $x
\to + \infty$. 

The aim of this paper concerns the existence of self-similar solutions for this
system. As $\bfu$ cannot be scaled because of the fixed boundary conditions,
we use the parabolic similarity coordinates $\tau=\log(t{+}1)$ and $y=
x/(t{+}1)^{1/2}$. 
Setting $\wt\bfu(\tau,y) = \bfu(t,x)$, the transformed equation reads  
\begin{equation}
  \label{eq:DiffSystSim}
  \wt\bfu_\tau = \big(\bfA(\wt\bfu)\big)_{yy}+ \frac{y}2 \wt\bfu_y \ \text{ for } \tau >0,  \ y \in
  \R, \quad   \wt\bfu(\tau,\pm \infty) = \bfU_\pm .  
\end{equation}
A stationary solution $\bfU \in \rmB\rmC^2(\R;\R^m)$ of this equation is called
\emph{similarity profile} as it gives rise to a self-similar solution
\[
\bfu(t,x)= \bfU\big(x/(t{+}1)^{1/2}\big) 
\]
of the original diffusion system  \eqref{eq:DiffSyst}.  

Our main goal is to show that the following boundary value problem for a
second order ODE in $\R^m$ has a (unique) solution:
\begin{equation}
  \label{eq:I.ProfileEqn}
  \big( \bfA(\bfU(y))\big)'' + \frac y2 \,\bfU'(y)=0 \ \text{ for }y\in \R,
  \qquad \lim_{y\to \pm \infty} \bfU(y) = \bfU_\pm. 
\end{equation}
This equation is called the \emph{profile equation}. 

Our main existence result of self-similar profiles is formulated in Theorem
\ref{th:VectValProf} and concerns a vector-valued generalization of the scalar
monotonicity result developed in \cite[Thm.\,3.1]{GalMie98DMSS}.  The advantage
of using monotonicity in contrast to the ODE-type arguments in previous works,
see e.g.\ \cite{Sham76CDD3, VanPel77CSSN}, is that we can also handle the
vector-valued case.

Section \ref{se:VectorValued} provides a careful theory on existence and
uniqueness of similarity profiles solving \eqref{eq:I.ProfileEqn}. In
particular, we show that the solutions and their derivatives can be estimated
in a linear way by $\Delta_\pm:= \big|\bfU_+-\bfU_-\big|$ with prefactors that
are given explicitly in terms of the constants $\delta, \ALPHA, \BETA=
\mafo{Lip}(\bfA)$, see \eqref{eq:Cond.bfA}, where the crucial assumption is
the monotonicity $\langle \bfA(\bfu){-}\bfA(\bfw), \bfu{-}\bfw\rangle \geq \ALPHA
|\bfu{-}\bfw|^2\geq 0$.    

In Section \ref{se:ScalProfiles} we specialize to the scalar case and improve
the estimates significantly. We show monotonicity of the profile $U:\R\to [U_-,U_+]$
and exponential decay of the flux $Q(y) = A'(U(y))U'(y)$, namely $0\leq Q(y) \leq
\ee^{-y^2/(4D^*)}$, where $D^*=\max\bigset{A'(u)}{u \in [U_-,U_+]}$. In particular, we
allow for degenerate cases where $A'(U_+)=0$ and $A'(U_-)=0$, which may lead to the
case $U(y)=U_*$ for all $y\geq y_+^*$ if $A'(u)=\mathscr{O} (U_+{-}u)$ for $u\nearrow U_+$.       

In Section \ref{se:PME} we study the stability of the similarity profile $U$ as
steady solution of the parabolically rescaled diffusion equation 
\begin{equation}
  \label{eq:I.resPME}
  u_\tau = \big( A(u)\big)_{y y} + \frac y2 \,u_y \quad \text{for } (t,y)\in
  {]0,\infty[} \ti \R, \qquad u(\tau,\pm\infty) = U_\pm.
\end{equation}
For this, we consider relative entropies of the form
\[
\calH_\phi(u(\tau))= \int_\R \phi(u(\tau,y)/U(y)) \,U(y)\,\dd y 
\]
for suitable convex functions $\phi$ with $\phi(1)=0=\phi'(1)$. Theorem
\ref{th:ExpDecay.CA} considers the case of a general $A$ with $|A''(u)|\leq C_\rmA
<\infty$ and Theorem \ref{th:PME.Cvg} treats the porous medium equation
with $A(u)=u^m$. In both cases we provide conditions on $U$ that allow us to
conclude a global decay estimate in the Hellinger distance
\[
c_0 \big\| \sqrt{u(\tau,\cdot)} - \sqrt{U}\big\|_{\rmL^2(\R)}^2 \leq 
\calH_\phi(u(\tau))\leq \ee^{-\Lambda \tau}
\calH_\phi(u(0)) \quad \text{for all }\tau>0,
\]
with $\Lambda = \frac12 - \mathscr{O}(|U_+{-}U_-|)\leq 1/2$. In particular, for the flat profiles
$U\equiv U_\pm$ we always obtain the trivial decay like $\ee^{-\tau/2}$ which
is induced by the drift term $\frac y2\,u_y$ only.  We also refer to
\cite{VanPel77ABSN} for convergence results to self-similar profiles, but they
are quite different and rely on comparison principle arguments, whereas our
entropy approach can be applied to systems as well, see
\cite{MiHaMa15UDER,MieMit18CEER,MieSch22?CSSP}.

Section \ref{se:ApplRDS} can be seen as a preparation for the theory in
\cite{MieSch22?CSSP} that is concerned with reaction-diffusion systems of the
type 
\[
\dot\bfc = \bfD \bfc_{xx} + \bfR(\bfc), \qquad \bfc(t,\pm\infty) = \bfC_\pm,
\]
where we impose nontrivial boundary conditions at $x=\pm \infty$. To study
the diffusive mixing as introduced in \cite{GalMie98DMSS}, we transform into
parabolic similarity coordinates as for \eqref{eq:DiffSystSim} and obtain 
\[
\bfc_\tau = \bfD \bfc_{yy} + \frac y2 \bfc_y + \ee^\tau\, \bfR(\bfc), \qquad
\bfc(\tau,\pm\infty) = \bfC_\pm. 
\]
While in \cite{MieSch22?CSSP} the term $\ee^\tau\, \bfR(\bfc)$ is treated in
full generality, we look here at the simplified model where we assume
$\bfR(\bfc)=0$ as an algebraic constraint and replace the limit
``$\infty\cdot \bm0$'' of ``$\ee^\tau\cdot \bfR(\bfc)$'' for $\tau\to \infty$ by a
vector-valued Lagrange multiplier $\bflambda$ lying in the span of
$\bfR(\cdot)$ (the stoichiometric subspace, see Section \ref{su:ReducedSyst}).

The set of equilibria $\bigset{\bfc\in \R^{i_*}}{ \bfR(\bfc)=0 }$   is
parametrized in the form $\bfc = \Psi(\bfu)$ for $\bfu\in \R^m$ such that $\bbQ
\Psi(\bfu)=\bfu$ for a suitable linear stoichiometric mapping $\bbQ$. This leads to
the reduced parabolic equation 
\[
\bfu_\tau =  \big(\bfA(\bfu)\big)_{yy} + \frac y2\bfu \quad \text{with } \bfA(\bfu) = \bbQ
\bfD \Psi(\bfu).
\] 
In Section \ref{se:ApplRDS} we provide several examples in which we are able to
specify conditions on the reactions and the diffusion constants in
$\bfD=\mafo{diag}(d_j)$ that guarantee that $\bfu \mapsto \bfA(\bfu)$ is indeed
monotone and satisfies the assumptions of the main existence result for self-similar
profiles $\bfU$ solving \eqref{eq:I.ProfileEqn}. In
particular, Section \ref{su:OneReact3Spec} considers a case with three species,
i.e.\ $\bfc\in \R^3$ and one reaction, such that $\bfu\in \R^2$ is
vector-valued.  

Section \ref{se:VariousSyst} provides two more systems where self-similar
profiles are important to describe the longtime asymptotics. First, we recall
the results in \cite{BriKup92RGGL, GalMie98DMSS} which establish diffusive
mixing for roll pattern in the Ginzburg-Landau equation with real
coefficients. Secondly, we comment on the recently established system of
degenerate parabolic equations that includes the porous medium equation and
is expected to have a rich structure of self-similar profiles, see \cite{Miel23TCDP}.

\section{Vector-valued self-similar profiles}
\label{se:VectorValued}

To provide a suitable functional analytical framework for our existence and
uniqueness theory, we set 
\[
\ol\bfu_\pm(y):= 
\begin{cases}
\bfU_\pm& \text{for } {\pm} y >0,\\[-0.25em]
\frac12\big(\bfU_-{+}\bfU_+\big) &\text{for } y=0.
\end{cases}
\]
In the following, we give a weak version of the profile equation
\eqref{eq:I.ProfileEqn}. 
We say that $\bfU \in \rmL^2_\mafo{loc}(\R;\R^m)$ is a \emph{stationary profile} for
\eqref{eq:DiffSystSim} if 
\begin{subequations}
  \label{eq:WeakStatProf}
\begin{align}
 \label{eq:WeakStatProf.a}
&\exists\, \bfv \in \rmH^1(\R;\R^m): \quad \bfU= \ol\bfu_\pm+ \bfv'  \text{ and } 
\\
 \label{eq:WeakStatProf.b}
&  \forall\, \bfpsi\in \rmC^2_\rmc(\R;\R^m):\ \ 
\int_\R\!\Big(\bfA(\bfU(y))\cdot \bfpsi''(y) - \bfU(y) \cdot \big( \frac
y2\bfpsi(y)\big)'  \Big) \dd y =0. 
\end{align}
\end{subequations}
In this formulation $\bfU$ does not need to have any derivative and may be even
discontinuous. We will see that this weak form is important because in
degenerate cases the solution $\bfu$ has low regularity, while we are still
able to proof existence of solutions. For instance, in the very degenerate case
$\bfA(\bfU_+)=\bfA(\bfU_-)$ (which is still consistent with the monotonicity
desired below, but gives
$\rmD\bfA\big((1{-}\theta) \bfU_- {+} \theta\bfU_+\big)(\bfU_+{-}\bfU_-)=0$ for
all $\theta\in [0,1]$), we see that the piecewise constant function
$\bfU=\ol\bfu_\pm$ is a stationary profile solving \eqref{eq:WeakStatProf}. 

We will see later in the Sections \ref{se:ScalProfiles} and \ref{se:PME} 
that the scalar porous medium equation with $\bfA(u)=\frac1m u^m \in \R^1$ 
and $m>1$, leads in the case $U_- = 0$ to
profiles $U\in \rmB\rmC^0(\R)$ with $U(y)=0 $ for all $y \leq y_*<0$ and
$U(y)=c (y-y_*)^{1/(m-1)} +$h.o.t.\ for $y\to y^+$. Hence, $U$ is not twice
differentiable for $m\geq 2$ and $U'$ does not lie in $\rmH^1_\text{loc}(\R)$ for
$m\geq 3$.

Moreover, the requirement \eqref{eq:WeakStatProf.a} is slightly stronger than
asking for $\bfU-\ol\bfu_\pm\in \rmL^2(\R;\R^m)$. Indeed, using the embedding
$\rmH^1(\R;\R^m) \subset \rmC^0_0(\R;\R^m)$ (space of continuous and
decaying functions), \eqref{eq:WeakStatProf.a} implies that the following
improper integral exists:
\begin{equation}
  \label{eq:ImproperInt}
\int_\R \big(\bfU{-}\ol\bfu_\pm\big)\dd y = \lim_{a,b\to \infty} \int_{-a}^b
 \big(\bfU{-}\ol\bfu_\pm\big)\dd y = \lim_{a,b\to \infty}(\bfv(b){-}\bfv(-a)) =0.
\end{equation}

In the following example of linear equations we provide explicit solutions in
terms of vector-valued error functions (integrals of Gaussians). We especially
address the case of degenerate $\bfA(\bfu)=\bbA \bfu$, where $\bbA$ has purely
imaginary eigenvalues, in that case $\bfU$ may be discontinuous or may converge
to $\ol\bfu_\pm$ only like $\mathscr{O}(1/|y|)$. Moreover, we address the approximation
of $\bbA$ by the regular case $\bbA_\eps= \bbA+\eps\bbI$, which will be done in
the proof of the main existence result in Theorem \ref{th:VectValProf}, see
Step 5 there. 

\begin{example}[Linear, vector-valued case]
\label{ex:LinVect}
We consider the case $\bfA(\bfu)=\bbA \bfu$ where the matrix $\bbA\in \R^{m\ti
  m}$ is monotone, i.e.\ $\bfv\cdot \bbA\bfv\geq \ALPHA |\bfv|^2$ with
$\ALPHA\geq 0$. 

(I) At first, let $\ALPHA>0$ such that $\bbA^{-1}$ exists and all
its eigenvalues have positive real parts. From $\bbA\bfU'' + \frac y2 \bfU'=0$
we easily find $\bfU'(y)= \ee^{-y^2 (4\bbA)^{-1}} \bfU'(0)$. Using
$\int_\R \ee^{-y^2(4\bbA)^{-1}} \dd y = (4\pi\bbA)^{1/2} $ (here $\bbA^{1/2}$
is the root with eigenvalues satisfying $\left|\arg \lambda\right|< \pi/4$), we
find the profile connecting $\bfU_-$ and $\bfU_+$ in the form
\begin{equation}
  \label{eq:ExplicLinCase}
  \bfU(y)= \bfU_- + \int_{-\infty}^y \frac1{\sqrt{4\pi}}\, \bbA^{-1/2}\,
\ee^{-\eta^2(4\bbA)^{-1}} \dd \eta \:\big(\bfU_+{-}\bfU_-\big) .
\end{equation}

(II) The above formula can also be extended to the case $\ALPHA=0$, where
$\bbA^{-1}$ may no longer exist. For this it suffices to replace $\bbA$ by
$\bbA_\eps=\bbA{+}\eps I$ and take the limit $\eps\to 0^+$. Indeed, if $\bbA$
has a single real eigenvalue $\lambda=0$, then $\bbA_\eps$ has the eigenvalue
$\lambda_\eps=\eps$. Using a suitable basis, it suffices to observe that
$ \int_{-\infty}^y \frac1{\sqrt{4\pi \lambda_\eps}}\,
\ee^{-\eta^2(4\lambda_\eps)^{-1}} \dd \eta= \Phi(y/\sqrt\eps)$ converges to $0$
for $y <0$ and to $1$ for $y>1$. Thus,
$\bfU_\eps= \bfU_-+\Phi(y/\sqrt\eps)(\bfU_+{-}\bfU_-) $ converges to the piecewise 
constant limit $\ol\bfu_\pm$.

(III) If $\bbA$ has a single pair of purely imaginary eigenvalues $\pm \ii \omega$ with
$\omega >0$, then the
limit procedure leads to the linear ODE
\[
\bbA_\eps \bfU''_\eps + \frac y2 \bfU'_\eps = 0 \quad 
\text{with } \bbA_\eps= \binom{\eps\  {-} \omega}{\omega \quad \eps}.
\]
Turning the vector $\bfU_\eps=(U^1_\eps,U^2_\eps)$ into a complex number
$U_\eps=U_\eps^1 {+} \ii U_\eps^2 \in \C$, we have to solve
$2\lambda_\eps U''_\eps +y U'_\eps =0$ with $\lambda_\eps=\eps {+} \ii
\omega$. Of course, \eqref{eq:ExplicLinCase} holds again but now in (scalar)
complex numbers, and the integrand in \eqref{eq:ExplicLinCase}  (which equals
$U'_\eps$ up to the factor $U_+{-}U_-$) reads
\[ 
\frac1{\sqrt{4\pi \lambda_\eps}} \:
\ee^{\ii \eta^2 \omega/(4\eps^2{+}4\omega^2)}\: 
\ee^{- \eta^2 \eps/(4\eps^2{+}4\omega^2)}.
\]
Hence, for $\eps\to 0^+$ the exponential decay of the integrand is lost, but
the improper integrals for $\eta\in {]{-}\infty,y[}$ still have a good limit
because of the increasing oscillations as in the Fresnel integrals
$\int_\R \ee^{\ii \eta^2/(4\omega)} \dd \eta= \sqrt{2\pi\omega} (1{+}\ii)$.  We
obtain the expansion
\[
U_0(y) = U_- - \frac{\ii \sqrt 2\,\omega}{y} \, \ee^{\ii y^2/(4\omega)} +
\mathscr{O}(1/|y|^3) \quad \text{for } y\to -\infty.
\]
Clearly, $U_0$ can be decomposed into $U_0(y) = \ol u_\pm(y) + v'(y)$ with $v\in
\rmH^1(\R;\C)$ where $|v(y)| \leq C/(1{+}y^2)$ and $|v'(y)|\leq
C/(1{+}|y|)$. Moreover, the improper integral 
$\int_\R y(U_0{-} \ol u_\pm)\dd y $ exists and equals $\ii \omega(U_-{-}U_+)$. 
\end{example}

For the proof of the following result, we introduce a smoothened version of the
function $\ol\bfu_\pm$ by fixing an interpolating function
$\chi\in \rmC^\infty(\R; [-1,1])$ satisfying
\[
\chi(y)=\pm 1\quad \text{for }\pm y\geq 1 \qquad \text{and} \qquad
\chi(-y)=-\chi(y).
\]
For given $\bfU_-,\bfU_+\in \R^m$ and a parameter $\BETA>0$, which will be
specified below, we define the interpolation functions $\wt\bfu_\pm
\in \rmC^\infty(\R;\R^m)$ via 
\begin{equation}
  \label{eq:Cond.wt.bfu*}
  \wt\bfu_\pm(y) =\frac{1{-}\chi\big(y/\sqrt{\BETA}\big)}2 \,\bfU_- 
  + \frac{1{+}\chi(y/\sqrt{\BETA})}{2} \, \bfU_+ 
\end{equation}
such that $\wt\bfu_\pm(y)=\ol\bfu_\pm(y)$ for $\vert y \vert \geq 1/\sqrt{\BETA}$. In
the sequel, $C_\chi$ will denote (possibly different) 
constants that depend only on $\chi$, and,
thus, can be seen as universal constants that are independent of the data
$\bfA$ and $\bfU_\pm$ of our problem. For example, the $\rmL^2$ norm of
$\wt\bfu'_\pm$ and $\wt\bfu''_\pm$ scale as follows:
\begin{equation}
  \label{eq:wtu.pri.BETA}
  \|y^j\wt\bfu'_\pm\|_{\rmL^2} \leq C^\chi_1 \BETA^{(2j-1)/4} \Delta_\pm \ \text{ and
  } \ 
\| \wt\bfu''_\pm\|_{\rmL^2} \leq C^\chi_2 \BETA^{-3/4} \Delta_\pm \ 
\text{ with }
\Delta_\pm : =| \bfU_+{-}\bfU_-|.
\end{equation}
The proof of the following result is based on \cite[Thm.\,3.1]{GalMie98DMSS}, 
which exploits monotonicity arguments to obtain existence and uniqueness.
Here we generalize this approach to the vector-valued case and provide a careful 
bookkeeping of constants in the a priori estimates.

\begin{theorem}[Existence of similarity profiles]
\label{th:VectValProf}
Let $\bfA\in \rmC^1(\R^m;\R^m) $ satisfy 
\begin{subequations}
\label{eq:Cond.bfA}
\begin{align}
\label{eq:Cond.bfA.a}
&\exists\, \BETA>0\ 
\forall\, \bfu,\bfw\in \R^m:\
\big| \bfA(\bfu)- \bfA( \bfw) \big| \leq \BETA |\bfu{-}\bfw|,
\\
\label{eq:Cond.bfA.b}
& \exists\, \ALPHA \geq 0\ \forall\, \bfu,\bfw\in \R^m:\
\langle \bfA(\bfu){-}\bfA(\bfw), \bfu {-}\bfw \rangle \geq \ALPHA
|\bfu{-}\bfw|^2,
\\
\label{eq:Cond.bfA.c}
& \exists\, \delta \geq 0\ \forall\, \bfu, \bfv \in \R^m:\quad 
  \langle \bfv , \rmD \bfA(\bfu) \bfv \rangle \geq \delta\, 
  \big| \rmD \bfA(\bfu) \bfv \big|^2. 
\end{align}
\end{subequations}
If $ \ALPHA {+} \delta > 0 $, then for each pair $(\bfU_-,\bfU_+) \in \R^m\ti \R^m$
there exists a unique 
stationary profile $\bfU=\ol\bfu_\pm+\bfv'$ satisfying \eqref{eq:WeakStatProf}
and the a priori estimate 
\begin{equation}
  \label{eq:APrioriEst}
  \ALPHA\, \|\bfU'\|^2_{\rmL^2} + \|\bfU{-}\wt\bfu_\pm\|^2_{\rmL^2} +
  \frac1{\BETA}\,\| \bfv\|^2_{\rmH^1} \leq C_\chi \BETA^{1/2} \Delta_\pm^2 . 
\end{equation}
Moreover, the flux $\bfq(y)=\big( \bfA(\bfU(y) \big)'=\rmD\bfA (\bfU(y)) \bfU'(y)$
satisfies the pointwise estimate 
\begin{equation}
  \label{eq:FluxEst}
  |\bfq(y)|=\big|\bfA(\bfU)'\big| \leq \ee^{-\delta y^2/4} \,
  C_\chi\,\BETA^{1/2} \, \Delta_\pm  \quad \text{for all } y \in \R. 
\end{equation}
For $\delta>0$ we have the integral relations 
\begin{equation}
  \label{eq:Rel.U.olUpm}
\begin{aligned}
&\int_\R\! \big( \bfU(y)-\ol\bfu_\pm(y)\big) \dd y = 0\in \R^m 
\ \text{ and } \ \\
&
\int_\R\! y\,  \big(\ol\bfu_\pm(y) - \bfU(y) \big) \dd y 
 = \bfA(U_+) - \bfA(U_-) \in \R^m. 
\end{aligned}
\end{equation}

If\/ $\rmD\bfA(\bfu)\in \R^{m\ti m}$ is invertible (as is always the case for
$\ALPHA>0$), then $\bfU\in \rmB\rmC^0(\R;\R^m)\cap
\rmH^1_\mafo{loc}(\R;\R^m) $. If $\bfA$ additionally satisfies 
\begin{equation}
  \label{eq:bfA.smooth}
\bfA\in \rmC^k_\mafo{loc}(\R^m;\R^m) \text{ for }k\in \N\quad \text{and} \quad 
\forall\, y\in \R :\  \rmD\bfA(\bfU(y))\in \R^{m\ti m} \text{ is
  invertible},
\end{equation}
then the profile $\bfU$ satisfies $\bfU  \in \rmB\rmC^k(\R;\R^m)$. 
\end{theorem}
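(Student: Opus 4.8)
The plan is to obtain the profile $\bfU$ as the limit of solutions to regularized problems and to derive the a priori estimates directly from the monotone structure of the weak formulation \eqref{eq:WeakStatProf}. I would proceed in the following steps. First, reformulate the problem in terms of the unknown $\bfv\in\rmH^1(\R;\R^m)$ via $\bfU=\wt\bfu_\pm+\bfv'$ (note $\wt\bfu_\pm$ and $\ol\bfu_\pm$ differ only on a bounded set, so this is equivalent to \eqref{eq:WeakStatProf.a} up to an $\rmH^1$ correction absorbed into $\bfv$). Inserting $\bfpsi=\bfv$-type test functions into \eqref{eq:WeakStatProf.b} and integrating by parts produces a nonlinear equation for $\bfv$ of monotone-operator type, whose principal part comes from $\bfA$ via assumptions \eqref{eq:Cond.bfA.a}--\eqref{eq:Cond.bfA.b} and whose coercivity is supplied by either the $\ALPHA$-monotonicity (when $\ALPHA>0$) or by condition \eqref{eq:Cond.bfA.c} controlling $\rmD\bfA(\bfu)\bfv$ (when $\delta>0$).

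Second, treat the regular case $\ALPHA>0$ first: here $\bfA$ is a Lipschitz, uniformly monotone map, the operator is bounded, coercive and strictly monotone on the appropriate Hilbert space built from $\bfv$ together with the weighted space where $y\bfU'$ lives, so the Browder--Minty theorem gives existence and uniqueness. The a priori bound \eqref{eq:APrioriEst} is then obtained by testing with $\bfv$ itself and with a cutoff of $y\mapsto y\,\bfU(y)$, using \eqref{eq:wtu.pri.BETA} to bound the forcing terms coming from $\wt\bfu_\pm'$ and $\wt\bfu_\pm''$; the factor $C_\chi\BETA^{1/2}\Delta_\pm^2$ comes out by collecting the scaling powers of $\BETA$. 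The flux estimate \eqref{eq:FluxEst} for $\delta>0$ comes from observing that $\bfq=\bfA(\bfU)'$ satisfies, in the weak sense, $\bfq'=-\tfrac{y}{2}\bfU'$ hence $\tfrac{\dd}{\dd y}|\bfq|^2 = 2\langle \bfq,\bfq'\rangle = -y\langle \bfq,\bfU'\rangle \le -y\delta^{-1}\cdots$; more precisely one uses $\langle \bfU', \bfq\rangle = \langle \bfU',\rmD\bfA(\bfU)\bfU'\rangle\ge \delta|\bfq|^2$, which after a Gronwall-type argument in $|y|$ gives the Gaussian decay $\ee^{-\delta y^2/4}$ with the correct prefactor inherited from the global bound. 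The integral relations \eqref{eq:Rel.U.olUpm} follow by testing \eqref{eq:WeakStatProf.b} with $\bfpsi$ approximating $\bm1$ and $y$ respectively and using the decay of $\bfq$ and of $\bfv$.

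Third, handle the degenerate case $\ALPHA=0<\delta$ by the approximation $\bbA\rightsquigarrow$ replacing $\bfA$ by $\bfA_\eps(\bfu)=\bfA(\bfu)+\eps\bfu$, which satisfies \eqref{eq:Cond.bfA} with $\ALPHA_\eps=\eps>0$ and the same $\delta$ (up to harmless modification), so the regular theory applies. The a priori estimates \eqref{eq:APrioriEst} and \eqref{eq:FluxEst} are uniform in $\eps$ because the right-hand sides depend only on $\BETA$, $\delta$, $\Delta_\pm$, not on $\ALPHA$; this gives weak compactness for $\bfv_\eps$ in $\rmH^1$ and for $\bfq_\eps$ in a weighted space, and the pointwise Gaussian bound gives strong local convergence of $\bfq_\eps$. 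Passing to the limit in the weak formulation — the delicate point being identification of the nonlinear limit $\bfA(\bfU)$, handled via the monotonicity trick (Minty's device) since $\bfU_\eps\to\bfU$ in $\rmL^2_\mafo{loc}$ — yields a stationary profile. Uniqueness follows from the monotonicity: the difference of two profiles tested against itself forces $\bfA(\bfU_1)=\bfA(\bfU_2)$ and $(\bfU_1-\bfU_2)$ to vanish in the direction where $\rmD\bfA$ is nondegenerate, combined with the constraint \eqref{eq:WeakStatProf.a}.

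\textbf{Regularity.} For the last assertions, when $\rmD\bfA(\bfU(y))$ is invertible one inverts the relation $\bfq=\rmD\bfA(\bfU)\bfU'$ to get $\bfU'=\rmD\bfA(\bfU)^{-1}\bfq$; since $\bfq$ is continuous (indeed $\bfq\in\rmH^1_\mafo{loc}$ because $\bfq'=-\tfrac{y}{2}\bfU'\in\rmL^2_\mafo{loc}$) and $\bfU\in\rmC^0$ by Sobolev embedding of $\bfv$, this shows $\bfU'\in\rmC^0$ locally, hence $\bfU\in\rmC^1$, and then bootstrapping through $\bfq''=-\tfrac12\bfU'-\tfrac{y}{2}\bfU''$ together with the smoothness \eqref{eq:bfA.smooth} of $\bfA$ raises the regularity to $\rmC^k$; boundedness $\bfU\in\rmB\rmC^k$ uses that all derivatives decay, which is propagated from the Gaussian decay of $\bfq$. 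I expect the main obstacle to be the degenerate limit $\eps\to0^+$: establishing that the weighted a priori bounds are genuinely uniform in $\eps$ (so that oscillatory behaviour as in Example \ref{ex:LinVect}(III) is controlled) and that the nonlinear term passes to the limit despite the possible loss of pointwise convergence of $\bfU_\eps$ — this is where condition \eqref{eq:Cond.bfA.c} and the Minty monotonicity argument have to be orchestrated carefully.
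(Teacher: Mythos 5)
Your proposal follows essentially the same route as the paper: the substitution $\bfU=\wt\bfu_\pm+\bfv'$, monotone operator theory (Browder--Minty) for $\ALPHA>0$, testing with $\bfv$ for \eqref{eq:APrioriEst}, the ODE/Gronwall argument based on \eqref{eq:Cond.bfA.c} with $\bfv=\bfU'$ for \eqref{eq:FluxEst}, the regularization $\bfA_\eps(\bfu)=\bfA(\bfu)+\eps\bfu$ combined with Minty's trick for $\ALPHA=0$, and a bootstrap for the regularity. Two details need repair: continuity of $\bfU$ does not follow from the Sobolev embedding of $\bfv$ (only $\bfv$, not $\bfv'=\bfU-\wt\bfu_\pm$, is continuous; instead one gets $\bfU\in\rmC^0$ from $\bfA(\bfU)\in\rmH^1_{\mathrm{loc}}\hookrightarrow\rmC^0$ together with local invertibility of $\bfA$), and uniqueness in the degenerate case does not come from ``nondegenerate directions of $\rmD\bfA$'' but from the strict monotonicity term $\tfrac12\|\bfv_1{-}\bfv_2\|_{\rmL^2}^2$ that the drift $\tfrac y2\bfU'$ contributes to the integrated operator, which is independent of $\bfA$ and forces $\bfv_1=\bfv_2$ even when the monotonicity of $\bfA$ only yields $\bfA(\bfU_1)=\bfA(\bfU_2)$.
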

Before providing the proof we remark that conditions \eqref{eq:Cond.bfA.a} and
\eqref{eq:Cond.bfA.b} imply condition \eqref{eq:Cond.bfA.c} with
$\delta=\ALPHA/(\BETA)^2$. However, $\delta \gg\ALPHA/(\BETA)^2$ is possible,
and interesting cases occur for $\ALPHA=0$ and $\delta=1/\BETA >0$, which is
the case for the scalar porous medium equation in Section \ref{se:PME}.

We emphasize that an important point in the proof is the exploitation of the term
$\frac12 y {\vdot} \bfu'$, which generates strict monotonicity and an a priori estimate
for $\Vert \bfU {-} \wt \bfu_{\pm} \Vert_{\rmL^2} $ independent of $\bfA$, see
\eqref{eq:APrioriEst}.\bigskip

\noindent
\begin{proof} Throughout this proof all constants $C_\chi$
  only depend on $\chi$, which is kept fixed, whereas the dependence on
  $\Delta_\pm =|\bfU_+{-}\bfU_-|$ and $\bfA$ (via $\ALPHA$, $\BETA$, and
  $\delta$) will be given explicitly.

We first treat the nondegenerate case $\ALPHA>0$. There we obtain a suitable
maximally strictly monotone operator $\calA$ that provides existence and
uniqueness of solutions. The case $\ALPHA=0$ is treated by regularizing $\bfA$
to $\bfA_\eps(\bfu)=\bfA(\bfu)+ \eps\bfu$, which gives $\ALPHA{}_\eps=\eps>0$
and solutions $\bfU_\eps$. Using $\eps$-independent a priori estimates for
$\| \bfU_\eps - \wt\bfu_\pm\|_{\rmL^2}$ we obtain a weak limit $\bfU$ which is
the desired profile. \bigskip

\STEP{Step 1. Preparations:} 
We proceed as is \cite[Thm.\,3.1]{GalMie98DMSS} and search for $\bfU$ in the
form $\bfU(y)= \wt\bfu_\pm(y)+ \bfv'(y)$ with $\bfv\in \rmH^1(\R;\R^m)$.
Inserting the ansatz for $\bfu$ into the stationarity equation, we obtain (in
$\rmH^{-2}(\R;\R^m)$) the relation 
\[
0= \bfA(\wt\bfu_\pm{+}\bfv')'' + \big(\frac y2(\wt\bfu_\pm{+}\bfv')\big)' - \frac
12(\wt\bfu_\pm{+}\bfv'). 
\]
This equation can be integrated with respect to $y$ yielding the
relation (in $\rmH^{-1}(\R;\R^m)$)
\begin{equation}
  \label{eq:IntegrStatEqn}
  0= \bfA(\wt\bfu_\pm{+}\bfv')' + \frac y2\,\bfv'  -\frac12 \bfv +
  \bfg, \quad \text{where } \bfg(y)=\int_{-\infty}^y \frac\eta2
  \wt\bfu'_\pm(\eta)\dd \eta. 
\end{equation}
By construction, we have $\bfg \in \rmC^\infty_\rmc(\R;\R^m)$ and $\bfg(y)=0$ for
$|y|\geq 1/\sqrt{\BETA}$ (use that $\wt\bfu'_\pm$ is even, see
\eqref{eq:Cond.wt.bfu*}). Moreover, by the scaling of $\wt\bfu_\pm$ via $\BETA$
one obtains 
\begin{equation}
  \label{eq:bfg.BETA}
  \| \bfg\|_{\rmL^2(\R)} \leq C_\chi \BETA^{3/4}.
\end{equation}

\STEP{Step 2. Monotone operator theory for $\ALPHA>0$:} We set
$\bfH:= \rmH^1(\R;\R^m)$, which gives $\bfH^*=$ $\rmH^{-1}(\R;\R^m)$, and
define the monotone operator $\calA: \dom(\calA) \subset \bfH\to \bfH^*$ via
\[
\dom(\calA):=\bigset{\bfv\in \rmH^1(\R;\R^m)}{ y\bfv'(y) \in \bfH^*} 
\text{ and }  \calA(\bfv):= - \big(\bfA(\wt\bfu_\pm{+}\bfv')\big)' - \frac y2
\bfv' + \frac12 \bfv. 
\]
Based on the assumptions \eqref{eq:Cond.bfA} and slightly generalizing the
results in \cite[Thm.\,3.1]{GalMie98DMSS}, we obtain that $\calA$ is a maximal
monotone operator which is strongly monotone, namely
\begin{equation}
  \label{eq:calA.monotone}
  \forall\,\bfv_1,\bfv_2\in \dom(\calA):\quad \langle
\calA(\bfv_1){-}\calA(\bfv_2),\bfv_1{-} \bfv_2 \rangle_\bfH \geq 
\int_\R \Big(\ALPHA|\bfv'_1{-}\bfv'_2|^2+ \frac12|\bfv_1{-}\bfv_2|^2 \Big)\dd y,
\end{equation}
and hence also coercive because of $\ALPHA>0$. Thus, \eqref{eq:IntegrStatEqn},
which now takes the form $\calA(\bfv)=\bfg$, has exactly one solution
$\bfv\in \bfH=\rmH^1(\R;\R^m)$ such that the unique solution
$\bfU=\wt\bfu_\pm+\bfv'$ is constructed.

For the reader's convenience and for checking that the vector-valued case works
exactly the same way, we repeat the argument. We first observe that
$\bfv \mapsto \calA_1(\bfv):=-\big( \bfA(\wt\bfu_\pm{+}\bfv')\big)'+\frac12
\bfv$ is monotone and continuous from $\bfH$ to $\bfH^*$, hence $\calA_1$ is a
maximally monotone operator, cf.\ \cite[Prop.\,32.7, p.\,854]{Zeid90NFA2b}.
Next, we consider the linear operator
$\bfv \mapsto \calA_2(\bfv):= \frac y2 \bfv'$ with
$\dom(\calA_2)=\dom(\calA)$. Hence, $\calA_2$ is maximally monotone by
\cite[Thm.\,32.L, p.\,897]{Zeid90NFA2b}. (A linear operator $L$ is maximally
monotone if and only if $L$ and $L^*$ are monotone and $L$ has a closed graph.)
With this we conclude that $\calA=\calA_1+\calA_2$ is maximally monotone by
\cite[Thm.\,32.I, p.\,897]{Zeid90NFA2b}, as both are maximally monotone and
$\dom(\calA_2)\cap \mafo{int}\big(\dom(\calA_1)\big) = \dom(\calA)\cap \bfH =
\dom(\calA) \neq \emptyset$.  Finally, we use \eqref{eq:calA.monotone} with
$\bfv_2=0$ and conclude that $\calA$ is strongly coercive, i.e.\
$\langle \calA(v),v\rangle/\| v\|_\bfH \to \infty $ for
$\| v\|_\bfH \to \infty$. Then, \cite[Cor.\,32.35, p.\,887]{Zeid90NFA2b}
implies that $\calA$ is surjective.\smallskip

\STEP{Step 3. A priori estimates for $\ALPHA>0$:} The first a priori estimate
is obtained by 
testing \eqref{eq:IntegrStatEqn} with $\bfv$ itself and using the monotonicity
of $\bfA$. Recalling $\Delta_\pm:= |\bfU_+{-}\bfU_-| $ and employing
\eqref{eq:Cond.bfA.b} we have  
\begin{align}
\nonumber
&\ALPHA\|\bfv'\|_{\rmL^2}^2{+}\frac34\|\bfv\|_{\rmL^2}^2  
\leq  \int_\R\Big( \big(\bfA(\wt\bfu_\pm{+}\bfv'){-}\bfA(\wt\bfu_\pm) \big) 
  \vdot \bfv' {+}\frac34|\bfv|^2 \Big) \dd y
\\
\nonumber
& =\int_\R\Big( \big(\frac y2\bfv'-\frac12\bfv +\bfg\big)\vdot \bfv
   -\bfA(\wt\bfu_\pm)\vdot \bfv' + \frac34|\bfv|^2\Big) \dd y 
=\int_\R \big(\bfg\vdot \bfv -\bfA(\wt\bfu_\pm)'\vdot \bfv \big)\dd y 
\\
\label{eq:APrioriL2}
& \leq 
\big(\|\bfg\|_{\rmL^2} + \|\rmD\bfA(\wt\bfu_\pm)\wt\bfu'_\pm \|_{\rmL^2}\big) \|
\bfv\|_{\rmL^2} \leq   C_\chi \BETA^{3/4} \Delta_\pm \|\bfv\|_{\rmL^2} ,   
\end{align}
where the last estimate used \eqref{eq:wtu.pri.BETA} and
\eqref{eq:bfg.BETA}. 

A second a priori estimate for $\ALPHA>0$ uses the monotonicity which implies
$\rmD \bfA(\bfu)\bfw\vdot \bfw \geq \ALPHA |\bfw|^2$ such that $\ALPHA>0$ gives
the invertibility of $\rmD\bfA(\bfu) \in \R^{m\ti m}$. Thus,
\eqref{eq:IntegrStatEqn} implies that $\bfU=\wt\bfu_\pm{+}\bfv'$ lies in
$\rmH^1_\mafo{loc} (\R;\R^m)$ and satisfies the strong profile equation
$\big( \rmD\bfA(\bfU)\bfU'\big)' + \frac y2 \bfU'=0$ in $\bfH^*$. In
particular, we can test this equation with $\bfU{-}\wt\bfu_\pm=\bfv'$ giving
\[
\int_\R \rmD\bfA(\bfU)\bfU' \cdot \bfU'\dd y = \int_\R\big(
\bfA(\bfU)'\cdot \wt\bfu'_\pm + \frac y2\bfU'\cdot(\bfU{-}\wt\bfu_\pm)\big) \dd y.
\] 
With this, \eqref{eq:Cond.bfA.b}, and suitable integrations by part  we obtain
\begin{align*}
&\ALPHA\|\bfU'\|_{\rmL^2}^2 + \frac14 \| \bfU{-}\wt\bfu_\pm\|_{\rmL^2}^2
\leq \int_\R\Big(\rmD \bfA(\bfU)\bfU'\cdot \bfU' + \frac14 |
\bfU{-}\wt\bfu_\pm|^2 \Big) \dd y 
\\
&= \int_\R\Big(  \bfA(\bfU)'\vdot \wt\bfu'_\pm  + \frac y2
\bfU'\vdot(\bfU{-}\wt\bfu_\pm) + \frac14 | \bfU{-}\wt\bfu_\pm|^2 \Big) \dd y  
\\
&= \int_\R\Big( \bfA(\wt\bfu_\pm)'\vdot \wt\bfu'_\pm  +\big(
\bfA(\wt\bfu_\pm){-}\bfA(\bfU)\big)\vdot \wt\bfu''_\pm  + \frac y2 \wt\bfu'_\pm  \vdot 
(\bfU{-}\wt\bfu_\pm)  \Big) \dd y
\\
& \leq \BETA \|\wt\bfu'_\pm \|_{\rmL^2}^2 + \BETA \| \bfU{-}\wt\bfu_\pm\|_{\rmL^2}
\|\wt\bfu''_\pm \|_{\rmL^2} +   \|\frac y2\wt\bfu'_\pm \|_{\rmL^2}\|
\bfU{-}\wt\bfu_\pm\|_{\rmL^2} 
\\
&
\leq   C_\chi\big( \BETA^{1/2} \Delta_\pm^2 +(\BETA\BETA^{-3/4} +
\BETA^{1/4})\Delta_\pm  \| \bfU{-}\wt\bfu_\pm\|_{\rmL^2} \big). 
\end{align*} 
Together with \eqref{eq:APrioriL2}  we have established the a priori estimate
\eqref{eq:APrioriEst}.\medskip

\STEP{Step 4. Exponential convergence for $\ALPHA,\delta>0$:} Using $\ALPHA>0$
we have shown that the unique solution $\bfU =\wt\bfu_\pm+\bfv'$ has the
regularity $\bfv\in \rmH^2(\R;\R^m)$. Thus, equation \eqref{eq:IntegrStatEqn}
shows that the flux
$\bfq : y \mapsto \bfA(\bfU(y))'=\rmD\bfA(\bfU(y))\bfU'(y))$ lies in
$\rmH^1_\mafo{loc}(\R;\R^m)$. Because of $\ALPHA>0$ the Jacobian
$\rmD\bfA(\bfu)\in \R^{m\ti m}$ is invertible, which shows that $\bfq$
satisfies $\bfq'+ \frac y2 \rmD\bfA(\bfU)^{-1}\bfq=0$. Thus, for $y\geq 0$ we
find
\[
\frac \rmd{\rmd y} \,| \bfq|^2 = - y \,\bfq \,\vdot\,
\rmD\bfA(\bfU)^{-1} \bfq \leq - y \delta \,| \bfq|^2,
\]
where we used \eqref{eq:Cond.bfA.c} with $\bfw=\rmD \bfA(\bfU)^{-1}
\bfq =  \bfU'$. Arguing similarly for $y\leq 0$ we arrive at 
$|\bfq(y)| \leq \ee^{-\delta y^2/4}\,|\bfq(0)|$ for $y \in \R$.
Evaluating \eqref{eq:IntegrStatEqn} at $y=0$ gives $\bfq(0)=\frac12 \bfv(0)-
\bfg(0)$. Using $\bfv(0)=  \int_{-\infty}^0 \ee^{y/\sqrt{\BETA}}\big( \bfv'(y)+
\bfv(y)/\sqrt{\BETA} \big) \dd y $ together with \eqref{eq:APrioriEst} and the
scaling properties of $\bfg$ yields  $|\bfq(0)| \leq
C_\chi \BETA^{1/2}\Delta_\pm $. Hence, the flux estimate 
\eqref{eq:FluxEst} is established. 

Having $\bfq$ under control, we return to the main equation
\eqref{eq:IntegrStatEqn} taking now the form $\bfq+\frac y2 \bfv' - \frac12
\bfv + \bfg=0$ and find the explicit representation in terms of $\bfq$:
\begin{equation}
  \label{eq:bfv.by.bfq}
  \bfv(y)=\begin{cases} 
   y \int_y^\infty \bfh(\eta) \dd \eta& \text{ for }y>0,\\
   2\bfg(0){+}2\bfq(0)       & \text{for } y=0,\\
   -y \int_{-\infty}^y\bfh(\eta) \dd \eta& \text{ for }y <0,
  \end{cases} \quad 
  \text{where } \bfh(\eta) = \frac2{\eta^2}\big(\bfg(\eta){+}\bfq(\eta)\big).
\end{equation}
By construction $\bfg$ has support in $[-\sqrt{\BETA}, \sqrt{\BETA}]$ and
satisfies $\|\bfg\|_{\rmL^\infty}\leq C_\chi \BETA^{1/2} \Delta_\pm$. Hence, we
obtain $|\bfg(y)| \leq C_\chi\BETA^{1/2} \Delta_\pm \ee^{-y^2/\BETA}
$. Setting $\gamma=\min\{1/\BETA,\delta/4\}>0$ and recalling \eqref{eq:FluxEst}
we find $|\bfh(y)|\leq C_\chi \BETA^{1/2} \Delta_\pm \ee^{-\gamma y^2 }/y^2$
and conclude
\[
|\bfv(y)| \leq  C_\chi \BETA^{1/2} \Delta_\pm\,\Phi\big(\sqrt{\gamma}\,y\big ) \quad
\text{with }\Phi(z) := z \int_z^\infty \frac{2\ee^{-r^2}} {r^2} \dd
r \leq 2 \ee^{-z^2}. 
\]  
Note that $\Phi$ has a continuous extension at $z=0$ with $\Phi(0)\leq 2$, 
such that we also have a uniform bound for $\bfv$ in the case $\delta=0$. 

With $y\bfv'(y)= \bfv (y) - 2\bfq(y)- 2\bfg(y)$ we obtain the pointwise a priori estimate
\begin{equation}
  \label{eq:Decay.bfv.bfv'}
  |\bfv(y)|+ |y\bfv'(y)| \leq C_\chi \BETA^{1/2} \Delta_\pm \ee^{-\gamma y^2} 
  \quad \text{for }y \in \R.\medskip
\end{equation}

\STEP{Step 5. The degenerate case with $\ALPHA=0$:} We study the auxiliary
problem where $\bfA$ is replaced by
$\bfA_\eps:\bfu \mapsto \bfA(\bfu)+ \eps \bfu$ for $\eps \in {]0,1[}$. Then,
$\bfA_\eps$ satisfies the assumptions \eqref{eq:Cond.bfA} with
$\BETA{}_\eps = \BETA + \eps$, $\ALPHA{}_\eps= \eps>0$, and
$\delta_\eps= \delta/(1{+}\delta\eps)$. To see the latter, we set
$B=\rmD\bfA(\bfu)$ and $B_\eps=B{+}\eps I$ and observe
\begin{align*}
\delta_\eps |B_\eps\bfw|^2 &\leq  \frac{\delta}{1{+}\delta\eps} 
\Big( |B\bfw|^2 {+} 2\eps |\bfw| |B\bfw| {+} \eps^2|\bfw|^2\Big) 
\leq \frac{\delta}{1{+}\delta\eps} \Big( (1{+}\delta \eps) |B\bfw|^2 {+}
(\eps^2{+}\frac{\eps^2}{\delta \eps} \big)|\bfw|^2\Big) 
\\
&=\delta |B\bfw|^2 + \eps|\bfw|^2
\  \overset{\text{\eqref{eq:Cond.bfA.c}}}\leq \
\bfw\cdot B\bfw + \eps|\bfw|^2  = \bfw \vdot B_\eps \bfw,
\end{align*}
which is the desired replacement of \eqref{eq:Cond.bfA.c} for $\eps>0$.

By the previous steps, there are unique solutions
$\bfU_\eps =\wt\bfu_\pm+\bfv'_\eps$, where 
\eqref{eq:APrioriEst} provides a uniform bound for
$\bfv_\eps$ in $\rmH^1(\R;\R^m)$.  
Hence, after extracting a subsequence (not relabeled) we may assume 
\begin{equation}
  \label{eq:veps.StrongCvg}
  \bfv_\eps \rightharpoonup \bfv_0 \text{ in } \bfH=\rmH^1(\R;\R^m) \quad
\text{and} \quad  \bfv_\eps \to \bfv \text{ in } \rmL^2(\R;\R^m) . 
\end{equation}
For the strong convergence, we employ the uniform decay estimate
\eqref{eq:Decay.bfv.bfv'}, where the decay factor $\gamma_\eps= \min\{1/\BETA{}_\eps,
\delta_\eps/4\}$ is uniformly bounded away from $0$.
  
By the global Lipschitz continuity of $\bfA$ we also have boundedness of
$\bfa_\eps= \bfA(\wt\bfu_\pm{+}\bfv'_\eps)- \bfb$ with $\bfb(y):=
\bfA(\wt\bfu_\pm(y))$ and may assume
\[
\bfa_\eps \rightharpoonup \bfa_0 \text{ in } \rmL^2(\R;\R^m). 
\]
Clearly, for $\eps>0$ the function $\bfv_\eps$ solves \eqref{eq:IntegrStatEqn}
if and only if 
\begin{equation}
  \label{eq:aeps.veps}
  0= \bfa'_\eps +\bfb' + \frac y2 \bfv'_\eps - \frac12\bfv_\eps + \bfg \quad 
  \text{in }\bfH^*=\rmH^{-1}(\R;\R^m). 
\end{equation}
Using the weak convergences of $\bfv_\eps$ in $\rmH^1$ and $\bfa_\eps$ in
$\rmL^2$, we see that this relation holds also for $\eps=0$. To show that
$\bfv_0$ solves \eqref{eq:IntegrStatEqn}, or equivalently that the profile
$\bfU=\wt\bfu_\pm+\bfv'_0$ is a solution of \eqref{eq:WeakStatProf}, 
it remains to show that $\bfa_0(y) = \bfA(\wt\bfu_\pm(y){+}\bfv'_0(y)) - \bfb(y)$
a.e.\ on $\R$. 

By the monotonicity of
$\calB:\rmL^2(\R;\R^m)\to \rmL^2(\R;\R^m);\ \bfw \mapsto \bfA(\wt\bfu_\pm{+}\bfw){-}
\bfb$ and Minty's monotonicity trick (see e.g.\ \cite[Ch.\,25(4),
p.\,474]{Zeid90NFA2b}), it suffices to shows that
$ \int_\R \bfa_\eps \vdot \bfv'_\eps \dd y \to \int_\R \bfa_0 \vdot
\bfv'_0 \dd y$ for ${\eps\to 0^+}$.  For this, we can exploit
\eqref{eq:aeps.veps} as follows:
\begin{align*}
\int_\R \!\!\bfa_\eps {\vdot} \bfv'_\eps \dd y
&= \int_\R \!\!{-}\bfa'_\eps \vdot \bfv_\eps \dd y
 \ \overset{\text{\eqref{eq:aeps.veps}}}= \ 
 \int_\R \!\!\big(\bfb' {+} \frac y2 \bfv'_\eps 
    {-} \frac12\bfv_\eps {+} \bfg  \big){\vdot} \bfv_\eps \dd y 
= \int_\R\!\!\big( \bfb'{\vdot} \bfv_\eps {-} \frac34|\bfv_\eps|^2 
   {+} \bfg {\vdot} \bfv_\eps\big) \dd y 
\\
&\to 
 \int_\R \!\!\big( \bfb'{\vdot} \bfv_0 {-}\frac34|\bfv_0|^2 {+} 
\bfg {\vdot} \bfv_0\big) \dd y 
 \ \overset{\text{\eqref{eq:aeps.veps}}}= \ 
-\int_\R \bfa'_0 \vdot \bfv_0 \dd y\ = \ \int_\R \bfa_0 \vdot \bfv'_0 \dd y,
\end{align*}
where ``$\to$'' uses the strong convergence \eqref{eq:veps.StrongCvg}.
Thus, Minty's trick gives $\bfa_0=\calB(\bfv'_0)=\bfA(\wt\bfu_\pm{+}\bfv'_0)
-\bfb$ and \eqref{eq:IntegrStatEqn} and \eqref{eq:WeakStatProf} are
established. 

The uniqueness of $\bfv_0$ again follows by strict monotonicity, see
\eqref{eq:calA.monotone} with $\ALPHA=0$. 

\STEP{Step 6. Two relations:} Using the fast decay of $\bfU-\wt\bfu_\pm$
arising from $\delta>0$ we can evaluate the indefinite integrals as follows:
\begin{align*}
0&= 2\bfA(\bfU(y))'\big|_{-\infty}^\infty= \int_\R 2(\bfA\circ \bfU)'' \dd y = -
\int_{-\infty}^0 y \;\!\bfU'\dd y + \int_0^\infty y \;\! \bfU'\dd y \\
&= \big[y(\bfU{-}\bfU_-)\big]_{-\infty}^0-\int_{-\infty}^0 \!(\bfU{-}\bfU_-)\dd y 
   + \big[y(\bfU{-}\bfU_+)\big]_0^\infty -\int_0^\infty \!(\bfU{-}\bfU_+)\dd y 
\\
& = -\int_\R \big( \bfU(y) - \ol\bfu_\pm(y)\big) \dd y,  
\end{align*}
which is the first relation in \eqref{eq:Rel.U.olUpm}.  Similarly, we obtain 
\begin{align*}
&\bfA(\bfU_+)-\bfA(\bfU_-)= \int_\R (\bfA\circ\bfU)' \dd y =  \\
&= \big[y(\bfA{\circ}\bfU)'\big]_{-\infty}^0\! 
   -\int_{-\infty}^0 \!y(\bfA{\circ}\bfU)''\dd y 
   + \big[y(\bfA{\circ}\bfU)'\big]_0^\infty 
     -\int_0^\infty \!\! y(\bfA{\circ}\bfU)''\dd y 
 = \int_\R\!\frac{y^2}2 \bfU'(y) \dd y 
\\
&= \big[\frac{y^2}2(\bfU{-}\bfU_-)\big]_{-\infty}^0 
     -\int_{-\infty}^0 \!y(\bfU{-}\bfU_-)\dd y 
   + \big[\frac{y^2}2(\bfU{-}\bfU_+)\big]_0^\infty 
      -\int_0^\infty \!y(\bfU{-}\bfU_+)\dd y 
\\
& = -\int_\R y \big( \bfU(y) - \ol\bfu_\pm(y)\big) \dd y,   
\end{align*}
which is the second relation in \eqref{eq:Rel.U.olUpm}.

\STEP{Step 7. Further regularity:} We know $\bfv \in \rmH^1:=\rmH^1(\R,\R^m)$, which
implies $\bfU\in \rmL^2_\mafo{loc}$. From the weak equation
\eqref{eq:WeakStatProf.b} we conclude that $\bfH:y \mapsto \bfA(\bfU(y))$
lies in $\rmH^1_\mafo{loc}$ by applying the Lemma of du Bois-Reymond. Thus, the
invertibility of $\rmD\bfA(\bfU)$ allows to apply the implicit function theorem
giving $\bfU \in \rmH^1_\mafo{loc}$. 

If $\bfA$ satisfies the further smoothness \eqref{eq:bfA.smooth}, then we
obtain higher regularity of $\bfU$ by the classical bootstrap argument applied
to the equation $\big(\rmD\bfA(\bfU)\bfU'\big)' = - \frac y2 \bfU'$.
\end{proof}

It is interesting to compare the approximation $\bfA_\eps(\bfu)=\bfA(\bfu)+\eps
\bfu$ in Step 5 of this proof with the linear approximation $\bbA_\eps =
\bbA+\eps \bbI$ in Example \ref{ex:LinVect}, where the solutions are given
explicitly. Hence one can see that the approximation is needed for smoothness
and exponential decay of the flux. 

While \eqref{eq:APrioriEst} provides an a priori estimate for $\bfU-\wt \bfu_\pm$
in $\rmL^2(\R;\R^m)$, we now show that in the case $\ALPHA>0$ one can also
obtain a uniform bound, which will be useful in Section \ref{su:OneReact3Spec}.  

\begin{corollary}[Uniform bound on $\bfU-\wt\bfu_\pm$] 
\label{co:UniformBdd} Assume the conditions \eqref{eq:Cond.bfA} with
$\delta, \ALPHA>0$, then the unique solution $\bfU:\R\to \R^m$ obtained in Theorem
\ref{th:VectValProf} satisfies
\begin{equation}
  \label{eq:UnifBdd.bfU}
  \big| \bfU(y) - \wt\bfu_\pm(y) \big| \leq C_\chi\, 
  \frac{\BETA^{1/2}}{\delta^{1/2} \ALPHA}\, \Delta_\pm \  \text{ for all } y
  \in \R, \quad \text{ where }   \Delta_\pm=|\bfU_+ {-} \bfU_-|. 
\end{equation}
\end{corollary}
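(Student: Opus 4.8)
The plan is to bootstrap from the already-established flux estimate \eqref{eq:FluxEst} and the representation \eqref{eq:bfv.by.bfq}. Recall that in the case $\ALPHA>0$ the profile satisfies the strong equation $\bfq'+\frac y2\rmD\bfA(\bfU)^{-1}\bfq=0$ with $\bfq=\rmD\bfA(\bfU)\bfU'$, so that $\bfU'=\rmD\bfA(\bfU)^{-1}\bfq$. The monotonicity condition \eqref{eq:Cond.bfA.b} gives $\rmD\bfA(\bfu)\bfw\cdot\bfw\geq\ALPHA|\bfw|^2$, which yields the operator-norm bound $|\rmD\bfA(\bfu)^{-1}\bfz|\leq |\bfz|/\ALPHA$ for all $\bfz\in\R^m$: indeed, writing $\bfw=\rmD\bfA(\bfu)^{-1}\bfz$ we get $\ALPHA|\bfw|^2\leq \bfw\cdot\bfz\leq|\bfw|\,|\bfz|$. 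Combining this with \eqref{eq:FluxEst} immediately gives the pointwise derivative bound
\[
  |\bfU'(y)| = \big|\rmD\bfA(\bfU(y))^{-1}\bfq(y)\big| \leq \frac1\ALPHA\,\ee^{-\delta y^2/4}\,C_\chi\,\BETA^{1/2}\,\Delta_\pm
  \quad\text{for all }y\in\R.
\]

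Next I would integrate this derivative bound to control $\bfU-\wt\bfu_\pm$ itself. Since $\bfU-\wt\bfu_\pm=\bfv'$ with $\bfv\in\rmH^1$ (indeed $\rmH^2$ here) and $\bfv$ decays at $\pm\infty$ by \eqref{eq:Decay.bfv.bfv'}, we also know $\bfv'(y)=\bfU(y)-\wt\bfu_\pm(y)\to 0$ as $|y|\to\infty$. Hence for $y\geq 0$ we can write $\bfU(y)-\wt\bfu_\pm(y)=-\int_y^\infty(\bfU-\wt\bfu_\pm)'(\eta)\,\dd\eta$; but $(\bfU-\wt\bfu_\pm)'=\bfU'-\wt\bfu_\pm'$, and $\wt\bfu_\pm'$ is supported in $[-1/\sqrt\BETA,1/\sqrt\BETA]$ with $\|\wt\bfu_\pm'\|_{\rmL^\infty}\leq C_\chi\BETA^{1/2}\Delta_\pm$, while $\int_y^\infty|\bfU'(\eta)|\,\dd\eta\leq \frac{C_\chi\BETA^{1/2}\Delta_\pm}{\ALPHA}\int_0^\infty\ee^{-\delta\eta^2/4}\,\dd\eta\leq \frac{C_\chi\BETA^{1/2}\Delta_\pm}{\ALPHA\,\delta^{1/2}}$. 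The same argument applies for $y\leq 0$ by integrating from $-\infty$. Adding the contribution from $\wt\bfu_\pm'$, whose total integral is bounded by $C_\chi\BETA^{1/2}\Delta_\pm\cdot\BETA^{-1/2}=C_\chi\Delta_\pm$, gives an estimate of the form $|\bfU(y)-\wt\bfu_\pm(y)|\leq C_\chi\big(\frac{\BETA^{1/2}}{\delta^{1/2}\ALPHA}+1\big)\Delta_\pm$.

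To match the clean form \eqref{eq:UnifBdd.bfU}, I would absorb the stray ``$+1$'' by noting that assumptions \eqref{eq:Cond.bfA.a}--\eqref{eq:Cond.bfA.b} force $\ALPHA\leq\BETA$ (take $\bfw$ in the direction realizing the Lipschitz bound) and that condition \eqref{eq:Cond.bfA.c} together with $|\rmD\bfA(\bfu)\bfv|\leq\BETA|\bfv|$ forces $\delta\leq 1/\ALPHA$ is not quite right — rather, the remark after the theorem notes $\delta=\ALPHA/\BETA^2$ is always admissible, so one may assume $\delta\geq\ALPHA/\BETA^2$, giving $\delta^{1/2}\geq\ALPHA^{1/2}/\BETA$ and hence $1\leq\BETA/(\delta^{1/2}\ALPHA^{1/2})\leq\BETA^{1/2}/(\delta^{1/2}\ALPHA)\cdot\BETA^{1/2}/\ALPHA^{1/2}\cdot$\dots; a cleaner route is simply to observe $\BETA^{1/2}/(\delta^{1/2}\ALPHA)\geq \BETA^{1/2}\cdot(\BETA/\ALPHA^{1/2})\cdot\ALPHA^{-1}= \BETA^{3/2}\ALPHA^{-3/2}\geq 1$ using $\delta\leq\BETA^2/\ALPHA$ (from $\delta|\rmD\bfA\,\bfv|^2\leq\bfv\cdot\rmD\bfA\,\bfv\leq\BETA|\bfv|^2$ and $|\rmD\bfA\,\bfv|\geq\ALPHA|\bfv|$, i.e.\ $\delta\ALPHA^2\leq\BETA$). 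Thus the prefactor $\BETA^{1/2}/(\delta^{1/2}\ALPHA)$ already dominates $1$, and the two terms consolidate into the stated bound.

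The main obstacle is purely the bookkeeping of constants: there is no analytic difficulty since everything reduces to the operator bound $\|\rmD\bfA(\bfu)^{-1}\|\leq1/\ALPHA$ plus a Gaussian integral. The one point requiring a little care is justifying that $\bfv'(y)=\bfU(y)-\wt\bfu_\pm(y)\to 0$ so that the fundamental theorem of calculus applies on the half-lines — this follows from $\bfv\in\rmH^2(\R;\R^m)$ (established in Step 4 for $\ALPHA>0$, using that the equation upgrades $\bfv\in\rmH^1$ to $\bfv\in\rmH^2$), since $\rmH^2(\R)$ embeds into $\rmC^1_0(\R)$. Everything else is routine.
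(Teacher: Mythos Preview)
Your argument is correct and follows the same route as the paper: bound $|\bfU'(y)|$ via $\|\rmD\bfA(\bfu)^{-1}\|\leq 1/\ALPHA$ together with the flux estimate \eqref{eq:FluxEst}, then integrate the Gaussian from $\pm\infty$. The paper streamlines the integration step by writing $\bfU(y)-\ol\bfu_\pm(y)=\int_{-\infty}^y\bfU'\,\dd z$ (resp.\ $-\int_y^\infty$) and only afterwards adding $|\wt\bfu_\pm-\ol\bfu_\pm|\leq\Delta_\pm$, which avoids handling $\wt\bfu_\pm'$ separately; your final absorption of the stray $\Delta_\pm$ via $\delta\ALPHA^2\leq\BETA$ is exactly what the paper leaves implicit.
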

\begin{proof} We set $\bbA(y)=\rmD \bfA(\bfU (y))$ and observe that
  \eqref{eq:Cond.bfA.b} implies $\langle \bbA \bfv, \bfv\rangle \geq \ALPHA
  |\bfv|^2$. Inserting $\bfv= \bbA^{-1} \bfw$ we obtain $|\bbA^{-1} \bfw| \leq
  |\bfw|/\ALPHA$. Now exploiting the flux estimate \eqref{eq:FluxEst} yields
\[
|\bfU'(y)| = |\rmD\bfA(U(y))^{-1} \bfq(y)| \leq \frac1{\ALPHA} C_\chi
\BETA^{1/2} \,\ee^{-\delta y^2/4}\, \Delta_\pm \quad \text{for all } y \in \R.  
\]
Using $\bfU(y)-\ol\bfu_\pm(y)= \int_{-\infty}^y \bfU'(z) \dd z$ for $y < 0$
and $\bfU(y)- \ol\bfu_\pm(y)= -\int_y^\infty \bfU'(z) \dd z$ for $y> 0$, we obtain
the desired estimate \eqref{eq:UnifBdd.bfU} if we take into account $|
\wt\bfu_\pm (y) - \ol\bfu_\pm(y)| \leq \Delta_\pm$. 
\end{proof}

We conclude this section on existence and uniqueness of similarity profiles $\bfU$
solving the weak form \eqref{eq:WeakStatProf} of the profile equation
\eqref{eq:I.ProfileEqn} with the important remark, that our result provides
existence also in the degenerate case with $\ALPHA=0$. While for $\ALPHA>0$ the
solutions are automatically smooth, see the statement after
\eqref{eq:bfA.smooth} in Theorem \ref{th:VectValProf}, the case allows for
discontinuous solutions $\bfU$ or for continuous solutions where $\bfU'$ has
singularities. The latter case will be important in the scalar situation
discussed in the following section.

\section{The case of scalar profiles}
\label{se:ScalProfiles}

We now restrict to the scalar case and consider the problem
\begin{equation}
  \label{eq:ScalSeSimEq}
  \big( \bbD(U)U'\big)' + \frac y2 \,U'=0 \quad \text{ for }y\in \R,\qquad 
U(\pm\infty) = U_\pm,
\end{equation}
where we always assume that $\bbD:\R\to \R$  is continuous and nonnegative. 
We observe that $A(u)=\int_0^u \bbD(s)\dd s$ is monotone and $\rmC^1$ with
$A'(u)=\bbD(u)$.  We have strict monotonicity if $\bbD$ has only isolated
zeros. The global conditions \eqref{eq:Cond.bfA} are satisfied with 
\[
\BETA =\sup_{u\in \R} \bbD(u), \quad \ALPHA= \inf_{u\in \R} \bbD(u), \quad 
\delta= 1/\BETA = \inf_{u\in \R} \frac1{\bbD(u)}. 
\] 
However, below we will show that all stationary profiles $U:\R\to \R$ are
monotone, e.g.\ for $U_-<U_+$ the profile is nondecreasing and satisfies
$U(y)\in [U_-,U_+]$ for all $y\in \R$. Hence, it will be sufficient to restrict
the above infima and supremum to the interval $[U_-,U_+]$.

Before giving the general existence theory, we look at a few examples with
degenerate diffusion, that is $\ALPHA=0$. 
 
\begin{example}
[Degenerate profiles]
\label{ex:DegenProfiles}\mbox{ } 

(I) As a first simple example we have 
\begin{equation}
  \label{eq:Exam.D.degen}
\bbD(u)=(1{-}u^2)/4 \quad \text{and} \quad u(y)= \begin{cases} 
U_-=-1 & \text{for }y\leq -1, \\ y &\text{for }y \in [-1,1],\\ 
U_+=1&\text{for }y\geq 1. \end{cases}
\end{equation}
(II) A second example can be constructed by setting 
\[
\wt u(y)= \begin{cases} U_-=-1& \text{for } y\leq -1,\\
\frac32 y - \frac12 y^3 &\text{for } y\in [-1,1],\\ 
U_+=+1 &\text{for } y\geq 1.  \end{cases}
\]
By exploiting equation \eqref{eq:ScalSeSimEq} for $y\in {]{-}1,1[}$ we 
obtain 
\[
\wt \bbD(u) = \frac18 \big( 1- \wt Y(u)^2\big), \quad \text{where } \wt Y:=
\big(\wt u|_{[-1,1]}\big)^{-1} :[-1,1]\to [-1,1].
\]
We observe that $\wt \bbD(u)=c |u{\mp}1|^{1/2}+ \mafo{h.o.t.}$ for $1 \mp u
\to 0^+$. 
\\[0.3em]
(III) As a third example we set $\wh Y:[-1,1]\to [-1,1]; u \mapsto \frac32 u -\frac12
u^3 $ and define 
\[
\wh u(y) = \begin{cases} U_-=-1& \text{for }y\leq -1,\\ 
\big(\wh Y|_{[-1,1]}\big)^{-1} (y)&\text{for }y\in [-1,1],\\ 
U_+=1& \text{for }y \geq 1. 
\end{cases}
\] 
A direct calculation shows that \eqref{eq:ScalSeSimEq} is satisfied for
$\bbD(u)= \frac3{16}(1{-}u^2)^2(5{-}u^2)$.
\\[0.3em]
(IV) More generally, using advanced ODE techniques, one can show that for
$\bbD$ satisfying $\bbD(u)=d_0\,(u{-}U_-)^\theta$ with $\theta>0$ that there
exists $y_-^*<0$ such that the profile $U$ satisfies $U(y)=U_- $ for
$y\leq y_-^*$ and $U(y)=c_0 \,(y{-}y_-^*)^{1/\theta}+ \mafo{h.o.t.}$ \ Indeed,
the three cases (I) to (III) above correspond to $\theta =1$, $1/2$, and $2$,
respectively. 

Moreover, for $\theta>1$ this shows that $U'$ can only lie in
$\rmL^p_\mafo{loc}(\R)$ for $p< \theta/(\theta{-}1)$, which is exactly the
restriction in Theorem \ref{th:SeSimiDegerate} below.
\\[0.3em]
(V) However, if $\bbD$ has an interior zero $U_0\in {]U_-,U_+[}$ of the form
$\bbD(u)=d_0 |u{-}U_0|^\theta+ \mafo{h.o.t.}$, then the profile $U$ will behave
like $U(y)=U_0 + c(y{-}y_0)^{1/(1{+}\theta)}+\mafo{h.o.t.}$ \ Thus, we find
$U'(y)\sim |y{-}y_0|^{-\theta/(1+\theta)}+ \mafo{ h.o.t.}$, which is a stronger
singularity than those that would occur near $U_\pm$.

The difference between the singularities at the boundaries $U_0\in \{U_-,U_+\}$
and in the interior $U_0 \in {]U_-,U_+[}$ is 
explained as follows: Interior singularities occur at positive continuous flux
$0< q(y_0)$ with $q(y)= \bbD(U(y)) U'(Y) \approx q(y_0)$, while at the two end
points singularities occur at flux $q(U_\pm)=0$.
\end{example}

To derive our subsequent a priori estimates we use the strategy implemented in
Step 5 of
the proof for Theorem \ref{th:VectValProf}. We will derive the estimates for
the case that $\bbD$ is bounded from below by $\ALPHA>0$ and then we conclude
that the same result holds for the case $\ALPHA=0$ by taking the limit for
$D_\eps = \eps + \bbD$. For $\eps \in [0,1]$ let $U_\eps$ denote the solution
of \eqref{eq:ScalSeSimEq} with $\bbD$ replaced by $D_\eps$, then we have 
\[
U_\eps {-} \wt u_* \,\rightharpoonup\, U_0 {-} \wt u_* \ \text{ in } \rmL^2(\R) 
\quad \text{and} \quad 
q_\eps =D_\eps (U_\eps) U'_\eps  \,\rightharpoonup \, q_0=\bbD(U_0)U'_0 \ 
\text{ in } \rmH^{-1}(\R). 
\]

Our first result concerns the monotonicity of the profiles, which then
will also improve the convergence by using Helly's selection principle for
sequences of monotone functions.

\begin{lemma}[Monotonicity of scalar profiles]
\label{le:ScalMonot} 
Assume that $(U_-,U_+)\in \R^2$ are given with $U_-< U_+$ and that
$\bbD \in \rmC^0(\R)$ is nonnegative. Then, the unique front $U$ provided
by Theorem \ref{th:VectValProf} is nondecreasing and hence only depends
on $\bbD|_{[U_-,U_+]}$.
\end{lemma}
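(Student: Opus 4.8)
The plan is to work first in the nondegenerate case $\ALPHA = \inf \bbD > 0$, where Theorem \ref{th:VectValProf} gives a classical solution $U \in \rmB\rmC^2(\R)$ with flux $q(y) = \bbD(U(y))\,U'(y)$ satisfying $q' + \frac{y}{2}\,U' = 0$, i.e.\ $q'(y) = -\frac{y}{2\bbD(U(y))}\,q(y)$, so that $q$ has constant sign. From the integral relation \eqref{eq:Rel.U.olUpm} (second line), $\int_\R y(\ol u_\pm - U)\,\dd y = A(U_+) - A(U_-) = \int_{U_-}^{U_+}\bbD(s)\,\dd s \geq 0$, and since $\bbD$ is not identically zero on $[U_-,U_+]$ (otherwise there is nothing to prove, the flat profile being covered), this integral is strictly positive, forcing $q \not\equiv 0$. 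Since $q$ solves a linear homogeneous first-order ODE, $q$ is either everywhere positive or everywhere negative; combined with $\bbD \geq \ALPHA > 0$ this gives $U'$ of constant sign. The boundary conditions $U(-\infty) = U_- < U_+ = U(+\infty)$ then force $U' \geq 0$, hence $U$ is nondecreasing and $U(y) \in [U_-, U_+]$ for all $y$.

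Next I would pass to the degenerate case $\ALPHA = 0$ via the approximation $D_\eps = \eps + \bbD$ already set up in the text: for each $\eps \in {]0,1[}$ the solution $U_\eps$ is nondecreasing by the previous paragraph and takes values in $[U_-,U_+]$. The key point is that $\{U_\eps\}$ is a uniformly bounded family of monotone functions, so by Helly's selection principle there is a subsequence converging pointwise (everywhere, after passing to a further subsequence) to a nondecreasing limit $\bar U : \R \to [U_-,U_+]$; by dominated convergence this pointwise limit agrees in $\rmL^2_\mafo{loc}$ (indeed after the $\rmL^2$-weak convergence stated in the excerpt) with the limit $U_0 = U$ already identified in Step 5 of the proof of Theorem \ref{th:VectValProf} as the unique stationary profile. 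Hence $U$ itself is (a.e., and therefore — being characterized up to the precise representative by \eqref{eq:WeakStatProf}) equal to a nondecreasing function, which is what is claimed. Finally, since a nondecreasing profile stays in $[U_-,U_+]$, equation \eqref{eq:ScalSeSimEq} only involves $\bbD$ restricted to $[U_-,U_+]$, and uniqueness from Theorem \ref{th:VectValProf} shows $U$ depends only on $\bbD|_{[U_-,U_+]}$.

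The step I expect to be the main obstacle is establishing constant sign of $q$ cleanly in a way that also survives the degenerate limit: in the nondegenerate case the ODE for $q$ makes this immediate, but one must be careful that $q$ is genuinely nonzero, which is where the integral identity \eqref{eq:Rel.U.olUpm} (or, alternatively, directly ruling out $q \equiv 0$ by noting $q \equiv 0 \Rightarrow U' \equiv 0 \Rightarrow U$ constant, contradicting $U_- \neq U_+$) does the work. A minor secondary subtlety is matching the Helly limit with the weak-$\rmL^2$ limit $U_0$ from Theorem \ref{th:VectValProf}; this is handled because a bounded monotone family converging weakly in $\rmL^2$ to some function also converges to it pointwise a.e.\ along a subsequence (both limits must coincide a.e.), and the weak formulation \eqref{eq:WeakStatProf} only sees $\bfU \in \rmL^2_\mafo{loc}$, so identifying $U$ with its monotone representative is legitimate.
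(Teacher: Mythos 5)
Your proposal is correct and follows essentially the same route as the paper: in the nondegenerate case the flux $q$ solves a linear homogeneous first-order ODE and hence has constant sign, which together with an integral/boundary-condition argument yields $U'\geq 0$, and the degenerate case is handled by the approximation $D_\eps=\eps+\bbD$ (the paper states this in one line, while the surrounding text already invokes Helly's selection principle exactly as you spell out). The only cosmetic difference is that the paper fixes the sign of $q$ via $\int_\R q\,\dd y=\int_{U_-}^{U_+}\bbD(u)\,\dd u>0$ rather than via the monotone-limit comparison $U_-<U_+$, which is an equivalent step.
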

\begin{proof}
We first consider the case $\ALPHA=\min\bigset{\bbD(u)}{ u\in \R} >0$, then the
flux $q(y) = \bbD(U(y))U'(y)$ satisfies the ODE $q' + y q/(2\bbD(u))=0$. Hence,
$q$ cannot change its sign. Moreover, the boundary conditions impose 
$\int_\R q \dd y = \int_\R \bbD(U)U' \dd  y = \int_{U_-}^{U_+} \bbD(u) \dd
u\geq \alpha (U_+{-}U_-)>0$. Hence, we find $q\geq 0$ and thus $U'(y) \geq 0$. 

The general case with $\ALPHA=0$ follows by approximation.
\end{proof}

Restricting to the case $U_-< U_+$ we now define the relevant constants 
\[
D_*:=\min\bigset{\bbD(u)}{ u\in [U_-,U_+] } 
\quad \text{and} \quad 
D^*:=\max\bigset{\bbD(u)}{ u\in [U_-,U_+] },
\]
which satisfy $\ALPHA \leq D_* < D^* \leq \BETA$. Clearly, all estimates on the
profile $U$ will only depend on $D_*$ and $D^*$.  Moreover, when approximating
$U$ by $U_\eps$ as indicated above, we can use the monotonicity $U'_\eps(y)\geq
0$ and employ Helly's selection principle to conclude 
\[
 U_\eps (y) \to U(y) \ \text{ at all continuity points $y \in \R $ of } U.
\]

We proceed by supplying further a priori estimates that are
essentially contained in
\cite[Thm.\,5]{VanPel77CSSN} and have their origin in
\cite{Sham76CDD3}. However, here we provide a much shorter direct proof.

\begin{proposition}[A priori estimates]\label{pr:Estim.unif.eps}
  Assume $U_-< U_+$ and that $\bbD \in \rmC^0([U_-,U_+])$ with
  $D_*=\min \bbD \geq 0$ and $D^*=\max \bbD$. Then, the solution $U$ and its
  associated flux $Q(y)=\bbD(U(y))U'(y)$ satisfy the estimates
\begin{equation}
  \label{eq:ueps.phieps}
  \begin{aligned}
   0& \leq U_+- U(y) \leq \big(U_+ - U(z)\big) \,\ee^{-(y^2-z^2)/(4 D^*)}
   &&\text{for }0\leq z \leq y, 
  \\
  0&\leq  U(y)- U_- \leq  \big(U(z)-U_-\big) \,\ee^{-(y^2-z^2)/(4 D^*)}
   &&\text{for } 0 \geq z\geq y, 
  \\
   0&\leq Q(\pm y) \leq Q(\pm z)  \,\ee^{-(y^2-z^2)/(4 D^*)}
   &&\text{for }0\leq z\leq y.
  \end{aligned}
\end{equation}
Moreover, the values $U(0) \in {]U_-,U_+[}$  and
$Q(0)>0$ are restricted by the inequalities 
\begin{equation}
  \label{eq:Bounds.unif.eps}
  \begin{aligned}
   \int_{U_-}^{U(0)} (s{-}U_-)\bbD(s) \dd s \leq 2 Q(0)^2 \leq
   (U(0){-}U_-) \int_{U_-}^{U(0)} \bbD(s) \dd s, 
  \\  
   \int_{U(0)}^{U_+} (U_+{-}s) \bbD(s) \dd s \leq 2 Q(0)^2 \leq
   (U_+{-}U(0))\int_{U(0)}^{U_+}  \bbD(s) \dd s.
  \end{aligned}
\end{equation}
\end{proposition}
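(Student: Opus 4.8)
The plan is to reduce everything to the nondegenerate case $D_*>0$ and then pass to the limit. First I would prove all the stated inequalities for the regularized problems with $\bbD$ replaced by $D_\eps=\eps+\bbD$, where $D_*^\eps=\eps>0$ and the relevant maximum is $D^{*,\eps}=\eps+D^*$; Theorem \ref{th:VectValProf} provides a smooth solution $U_\eps$, and since the constants depend only on $D^{*,\eps}\to D^*$, the inequalities survive the limit $\eps\to0^+$ via the convergences $U_\eps\to U$ at continuity points (Helly's principle together with the monotonicity from Lemma \ref{le:ScalMonot}) and $q_\eps\rightharpoonup q$ in $\rmH^{-1}(\R)$ recalled above. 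So assume $D_*>0$; then $U\in\rmC^2(\R)$ is nondecreasing with values in $[U_-,U_+]$, the flux $Q=\bbD(U)U'$ is nonnegative, and integrating \eqref{eq:ScalSeSimEq} once gives $Q'=-\frac y2 U'$. Hence $Q$ is nonincreasing on $[0,\infty)$, nondecreasing on $(-\infty,0]$, and, being nonnegative with $\int_\R Q\,\dd y=\int_{U_-}^{U_+}\bbD(s)\,\dd s<\infty$, satisfies $Q(\pm\infty)=0$; integrating $Q'=-\frac s2 U'$ over $[y,\infty)$ then yields, for $y\ge0$, the two representations $2Q(y)=\int_y^\infty sU'(s)\,\dd s$ and $U_+-U(y)=\int_y^\infty U'(s)\,\dd s$, with mirror images on $(-\infty,0]$.

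For the exponential estimates I would pass to logarithmic derivatives. From $Q'=-\frac{yQ}{2\bbD(U)}$ and $0\le\bbD(U)\le D^*$ one gets $\frac{\rmd}{\rmd y}\log Q\le -\frac{y}{2D^*}$ on $(0,\infty)$, and integrating from $z$ to $y$ delivers the flux bound in \eqref{eq:ueps.phieps} (the case $y\le0$ being symmetric). For the decay of $U$, the first representation gives $2Q(y)\ge y\,(U_+-U(y))$, hence $U'=Q/\bbD(U)\ge Q/D^*\ge\frac{y}{2D^*}(U_+-U)$, i.e.\ $\frac{\rmd}{\rmd y}\log(U_+-U)\le -\frac{y}{2D^*}$ on $(0,\infty)$; integration gives the first line of \eqref{eq:ueps.phieps}, and the analogous computation on $(-\infty,0]$ gives the second. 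Nonnegativity of $Q$ and of $U_+-U$, $U-U_-$ is already ensured by monotonicity.

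For the bounds at $y=0$ I would first record two identities: integrating $(Q^2)'=2QQ'=-y\,Q\,U'$ over each half-line (using $Q(\pm\infty)=0$) gives $Q(0)^2=\int_0^\infty yQ(y)U'(y)\,\dd y=\int_{-\infty}^0|y|Q(y)U'(y)\,\dd y$, and Fubini applied to $2Q(y)=\int_y^\infty sU'(s)\,\dd s$ gives $\int_0^\infty Q\,\dd y=\int_0^\infty\frac{s^2}2 U'(s)\,\dd s$, which by the substitution $s=U(y)$ equals $\int_{U(0)}^{U_+}\bbD(s)\,\dd s$. The right inequality in the second line of \eqref{eq:Bounds.unif.eps} then follows from Cauchy--Schwarz applied to $Q(0)=\int_0^\infty\frac s2 U'(s)\,\dd s$ with weight $U'$, using $\int_0^\infty U'=U_+-U(0)$ and $\int_0^\infty\frac{s^2}4 U'=\frac12\int_{U(0)}^{U_+}\bbD$. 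For the left inequality, I would rewrite $\int_{U(0)}^{U_+}(U_+-s)\bbD(s)\,\dd s=\int_0^\infty(U_+-U(y))Q(y)\,\dd y$, insert the two integral representations, apply Fubini on $[0,\infty)^2$, and use $\min(s,t)(s+t)=st+\min(s,t)^2$ to obtain the exact identity $\int_0^\infty(U_+-U)Q\,\dd y=Q(0)^2+\frac12\int_0^\infty s^2 U'(s)(U_+-U(s))\,\dd s$; then $U_+-U(s)\le 2Q(s)/s$ turns the last integral into $2\int_0^\infty sU'(s)Q(s)\,\dd s=2Q(0)^2$, so the right-hand side is $\le 2Q(0)^2$. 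The two inequalities on $(-\infty,0]$ are proved identically, and together they force $U(0)\in{]U_-,U_+[}$ and $Q(0)>0$ whenever $\bbD\not\equiv 0$ on $[U_-,U_+]$.

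The step I expect to be the main obstacle is the left inequality in \eqref{eq:Bounds.unif.eps}: the naive pointwise bound $U_+-U(y)\le 2Q(y)/y$ inserted directly into $\int_0^\infty(U_+-U)Q\,\dd y$ produces a nonintegrable $1/y$ singularity at the origin, so one genuinely needs the Fubini/symmetrization identity above, in which the extra factor $s^2$ cancels this singularity. A secondary point requiring care is the passage $\eps\to0^+$ in the degenerate case: one must know that $y=0$ is a continuity point of the limiting monotone profile so that $U_\eps(0)\to U(0)$ and $Q_\eps(0)\to Q(0)$ (where Lemma \ref{le:ScalMonot} and Helly's selection principle enter); in the fully degenerate case $\bbD\equiv 0$ on $[U_-,U_+]$ the profile is the jump $\ol u_\pm$ with $Q\equiv 0$, and all inequalities hold trivially.
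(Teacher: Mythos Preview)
Your proof is correct, and for the flux estimate, the upper bound on $Q(0)^2$, and the regularization strategy you follow the paper essentially verbatim. There are two places where you diverge.

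For the exponential decay of $U_+-U$ you use the pointwise inequality $2Q(y)\geq y\,(U_+{-}U(y))$ together with $U'\geq Q/D^*$ to obtain the differential inequality for $\log(U_+{-}U)$ directly. The paper instead integrates \eqref{eq:ScalSeSimEq} to get an ODE $\bbD(U)w' = -\tfrac z2 w - \beta(z)$ with $\beta(z)=\tfrac12\int_z^\infty w\,\dd\eta\geq 0$ and then applies Duhamel's formula. Your route is a little shorter and avoids the variation-of-constants step; both give exactly the same bound.

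For the lower bound in \eqref{eq:Bounds.unif.eps} you take a substantially longer path than necessary. The obstacle you flag --- that $U_+{-}U(y)\leq 2Q(y)/y$ produces a nonintegrable singularity at $y=0$ --- is real for \emph{that} pointwise bound, but the paper sidesteps it entirely by using a different pointwise bound, namely $Q(z)\leq Q(0)$, which you have already established. Concretely, the paper rewrites
\[
\int_{U(0)}^{U_+}\!(U_+{-}s)\bbD(s)\,\dd s
= \int_0^\infty\!\!\int_0^y Q(z)\,\dd z \; U'(y)\,\dd y
\leq \int_0^\infty\! y\,Q(0)\,U'(y)\,\dd y
= 2Q(0)^2,
\]
where the first equality is one integration by parts in $s$ followed by two substitutions $u=U(y)$, $s=U(z)$. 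This is a two-line argument and makes your Fubini/symmetrization identity $\int_0^\infty(U_+{-}U)Q\,\dd y = Q(0)^2+\tfrac12\int_0^\infty s^2U'(U_+{-}U)\,\dd s$ unnecessary. Your identity is correct and interesting in its own right (it is exact and even gives a sharper two-sided control), but it is not needed here.
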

\begin{proof} Again we may use $D_*\geq \ALPHA>0$ for deriving the following estimates.

We first show the last estimate in \eqref{eq:ueps.phieps}. For this we observe
\[
Q'(y) = - \frac y2 U'(y) =  \frac{-y}{2\bbD(U(y))}\; Q(y),
\] 
which gives the relation $Q(y)=\Psi_z(y) Q(z)$ with
$\Psi_z(y)=\exp\big(\int_z^y \frac{-\eta}{2\bbD(U(\eta))} \dd\eta \big) $.
Using $\bbD(u)\leq D^*$ gives the third estimate in \eqref{eq:ueps.phieps}. 

From this bound for $Q=\bbD(U)U'$ and the lower bound $\bbD \geq D_* >0$ we now
deduce that $U$ converges faster than exponential to its limits $U_\pm$ for
$y \to \pm \infty$. For $y\geq 0$ we set $w(y)= U_+ -U(y)$, and by integrating
\eqref{eq:ScalSeSimEq} over $y \in {]z,\infty[}$ we obtain
\[
\bbD(U(z))w'(z)= -\frac z2 w(z) - \beta(z) \quad \text{with }
\beta(z)=\frac12 \int_z^\infty w(y) \dd y \geq 0.  
\]
For $0\leq z\leq y$ Duhamel's formula gives 
\begin{align*}
U_+-U(y)&\  = \ w(y)= \Psi_z(y) w(z)- \int_z^y \Psi_\eta(y)
   \frac{\beta(\eta)}{\bbD(U(\eta))} \dd \eta
\\ 
&\overset{\beta\geq 0}\leq \; \Psi_z(y) w(z) \leq \ee^{-(y^2-z^2)/(4D^*)}
(U_+{-}U(0)).
\end{align*}
Together with the analogous result for $y\leq 0$ the estimates in
\eqref{eq:ueps.phieps} are established. 

For \eqref{eq:Bounds.unif.eps} it suffices to show the second line by
integration over $y\geq 0$. The first line follows similarly by integration
over $y\leq 0$. For the upper estimate we proceed as follows:
\begin{align*}
4Q(0)^2 & = \Big( \int_0^\infty 2Q'(y)\dd y \Big)^2
   \overset{\text{\eqref{eq:ScalSeSimEq}}}=  
   \Big( \int_0^\infty  y\,U'(y)\dd y \Big)^2 
\\
&\overset{\text{CS}}\leq  \int_0^\infty U' (y) \dd y \; 
   \int_0^\infty y^2 U'(y) \dd y \ = \ \big(U_+{-}U(0)\big)
   \int_0^\infty\!\! {-}2 y Q' (y) \dd y 
\\
&= \big(U_+{-}U(0) \big) \bigg[{-}2y Q(y)\Big|_0^\infty + \int_0^\infty
  2 Q(y) \dd y  \bigg] \ = \ \big(U_+{-}U(0)\big) \int_0^\infty\!\! 
  2 \bbD(U(y)) U'(y) \dd y 
\\
&=2 \big(U_+{-}U(0)\big) \int_{U(0)}^{U_+}  \bbD(s) \dd s,
\end{align*}
where we used the monotonicity $U'(y)\geq 0$. This is the
desired upper estimate for $Q(0)^2$. 

For the lower estimate we proceed as follows:
\begin{align*}
&\int_{U(0)}^{U_+} (U_+{-}s) \bbD(s) \dd s  = \int_{U(0)}^{U_+}
\int_{U(0)}^u \bbD(s) \dd s \dd u = \int_{y=0}^\infty \int_{U(0)}^{U(y)}
\bbD(s) \dd s  \:U'(y) \dd y
\\ &
= \int_{y=0}^\infty \int_{z=0}^y  \bbD(U(z)) U'(z) \dd z \:U'(y) \dd y = 
\int_{y=0}^\infty \int_{z=0}^y  Q(z) \dd z  \:U'(y) \dd y
\\
&\overset{**}\leq  \int_{y=0}^\infty y\,Q(0)\:U'(y)\dd y
\overset{\text{\eqref{eq:ScalSeSimEq}}}=    Q(0) \int_0^\infty \!\!{-}2
Q'(y) \dd y \ = \ 2 Q(0)^2,
\end{align*}
where in $\overset{**}\leq $ we used $Q(z)\leq Q(0)$
from \eqref{eq:ueps.phieps}.  
Hence, Proposition \ref{pr:Estim.unif.eps} is established.
\end{proof}

As a simple consequence of \eqref{eq:Bounds.unif.eps} we see that $Q(0)> 0$
and $U(0) \in {]U_-,U_+[}$ as soon as $\bbD \in \rmC^0([U_-,U_+])$ is
nontrivial. In the case of constant $\bbD$ we have
$D_*=D^*$ and the explicit linear solution gives $U(0)=\frac12(U_-+U_+)$ and $Q(0)=
 \sqrt{D_*/(4\pi)} \, (U_+{-}U_-) $.  
Moreover, we obtain upper and lower bounds for $Q(0)$ and $U(0)$
in the case $D_*>0$. These results are valuable if $D_*/D^*$ is close to $1$
but deteriorate for $D_*/D^* \approx 0$. 

\begin{corollary}[Simple bounds on $Q(0)$ and $U(0)$]
\label{co:BoundsQ0U0}
Assume $D^*\geq \bbD(u)\geq D_*>0$ for all $u\in [U_-,U_+]$ and set $\gamma=
\sqrt{D_*/(2D^*)} \leq \sqrt{1/2}$. Then, the unique profile $U$ with
$U(\pm\infty)=U_\pm$ satisfies 
\[
U(0) \in \big[\frac{U_-+\gamma U_+}{1+\gamma} , \frac{\gamma U_- +
  U_+}{1+\gamma} \big] \quad \text{and} \quad 
Q(0) \in \big[ \sqrt{D_*/16}\, (U_+{-}U_-), \sqrt{D^*/8} \, (U_+{-}U_-)\big] .
\]   
In particular, we have $0\leq U'(y)  \leq (U_+{-}U_-) \sqrt{D^*/(8D_*^2)\,}$. 
\end{corollary}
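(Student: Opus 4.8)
The bounds on $Q(0)$ come directly from the two inequality chains in \eqref{eq:Bounds.unif.eps}, and the bounds on $U(0)$ come from combining those same chains. I would add the two lines in \eqref{eq:Bounds.unif.eps} to eliminate $Q(0)^2$ and extract a linear inequality in $U(0)$. Concretely, from the upper bound in the first line and the lower bound in the second line,
\[
\int_{U(0)}^{U_+}(U_+{-}s)\bbD(s)\dd s \leq 2Q(0)^2 \leq (U(0){-}U_-)\int_{U_-}^{U(0)}\bbD(s)\dd s,
\]
and symmetrically the other way around. Using $D_* \leq \bbD \leq D^*$ on $[U_-,U_+]$, one has $\int_{U(0)}^{U_+}(U_+{-}s)\bbD(s)\dd s \geq \tfrac12 D_*(U_+{-}U(0))^2$ and $\int_{U_-}^{U(0)}\bbD(s)\dd s \leq D^*(U(0){-}U_-)$, hence $\tfrac12 D_*(U_+{-}U(0))^2 \leq D^*(U(0){-}U_-)^2$, i.e. $(U_+{-}U(0)) \leq \gamma^{-1}(U(0){-}U_-)$ with $\gamma = \sqrt{D_*/(2D^*)}$. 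Rearranging gives $U(0) \geq (U_- + \gamma U_+)/(1{+}\gamma)$; the symmetric argument (swapping the roles of the two lines of \eqref{eq:Bounds.unif.eps}) gives the upper bound $U(0) \leq (\gamma U_- + U_+)/(1{+}\gamma)$.

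For $Q(0)$, I would feed these two-sided bounds on $U(0)$ back into \eqref{eq:Bounds.unif.eps}. From the upper estimate $2Q(0)^2 \leq (U(0){-}U_-)\int_{U_-}^{U(0)}\bbD(s)\dd s \leq D^*(U(0){-}U_-)^2 \leq D^*(U_+{-}U_-)^2$ (since $U(0){-}U_- \leq U_+{-}U_-$), we obtain $Q(0) \leq \sqrt{D^*/8}\,(U_+{-}U_-)$. For the lower bound I would use the lower estimates in \eqref{eq:Bounds.unif.eps}: $2Q(0)^2 \geq \int_{U_-}^{U(0)}(s{-}U_-)\bbD(s)\dd s \geq \tfrac12 D_*(U(0){-}U_-)^2$, and likewise $2Q(0)^2 \geq \tfrac12 D_*(U_+{-}U(0))^2$; adding these and using $(U(0){-}U_-)^2 + (U_+{-}U(0))^2 \geq \tfrac12(U_+{-}U_-)^2$ yields $2Q(0)^2 \geq \tfrac14 D_*(U_+{-}U_-)^2$, i.e. $Q(0) \geq \sqrt{D_*/16}\,(U_+{-}U_-)$. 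Finally, the pointwise derivative bound follows from $|U'(y)| = Q(y)/\bbD(U(y)) \leq Q(0)/D_*$ (using $Q(y) \leq Q(0)$ from \eqref{eq:ueps.phieps} and $\bbD \geq D_*$), together with the just-established $Q(0) \leq \sqrt{D^*/8}\,(U_+{-}U_-)$, giving $0 \leq U'(y) \leq (U_+{-}U_-)\sqrt{D^*/(8 D_*^2)}$.

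**Main obstacle.** There is no genuine analytic difficulty here — everything reduces to Proposition \ref{pr:Estim.unif.eps}, which is already proved. The only thing to be careful about is the bookkeeping: making sure that the right line of \eqref{eq:Bounds.unif.eps} is paired with the right estimate when eliminating $Q(0)^2$, and keeping track of whether a one-sided or two-sided comparison of $U(0)$ with the endpoints is needed at each step (e.g. for the lower bound on $Q(0)$ one must use both lower estimates and the elementary inequality $a^2+b^2 \geq \tfrac12(a{+}b)^2$, rather than just one line). I would also note explicitly that the hypothesis $D_* > 0$ is what makes $\gamma > 0$ and the derivative bound finite, so the corollary genuinely excludes the degenerate case, consistent with the examples in Example \ref{ex:DegenProfiles}.
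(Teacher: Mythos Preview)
Your argument for the bounds on $U(0)$ and the lower bound on $Q(0)$ is correct and matches the paper's approach (the paper simply records the four inequalities $\tfrac12 D_*(U(0){-}U_-)^2 \leq 2Q(0)^2 \leq D^*(U(0){-}U_-)^2$ and $\tfrac12 D_*(U_+{-}U(0))^2 \leq 2Q(0)^2 \leq D^*(U_+{-}U(0))^2$ and says the rest follows; you have spelled out that ``rest'' correctly). The derivative bound is also handled exactly as in the paper.

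However, there is a slip in your upper bound for $Q(0)$. From your chain
\[
2Q(0)^2 \leq D^*(U(0){-}U_-)^2 \leq D^*(U_+{-}U_-)^2
\]
you only get $Q(0) \leq \sqrt{D^*/2}\,(U_+{-}U_-)$, not $\sqrt{D^*/8}\,(U_+{-}U_-)$. Using only one line of \eqref{eq:Bounds.unif.eps} together with the crude estimate $U(0){-}U_- \leq U_+{-}U_-$ loses a factor of $2$. The fix is to do here what you already did for the lower bound: use \emph{both} upper estimates in \eqref{eq:Bounds.unif.eps}, which give $2Q(0)^2 \leq D^*(U(0){-}U_-)^2$ and $2Q(0)^2 \leq D^*(U_+{-}U(0))^2$, hence
\[
2Q(0)^2 \leq D^*\min\big\{(U(0){-}U_-)^2,\,(U_+{-}U(0))^2\big\} \leq \tfrac14 D^*(U_+{-}U_-)^2,
\]
since $\min\{U(0){-}U_-,\,U_+{-}U(0)\} \leq \tfrac12(U_+{-}U_-)$. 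This yields the claimed $Q(0) \leq \sqrt{D^*/8}\,(U_+{-}U_-)$.
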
 
\begin{proof} 
We simply insert the upper and lower bound for $\bbD$ into
\eqref{eq:Bounds.unif.eps} and find
\begin{align*}
&  \frac12(U(0){-}U_-)^2 D_* \leq 2 Q(0)^2 \leq (U(0){-}U_-)^2 D^*
 \quad \text{and} \\
& \frac12(U_+{-}U(0))^2 D_* \leq 2 Q(0)^2 \leq (U_+{-}U(0))^2 D^*. 
\end{align*}
From this the first two estimates follow easily. 

The derivative satisfies $U'(y) = Q(y)/\bbD(U(y)) \leq Q(0)/D_*$ giving 
the result. 
\end{proof}

With this information we can pass to the limit and obtain the following
existence result. The conditions for deriving $U'\in \rmL^p(\R)$ are indeed
sharp (but leaving the critical cases open), as can be seen by comparing with
the cases (IV) and (V) in Example \ref{ex:DegenProfiles}. 

\begin{theorem}
[Self-similar fronts in the degenerate case]
\label{th:SeSimiDegerate}
Assume $U_-< U_+$ and that $\bbD\in \rmC^0([U_-,U_+]) $ satisfies
\begin{equation}
  \label{eq:Cond.D.theta}
\exists \, \theta \in {]0,1[}, \ p\in {[1,\infty[}: \quad 
\wt C_{p,\theta} := \int_{U_-}^{U_+} \! \Big( 
\frac{(U_+{-}u)^\theta(u{-}U_-)^\theta}{\bbD(u)}\Big)^{p-1} \dd u < \infty.
\end{equation}
Then, the unique and monotone solution $U$ of \eqref{eq:ScalSeSimEq} satisfies 
$U' \in \rmL^p(\R)$, namely 
\begin{align}
  \label{eq:Lp.U'}
\| U'\|_{\rmL^p(\R)}^p \leq \wh C_\theta^{p-1} \wt C_{p,\theta} \quad \text{with } 
 \wh C_\theta:= \sqrt{\frac{2D^*}{1{-}\theta}} \:
 \frac{U_+ - U_- }{  (U_+{-}U(0))^\theta (U(0){-}U_-)^\theta  }.
\end{align}

Moreover, if $\int_{U(0)}^{U_+} \bbD(s)/(U_+{-}s)\dd s <\infty$, then we
have 
\begin{equation}
  \label{eq:Constancy}
  U(y)=U_+ \ \text{ for } y \geq y_+^*:= \frac{U_+ {-} U(0)}{Q(0)}
  \,\int_{U(0)}^{U_+} \frac{\bbD(u)}{U_+{-}u} \dd u >0,
\end{equation}
and an analogous statement holds for $y\leq y_-^* := -
 \frac{U(0) {-} U_-} {Q(0)}
  \,\int_{U_-}^{U(0)} \frac{\bbD(u)}{u{-}U_-} \dd u <0 $. 
\end{theorem}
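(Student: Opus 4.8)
The plan is to work, as in Proposition~\ref{pr:Estim.unif.eps}, with the nondegenerate case $D_*\geq\ALPHA>0$ first, obtaining $\eps$-independent bounds, and then pass to the limit $D_\eps=\eps+\bbD\to\bbD$ using the monotone convergence $U_\eps\to U$ guaranteed by Helly's selection principle. For the $\rmL^p$-bound \eqref{eq:Lp.U'} I would split $\R$ at $0$ and estimate $\int_0^\infty (U')^p\,\dd y$; the analogous estimate on $\R_{<0}$ is symmetric. Writing $U'(y)=Q(y)/\bbD(U(y))$ and changing variables $u=U(y)$ (legitimate by monotonicity) one gets $\int_0^\infty (U')^p\,\dd y=\int_{U(0)}^{U_+} \big(Q(Y(u))/\bbD(u)\big)^{p-1}\,\dd u$, where $Y=U^{-1}$. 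The task is then to bound $Q(Y(u))^{p-1}$ by something integrable, and the natural candidate is $Q(Y(u))\leq \wh C_\theta\,(U_+{-}u)^\theta(u{-}U_-)^\theta$ on $[U(0),U_+]$ (with the symmetric bound on the other half). To get such a bound I would combine the exponential flux decay from \eqref{eq:ueps.phieps}, namely $Q(y)\leq Q(0)\ee^{-y^2/(4D^*)}$ for $y\geq 0$, with the faster-than-exponential decay of $U_+{-}U(y)$ from the same display; more precisely, from $U_+-U(y)\geq$ (something) one produces a lower bound $y\geq$ (something involving $U_+-U(y)$), which converts the Gaussian in $Q$ into the stated power of $(U_+-u)$. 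A cleaner route: use $Q(y)^2\leq (U_+-U(y))\int_{U(y)}^{U_+}\bbD(s)\,\dd s$, obtained exactly as the upper estimate in \eqref{eq:Bounds.unif.eps} but integrating from $y$ to $\infty$ instead of from $0$; this already gives $Q(y)\leq (U_+-U(y))^{1/2}\,(D^*(U_+-U(y)))^{1/2}=\sqrt{D^*}\,(U_+-U(y))$, and iterating/refining with the $(u-U_-)$ factor near $y=0$ yields the product form. The constant $\wh C_\theta$ with its $\sqrt{2D^*/(1-\theta)}$ and the normalising denominator $(U_+{-}U(0))^\theta(U(0){-}U_-)^\theta$ then emerges from carefully tracking these estimates.

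For the constancy statement \eqref{eq:Constancy}, assume $\int_{U(0)}^{U_+}\bbD(s)/(U_+{-}s)\,\dd s<\infty$ and argue by contradiction: suppose $U(y)<U_+$ for all finite $y$. Set $w(y)=U_+-U(y)>0$ for $y\geq 0$. From the flux identity $Q(y)=\bbD(U(y))U'(y)=-\bbD(U)w'(y)$ and the relation $\bbD(U(z))w'(z)=-\frac z2 w(z)-\beta(z)$ with $\beta\geq 0$ derived in the proof of Proposition~\ref{pr:Estim.unif.eps}, one gets $-\bbD(U)w'\geq \frac y2 w\geq 0$, hence $Q(y)\geq \frac y2 w(y)$; but more usefully $-w'/w\geq (y/2)/\bbD(U)\cdot(w/w)$ is not quite the right inequality — instead use $Q(y)\leq Q(0)$ (third line of \eqref{eq:ueps.phieps}) to get $-\bbD(U(y))w'(y)\leq Q(0)$, i.e.\ $\bbD(U(y))\,\dd w \geq -Q(0)\,\dd y$ along decreasing $w$; changing variables to $u=U(y)$ this reads $\dd y\leq \frac{1}{Q(0)}\,\frac{\bbD(u)}{?}\,\dd u$. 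The correct manipulation: $\dd y = \dd u/U'(y)=\bbD(u)\,\dd u/Q(Y(u))$ and, by the lower bound $Q(y)\geq \frac y2(U_+-U(y))=\frac y2 w$ together with the earlier bound, one shows $Q(Y(u))\geq c\,(U_+-u)$ near $u=U_+$, so that $y_+^*-0=\int_0^{y_+^*}\dd y=\int_{U(0)}^{U_+}\frac{\bbD(u)}{Q(Y(u))}\,\dd u\leq \frac{U_+-U(0)}{Q(0)}\int_{U(0)}^{U_+}\frac{\bbD(u)}{U_+-u}\,\dd u<\infty$, contradicting $U(y)<U_+$ for all $y$ and pinning the first touching point at the stated $y_+^*$. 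The bound $Q(Y(u))\geq \frac{Q(0)}{U_+-U(0)}(U_+-u)$ is the crux here and follows from $Q(y)=\Psi_0(y)Q(0)$ being decreasing combined with $U_+-U(y)\leq (U_+-U(0))\Psi_0(y)$, i.e.\ $U_+-U(y)\leq (U_+-U(0))Q(y)/Q(0)$, which is precisely the third inequality in \eqref{eq:ueps.phieps} compared with its first. The statement for $y\leq y_-^*$ is symmetric under $u\mapsto U_-+U_+-u$, $y\mapsto -y$.

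The main obstacle I anticipate is twofold. First, in \eqref{eq:Lp.U'} one must obtain the \emph{two-sided} product bound $Q(Y(u))\lesssim (U_+{-}u)^\theta(u{-}U_-)^\theta$ uniformly, not just the one-sided $(U_+{-}u)$-bound near $U_+$: near $u=U(0)$ the factor $(U_+-u)$ is bounded below, so the genuine decay there must come from the $(u-U_-)^\theta$ factor, and stitching the two regimes together while keeping the constant in the clean form $\wh C_\theta$ requires the exponent $\theta<1$ (this is exactly where $\sqrt{2D^*/(1-\theta)}$ enters, via an integral $\int_0^y \ee^{-\eta^2/(4D^*)}\eta\,\dd\eta$-type bound or a Grönwall-type refinement) — the critical cases $\theta=0,1$ genuinely fail, matching Example~\ref{ex:DegenProfiles}(IV). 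Second, all of this is derived for $D_\eps$, and one must check the passage $\eps\to0$: the right-hand sides in \eqref{eq:Lp.U'} and \eqref{eq:Constancy} involve $D^*$, $U(0)$, $Q(0)$, which are stable under the limit (using Corollary~\ref{co:BoundsQ0U0} or the bounds \eqref{eq:Bounds.unif.eps}, which survive $\eps\to0$ since $\bbD$ itself is used there, not $D_\eps$, once the monotone limit is taken), and $\|U'_\eps\|_{\rmL^p}\leq(\text{bound})$ passes to the limit by Fatou using $U'_\eps\to U'$ a.e., which follows from Helly plus the monotone structure. I would state these limit arguments briefly and refer back to Step~5 of the proof of Theorem~\ref{th:VectValProf} and to the discussion following Lemma~\ref{le:ScalMonot}.
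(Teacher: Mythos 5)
Your overall architecture coincides with the paper's: a pointwise product bound for the flux, $Q(y)\lesssim \big((U_+{-}U(y))(U(y){-}U_-)\big)^\theta$, followed by the substitution $u=U(y)$ for \eqref{eq:Lp.U'}; and the monotonicity of $h(y)=2Q(y)/(U_+{-}U(y))$ for \eqref{eq:Constancy}. The constancy part is essentially correct: your inequality $U_+{-}U(y)\leq (U_+{-}U(0))\,Q(y)/Q(0)$ is exactly the Duhamel bound $w(y)\leq \Psi_0(y)\,w(0)$ from the proof of Proposition~\ref{pr:Estim.unif.eps} (note you need the intermediate form with $\Psi_0$, not the stated inequalities in \eqref{eq:ueps.phieps}, which have already been weakened by $\bbD\leq D^*$), and it yields $h(y)\geq h(0)$, from which $y\leq \frac{U_+-U(0)}{Q(0)}\int_{U(0)}^{U(y)}\bbD(u)/(U_+{-}u)\,\dd u$ follows as in the paper. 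The change of variables in Step~2 and the $\eps\to 0$ limit via Helly and Fatou are also fine.

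The gap is in the crucial pointwise flux bound (the paper's \eqref{eq:QvsU.theta}), for which both of your proposed derivations fail. Route~1 needs a \emph{lower} bound $U_+{-}U(y)\geq g(y)>0$ in order to convert the Gaussian in $Q(y)\leq Q(0)\ee^{-y^2/(4D^*)}$ into a power of $U_+{-}U(y)$; no such bound is available, and it is false precisely in the degenerate situation the theorem addresses, where $U\equiv U_+$ beyond $y_+^*$. Route~2 claims $Q(y)\leq \sqrt{D^*}\,(U_+{-}U(y))$ by repeating the Cauchy--Schwarz computation of \eqref{eq:Bounds.unif.eps} with base point $y$ instead of $0$; but that computation then produces an extra boundary term $2yQ(y)$ from the integration by parts $\int_y^\infty \eta^2 U'\,\dd \eta = 2yQ(y)+2\int_{U(y)}^{U_+}\bbD(s)\,\dd s$, and the resulting correct bound carries an unavoidable factor linear in $y$. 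Indeed, integrating $2Q'=-\eta U'$ over $[y,\infty[$ gives the identity $2Q(y)=y\,(U_+{-}U(y))+\int_y^\infty (U_+{-}U(z))\,\dd z$, whence $Q(y)\geq \tfrac y2 (U_+{-}U(y))$, so the ratio $Q/(U_+{-}U)$ grows at least linearly wherever $U<U_+$ and your claimed inequality is false for large $y$. The paper's actual mechanism starts from this same identity, bounds the tail integral by $(U_+{-}U(y))\sqrt{D^*/\pi}$ using the first line of \eqref{eq:ueps.phieps}, and then absorbs the prefactor $\tfrac12(y+\sqrt{\pi D^*})$ by splitting $(U_+{-}U(y))=(U_+{-}U(y))^\theta(U_+{-}U(y))^{1-\theta}$ and applying the Gaussian \emph{upper} bound to the $(1{-}\theta)$-power; maximizing $y\,\ee^{-(1-\theta)y^2/(4D^*)}$ is what produces $\sqrt{2D^*/(1{-}\theta)}$. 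You correctly anticipate that $\theta<1$ must enter somewhere, but neither of your routes supplies this step (and Route~2, if it worked, would render it unnecessary), so the heart of the proof of \eqref{eq:Lp.U'} is missing.
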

\begin{proof} We again assume $\bbD(U)\geq D_*>0$ such that $U$ is smooth. 

\STEP{Step 1. Bound for $Q$ in terms of $\min\{U{-}U_-,U_+{-}U)\}$:} We first
show that for all $\theta \in {[0,1[}$ there exists $C_\theta$ such that 
\begin{equation}
  \label{eq:QvsU.theta}
\forall\, y\in \R:\quad   Q(y) \leq \sqrt{\frac{2D^*}{1{-}\theta}} 
\Big(\frac{(U_+{-}U(y))(U(y){-}U_-)}{(U_+{-}U(0))(U(0){-}U_-) } \Big)^\theta 
\big(U_+ - U_-\big).
\end{equation}
It is sufficient to estimate $Q(y)$ by $(U_+{-}U(y))^\theta$  for $y\geq
0$ and by $(U(y){-}U_-)^\theta$ for $y\leq 0$. We concentrate on $y>0$, the case
$y<0$ is similar. From $2Q'=- y U'$ we obtain
\[
2Q(y)= -\int_y^\infty 2Q'(z) \dd z =\int_y^\infty z (U'(z){-}U_+)\dd z =
y(U_+ {-} U(y)) + \int_y^\infty\!\big( U_+ {-} U(z) \big) \dd z.
\] 
Using the first estimate in \eqref{eq:ueps.phieps}, the last term can be
estimated via
\begin{align*}
\int_y^\infty\!\big( U_+ {-} U(z) \big) \dd z &\leq (U_+{-}U(y)) \int_y^\infty
\ee^{-(z^2-y^2)/(4D^*)}\dd z\\
&  \leq  (U_+{-}U(y)) \int_y^\infty
\ee^{-(z-y)^2/(4D^*)}\dd z = (U_+{-}U(y)) \sqrt{D^*/\pi}. 
\end{align*}
Applying the first estimate in \eqref{eq:ueps.phieps} once again we find
\begin{align*}
Q(y) & \leq \frac12\big( y + \sqrt{\pi D^*}\,\big) (U_+{-}U(y)) \leq
\sqrt{\frac{2D^*}{1{-}\theta}}  \,
  (U_+{-}U(y))^\theta (U_+{-}U(0))^{1-\theta}, 
\end{align*}
where for $y\geq 0$ we estimated $\frac12 \big( y + \sqrt{\pi D^*}\,\big)\, 
  \:\ee^{-(1{-}\theta) y^2/(4D^*)} $  $\leq$ $
  \big((2\ee(1{-}\theta))^{-1/2} +  \sqrt{\pi/4}  \big) 
      \,\sqrt{D^*}$ $ \leq $ $\sqrt{2D^*/(1{-}\theta)}$. 
Moreover, monotonicity gives $U(y) \in [U(0),U_+]$ and we conclude
\[
(U_+{-}U(y))^\theta (U_+{-}U(0))^{1-\theta}  \leq 
\Big(\frac{(U_+{-}U(y))(U(y){-}U_-)}{(U_+{-}U(0))(U(0){-}U_-) } \Big)^\theta 
\big(U_+ - U_-\big).
\]
Thus, \eqref{eq:QvsU.theta} is shown for $y\geq 0$ and the result for $y\leq 0$
follows analogously.

\STEP{Step 2. $\rmL^p$ estimate for $U'$:}  We abbreviate
$\delta(y)=\bbD(U(y))$ and $\mu(u)=( U_+{-}u) (u{-}U_-)$. 
Recalling $Q=\delta U'$ and writing estimate
\eqref{eq:QvsU.theta} as $\delta U'\leq C_* \mu(U(y))^\theta$ we  obtain
\begin{align*}
  \int_\R (U')^p \dd y &= \int_\R \big(
  \frac{\delta(y)U'(y)}{\mu(U(y))^\theta}\big)^{p-1}\;
  \big(\frac{\mu(U(y))^\theta}{\delta(y)}\big)^{p-1} U'(y) \dd y 
\leq C_*^{p-1} \int_{U_-}^{U_+}  \big(\frac{\mu(u)^\theta } {\bbD(u) } 
\big)^{p-1} \dd u  < \infty,
\end{align*}
which is the desired estimate \eqref{eq:Lp.U'}. 

\STEP{Step 3: Constant values for $y\geq y_+^*$.} 
To show \eqref{eq:Constancy} consider $y>0$ such that $U'(z)>0$ for $z
\in [0,y]$. For this, note that $U'$ is continuous on the set ${]Y_-,Y_+[}$
which is defined by the condition $U(y)\in {]U_-,U_+[}$. On ${[0,Y_+[}$ we define the
auxiliary functions  
\[
w(y) = U_+-U(y)>0 \quad \text{and} \quad 
h(y)= \frac{2\bbD(U(y))U'(y)}{U_+- U(y)}= \frac{2Q(y)}{w(y)}= y + \frac1{w(y)}
\int_y^\infty w(z) \dd z,
\]
where the last identity results from integrating \eqref{eq:ScalSeSimEq} over
$z \in {[y,\infty[}$. We easily find
$h'(y)= -w'(y)\int_y^\infty w\dd z /w(y)^2 >0$ because of $U'=-w'>0$ and
conclude $h(y) \geq h(0)$ for all $y\in {[0,Y_+[}$.  With this and $U'>0$ we
obtain
\begin{align*}
  y& \ts =\int_0^y \dd z = \int_0^y \frac 2{h(z)}\:\frac{\bbD(U(z))}{U_+{-}U(z)}
  \: U'(z) \dd z
 \leq \frac2{h(0)} \int_0^y \frac{\bbD(U(z))}{U_+{-}U(z)}
  \: U'(z) \dd z
= \frac2{h(0)} \int_{U(0)}^{U(y)} \frac{\bbD(u)}{U_+{-}u} \dd u. 
\end{align*}
In the limit $y\to Y_+-0$ we find $U(y)\to U_+$ and conclude $Y_+\leq y_+^*$
after inserting $h(0)= 2 Q(0)/w(0)$.  The estimate $y_-^* \leq Y_-$ is shown
analogously.
\end{proof}

\section{Stability of profiles in the scalar case}
\label{se:PME}

The porous medium equation (see e.g.\ \cite{Vazq07PMEM}) is given by $u_t= \Delta
A(u)$, where one is typically interested in nonnegative solutions and $A$ is
defined only for $u\geq 0$. The classical choice, which we will also consider below,
is given by $A(u)=u^m$ for $m>0$. 

The one-dimensional case in parabolic scaling is given in the form 
\begin{equation}
  \label{eq:PDE1D}
  u_\tau = \big(A(u)\big)_{yy} + \frac y2 u_y=  \big(\bbD(u)u_y\big)_y + \frac
  y2 u_y, \qquad u(\tau,\pm \infty)=U_\pm .
\end{equation}

Applying the
theory of Section \ref{se:ScalProfiles} with $\bbD(u)=m u^{m-1}$, we can treat
the case $m\geq 1$ and obtain for all $0\leq U_- \leq U_+ < \infty$ a unique
monotone profile $U:\R\to [U_-,U_+]$ satisfying \eqref{eq:ScalSeSimEq}. For
$m=1$ we obtain the trivial solution \eqref{eq:ExplicLinCase} in terms of the 
error function. For $m>1$ there are two cases, namely (i) $U_->0$, which implies
that $U\in \rmC^\infty(\R; [U_-,U_+])$, and   (ii) $U_-=0$. In the latter case
we have $\int_0^u \bbD(s)/s \dd s = m \int_0^u s^{m-2} \dd s = \frac m{m-1}
u^{m-1} <\infty$ which implies $U(y)=0$ for all $y\leq y^*_{-}=Y(m,U_+)<0$, see
\eqref{eq:Constancy}. It
can be shown that $U \in \rmC^{1/(m-1)}(\R; [0,U_+])$ for $m>1$ and $m\neq
1+1/k$ for $k\in \N$; for $m=1+1/k$ one obtains $U\in
\rmC^{k-1,\mafo{lip}}(\R)$. Note that \eqref{eq:Lp.U'} in Theorem
\ref{th:SeSimiDegerate} 
implies $U' \in \rmL^\infty(\R)$ for $m\in {]1,2[}$ and $U' \in \rmL^
{(m-1)/(m-2)}(\R)$ for $m>2$. \medskip

Having the self-similar profile $U$ satisfying $A(U)_{yy}+ \frac y2 U_y=0$ and
$U(\pm\infty)=U_\pm>0$ we can also establish convergence of general solutions $u$
of \eqref{eq:PDE1D}, at least in some cases. 
We also refer to \cite{VanPel77ABSN} for convergence results to self-similar
profiles, but they are quite different and rely on comparison principle
arguments, whereas we use entropy estimates that may also be extended to
vector-valued cases, see \cite{MiHaMa15UDER,MieMit18CEER,MieSch22?CSSP}.  
For this
we introduce the relative entropy 
\[
\calH_\phi(u)=\int_\R \phi(u(y)/U(y)) U(y) \dd y, \quad \text{where }
\phi''(\rho)>0, \ \phi(1)=\phi'(1)=0.
\]
Typical entropy functions are given by the family $E_p:{[0,\infty[}\to
[0,\infty]$ via 
\begin{align*}
&E_p(\rho)=\frac1{ (p {-}1) p }\big(
\rho^p-p \rho +p-1\big) \text{ \ for } p\in\R\setminus\{0,1\}, \\
& E_1(\rho) :=\rho \log\rho -\rho +1, \quad E_0(\rho) = -\log \rho +\rho-1,
\end{align*}
which is uniquely determined by the conditions $E''_p(\rho)=\rho^{p-2}$ for $\rho >0$ 
and $E_p(1)=E'_p(1)=0$. Our entropy functions will be of the form 
\begin{equation}
\label{eq:phi.Epq}
\varphi_{p,q}(\rho)= \begin{cases} E_p(\rho) & \text{for }\rho \in [0,1], \\
E_q(\rho)& \text{for } \rho\geq 1, 
\end{cases}
\end{equation}
with suitable $p$ and $q$. 

Because of the multiplicative ansatz $u=\rho U$ and $\phi(\rho)>0$ for $\rho \neq 1$
in the definition of
$\calH$, the condition $\calH_\phi(u)< \infty$ implies that $u$ has to approach the
same limits as $U$. Moreover, $\calH_\phi(u(\tau))\to 0$ for $\tau\to \infty$
implies $u(\tau)\to U$ in a suitable sense, see below.  

A direct calculation, using the shorthand $\rho=u/U$, gives 
\begin{align*}
\frac\rmd{\rmd \tau} \calH_\phi(u(\tau)) &=\int_\R \phi'(\rho) \rho_\tau U \dd y
=  \int_\R \phi'(\rho)\big( A(\rho U)_{yy} + \frac y2(\rho U)_y\big) \dd y 
\\
&= \int_\R\Big({-}\phi'(\rho)_yA(\rho U)_y + \rho_y\phi'(\rho) \frac y2U
    +  \rho \phi'(\rho) \frac y2U_y\Big) \dd y 
\\
&\overset{*}= \int_\R \Big( {-}\phi''(\rho) \rho_y A'(\rho U)(\rho U)_y
-\phi(\rho)\frac12 U + \big(\phi(\rho) {-}\rho \phi'(\rho)\big) A(U)_{yy}
\Big) \dd y  , 
\end{align*}
where we have used the profile equation to substitute $\frac y2 U_y$ by
$-A(U)_{yy}$. Integrating the last term by parts, we arrive at the identity 
\begin{align*}
\frac\rmd{\rmd \tau} \calH_\phi(u(\tau)) &=-\frac12 \calH_\phi(u) -
\int_\R \phi''(\rho)\Big( A'(\rho U)U \rho_y^2  + \rho \big(A'(\rho
U){-}A'(U)\big) \rho_y U_y\Big) \dd y .
\end{align*}
We can estimate the last term by minimizing the integrand
with respect to $\rho_y$ pointwise  and obtain 
\begin{align}
  \label{eq:calH.estim}
\frac\rmd{\rmd \tau} \calH_\phi(u(\tau)) & \leq -\frac12 \calH_\phi(u) +
\int_\R \phi''(\rho)\frac{\rho^2 U_y^2\big(A'(\rho U){-} A'(U)\big)^2}{4
  A'(\rho U) U}  \dd y .
\end{align}
In the linear case $A(u) = \delta u$ the last term vanishes and 
$\calH_\phi(u(\tau))\leq \ee^{-\tau/2}\calH_\phi(u(0))$
follows immediately. For general $A$, one can prove exponential convergence if it is
 possible to estimate the
last term by $\varkappa \calH_\phi(u)=\varkappa  \int_\R \phi(\rho) U \dd y$ for some
$\varkappa < 1/2$. 

Before the consider the important special case $A(u)=u^m$ further down below,
we consider the case where $u \mapsto A'(u)$ is globally Lipschitz continuous,
namely 
\begin{equation}
  \label{eq:A.bdd.Lip}
  \exists\, \ALPHA>0, C_\rmA>0\ \forall \, u,v\geq 0: \quad 
A'(u)\geq \ALPHA \quad  \text{ and } \quad \big| A'(u)-A'(v)\big| \leq C_\rmA |u{-}v|,
\end{equation}
and derive an exponential decay estimate.  Under a suitable flatness
condition on  $U$ we obtain a uniform exponential decay on $\calH_\phi$ for all
initial conditions. 

\begin{theorem}[Exponential decay if $|A''(u)|\leq C_\rmA$]
\label{th:ExpDecay.CA}
Consider the diffusion equation \eqref{eq:PDE1D} with general $A\in\rmC^2(\R)$
satisfying assumption \eqref{eq:A.bdd.Lip} and choose $\phi=\varphi_{1/2,-1}$. 
Assume further that the
stationary profile $U \in \rmC^1(\R;[U_-,U_+])$ from \eqref{eq:ScalSeSimEq} satisfies 
\[
U_+\geq U_->0 \quad \text{and} \quad 
\Sigma_{U,0}:=\sup\bigset{U'(y)^2 }{y\in \R} < \ALPHA/C_\rmA^2 . 
\] 
Then, all solutions $u$ of \eqref{eq:PDE1D} with $u(0,y)\geq 0$ and 
$\calH_\phi(u(0)) < \infty$ satisfy the
exponential decay estimates  
\begin{align*}
&\calH_\phi(u(\tau)) := \int_\R \varphi_{1/2,-1}\big(u(\tau,y)/U(y)\big) \,U(y) \dd y \leq
\ee^{-\Lambda}\,\calH_\phi(u(0)) \quad \text{for all } \tau >0,  
\\
&\big\|\sqrt{ u(\tau,\cdot)} - \sqrt{U(\cdot)} \big\|^2_{\rmL^2(\R)} \leq
 \ee^{-\Lambda \tau }\, \calH_{\phi}(u(0)) \text{ \ for all
 } \tau>0,
\end{align*}
where $\Lambda = \dfrac12\big( 1- C_\rmA^2\, \Sigma_{U,0}/\ALPHA \big)$. 
\end{theorem}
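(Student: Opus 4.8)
The plan is to start from the entropy-dissipation identity already derived in the excerpt, namely
\[
\frac\rmd{\rmd\tau} \calH_\phi(u(\tau)) \leq -\tfrac12 \calH_\phi(u) +
\int_\R \phi''(\rho)\,\frac{\rho^2 U_y^2\big(A'(\rho U){-}A'(U)\big)^2}{4 A'(\rho U)\,U}\,\dd y,
\]
and to show that the last integral is bounded by $\varkappa\,\calH_\phi(u)$ with $\varkappa = \tfrac12 C_\rmA^2 \Sigma_{U,0}/\ALPHA < \tfrac12$, which immediately yields $\frac\rmd{\rmd\tau}\calH_\phi \leq -\Lambda\,\calH_\phi$ with $\Lambda = \tfrac12(1-C_\rmA^2\Sigma_{U,0}/\ALPHA)$, and then Gronwall gives the first exponential estimate. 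First I would use the Lipschitz bound from \eqref{eq:A.bdd.Lip}, $|A'(\rho U)-A'(U)| \leq C_\rmA |\rho U - U| = C_\rmA U|\rho-1|$, together with the lower bound $A'(\rho U) \geq \ALPHA$ and the flatness bound $U_y^2 \leq \Sigma_{U,0}$, to estimate the integrand pointwise:
\[
\phi''(\rho)\,\frac{\rho^2 U_y^2 \big(A'(\rho U){-}A'(U)\big)^2}{4 A'(\rho U) U}
\leq \phi''(\rho)\,\frac{\rho^2 \,\Sigma_{U,0}\, C_\rmA^2 U^2 (\rho-1)^2}{4\,\ALPHA\, U}
= \frac{C_\rmA^2 \Sigma_{U,0}}{4\ALPHA}\,\phi''(\rho)\,\rho^2(\rho-1)^2\,U.
\]
So the crux is the pointwise inequality $\phi''(\rho)\,\rho^2(\rho-1)^2 \leq 2\,\phi(\rho)$ for all $\rho \geq 0$, with $\phi = \varphi_{1/2,-1}$; this is exactly what makes the specific choice of exponents $p=1/2$, $q=-1$ work. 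I expect verifying this elementary-but-delicate convexity inequality to be the main obstacle, and the reason $\phi$ is chosen precisely as $\varphi_{1/2,-1}$.

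To check that inequality I would split into the two regimes. For $\rho \geq 1$ we have $\phi = E_{-1}$, so $\phi''(\rho) = \rho^{-3}$ and $E_{-1}(\rho) = \tfrac12(\rho^{-1} - 2 + \rho) \cdot \tfrac{1}{?}$ — more precisely from $E_q''(\rho)=\rho^{q-2}$ with $q=-1$ one gets $E_{-1}(\rho) = \tfrac12\rho - 1 + \tfrac12\rho^{-1}$ after imposing $E_{-1}(1)=E_{-1}'(1)=0$; then $\phi''(\rho)\rho^2(\rho-1)^2 = \rho^{-1}(\rho-1)^2$ and $2\phi(\rho) = \rho - 2 + \rho^{-1} = \rho^{-1}(\rho-1)^2$, so the inequality holds with \emph{equality} for $\rho\geq 1$. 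For $\rho \in [0,1]$ we have $\phi = E_{1/2}$, so $\phi''(\rho) = \rho^{-3/2}$ and $\phi''(\rho)\rho^2(\rho-1)^2 = \rho^{1/2}(\rho-1)^2$, while $2\phi(\rho) = 2 E_{1/2}(\rho) = \tfrac{2}{(1/2-1)(1/2)}(\rho^{1/2} - \tfrac12\rho + \tfrac12 - 1) = -8(\rho^{1/2} - \tfrac12\rho - \tfrac12) = 8(\tfrac12 + \tfrac12\rho - \rho^{1/2}) = 4(1-\rho^{1/2})^2 + 4\rho^{1/2} - 4\rho + \cdots$; the claim $\rho^{1/2}(\rho-1)^2 \leq 4(\tfrac12+\tfrac12\rho-\rho^{1/2})$ reduces, after substituting $s=\rho^{1/2}\in[0,1]$, to $s(s^2-1)^2 \leq 2(1-s)^2(1+s) \cdot \tfrac{?}{}$, i.e.\ to an explicitly checkable polynomial inequality in $s$, which holds on $[0,1]$ (at $s=1$ both sides vanish; one compares leading behavior and checks no sign change). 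I would present this as a short lemma or an inline computation.

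Finally, for the second (Hellinger) estimate I would invoke the coercivity of the relative entropy: since $\varphi_{1/2,-1}(\rho) \geq c\,(\sqrt\rho - 1)^2$ for a universal constant $c$ (indeed $E_{1/2}(\rho) = \tfrac{1}{(1/2-1)(1/2)}(\rho^{1/2}-\tfrac12\rho-\tfrac12)\cdot(-1)$ expands near and away from $1$ as a multiple of $(\sqrt\rho-1)^2$ up to bounded factors, and similarly $E_{-1}(\rho)=\tfrac12\rho^{-1}(\sqrt\rho-1)^2(\sqrt\rho+1)^2 \geq (\sqrt\rho-1)^2$ for $\rho\geq 1$; taking $c = \tfrac14$ say works on all of $[0,\infty[$), one gets
\[
\big\|\sqrt{u(\tau,\cdot)} - \sqrt{U(\cdot)}\big\|_{\rmL^2(\R)}^2
= \int_\R \big(\sqrt{\rho}-1\big)^2 U \,\dd y
\leq \tfrac1c\int_\R \varphi_{1/2,-1}(\rho)\,U\,\dd y = \tfrac1c\,\calH_\phi(u(\tau))
\leq \tfrac1c\,\ee^{-\Lambda\tau}\calH_\phi(u(0)),
\]
and absorbing $1/c$ into the constant (or checking $c=1$ suffices with the stated normalization) finishes the proof. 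A final routine point is to justify the differentiation under the integral and the integrations by parts for solutions of \eqref{eq:PDE1D} with merely $\calH_\phi(u(0))<\infty$; I would handle this by a standard approximation/regularization argument, or simply state that the formal computation can be made rigorous for the (smooth, suitably decaying) solutions under consideration, referring to the earlier regularity discussion.
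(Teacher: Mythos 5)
Your proposal follows essentially the same route as the paper's proof: the same dissipation inequality \eqref{eq:calH.estim}, the same pointwise use of \eqref{eq:A.bdd.Lip} to reduce everything to the key inequality $\phi''(\rho)\,\rho^2(\rho-1)^2\le 2\phi(\rho)$, Gronwall, and the same comparison $(\sqrt\rho-1)^2=\tfrac12 E_{1/2}(\rho)\le \varphi_{1/2,-1}(\rho)$ for the Hellinger bound --- the only difference being that you verify the key inequality analytically (equality on $[1,\infty[$, and the reduction for $\rho\in[0,1]$ to $s(s+1)^2\le 4$ with $s=\sqrt\rho$, which is correct once you fix the factor-of-two slip in your intermediate algebra, since $2\phi(\rho)=4(1-\sqrt\rho)^2$ there), whereas the paper only points to Figure \ref{fig:EntropyCA}. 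Your caution about the constant $c$ in the final step is in fact warranted: $E_{-1}(\rho)\ge(\sqrt\rho-1)^2$ fails for $\rho>3+2\sqrt2$ because $E_{-1}(\rho)/(\sqrt\rho-1)^2=(\sqrt\rho+1)^2/(2\rho)\to 1/2$, so the constant-free inequality invoked in the paper's last display only holds with the extra factor $2$ (i.e.\ $c=1/2$) that your argument accommodates.
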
 
\begin{proof} 
We first observe that the choice $\phi=\varphi_{1/2,-1}$ leads to the estimate
\[
\phi''(\rho) \rho^2(\rho{-}1)^2 \leq 2 \phi(\rho)  \quad \text{for all }
\rho\geq 0,
\]
see Figure \ref{fig:EntropyCA}.  Using this
and \eqref{eq:A.bdd.Lip}, the estimate \eqref{eq:calH.estim} takes the form
\[
\frac{\rmd}{\rmd \tau}\calH_\phi(u) = - \frac12 \calH_\phi(u) + \int_\R 2\phi(\rho)
\,  \frac{U'(y)^2 \, C_\rmA^2 U(y)^2}{4 \, \ALPHA \, U(y)}  \dd y 
\leq   - \frac12 \big(1- \frac{C_\rmA^2}{\ALPHA}\, \Sigma_{U,0}\big)\,\calH_\phi(u). 
\]
This proves the first decay estimate, and the second follows by using 
\[
(\sqrt{u}-\sqrt{U})^2 = (\sqrt\rho - 1)^2\,U= \frac12 E_{1/2}(\rho)\, U \leq
\varphi_{1/2,-1}(\rho)\, U.  
\] 
Integration over $y\in \R$ completes the proof.
\end{proof} 
\begin{figure}
\begin{tikzpicture}
\node (p) at (0,0) {\includegraphics[width=0.4\textwidth]{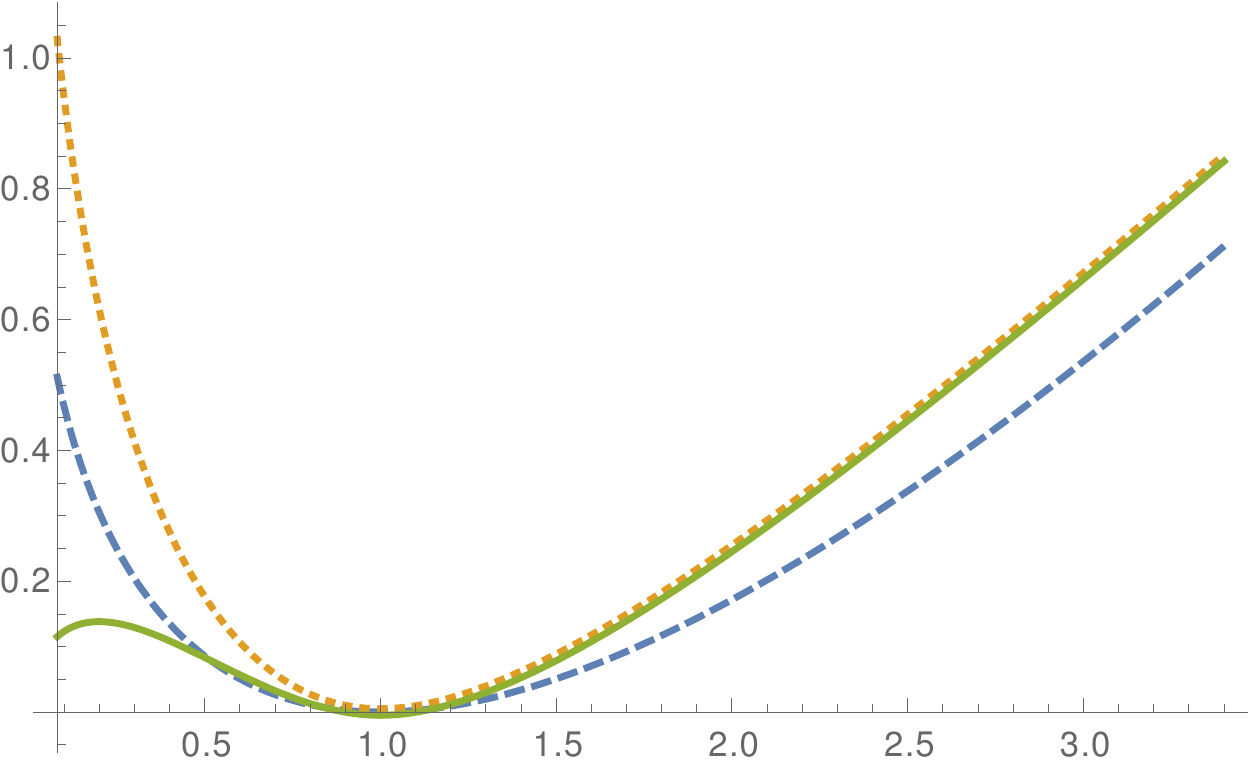}}; 
\node[color=orange!85!green] (p1) at (-1.8,1.6) {$\varphi_{1/2,-1}(\rho)$};
\node[color=teal!60!blue] (p2) at (2.4,-0.9) {$\tfrac12 E_{1/2}(\rho)$};
\node[color=olive!70!green] (p3) at (-3.7,-1.4) {$\psi(\rho)$};
\end{tikzpicture}\qquad 
\begin{minipage}[b]{0.35\textwidth}
\caption{The function $\psi(\rho)=\frac12\phi''(\rho)\rho^2(\rho{-}1)^2$ (full
  line, green) is 
  less or equal to $\phi=\varphi_{1/2,-1}$ (dotted orange). Moreover,
  $\varphi_{1/2,-1}$ lies above $\frac12 E_{1/2}(\rho)=(\sqrt\rho-1)^2$ (dashed, blue). 
  \vspace{0.4cm}  \mbox{} } 
\label{fig:EntropyCA}
\end{minipage}
\end{figure}

As a second example we restrict to the case $A(u)=u^m$, which leads to a strong
simplification because the integrand in the last term in \eqref{eq:calH.estim}
can be factored in the form $\phi''(\rho) B_m(\rho) U_y^2 U^{m-2}$ for some
$B_m$.  Proceeding as for the last result we find the following decay
estimates.

\begin{theorem}[Convergence in the PME $A(u)=u^m$]
\label{th:PME.Cvg}
Consider the porous medium equation \eqref{eq:PDE1D} with $A(u)=u^m$ for $m\geq
1$ and let $U \in \rmC^0(\R;[U_-,U_+])$ denote
the similarity profile satisfying \eqref{eq:ScalSeSimEq} and
\begin{equation}
  \label{eq:PME.Sigma.Cond}
  U_+\geq U_->0 \quad \text{and} \quad 
\Sigma_{U,m}= \sup\bigset{U'(y)^2 U(y)^{m-2}}{ y \in \R} < \frac1{m(m{-}1)^2}.
\end{equation}
Then, choosing the entropy density function 
\[
\phi_m:=\varphi_{p_m,q_m} \quad \text{with } p_m= \max\{1/2, m{-}1\} \text{ and }
q_m=\min\{ 1/2, 2{-}m\},
\]
all solutions of \eqref{eq:PDE1D} with $u(0,y)\geq 0$ satisfy the global decay
estimates 
\begin{align*}
&\calH_{\phi_m}(u(\tau)) \leq
\ee^{-\Lambda}\,\calH_{\phi_m} (u(0)) \quad \text{for all } \tau >0,  
\\
&\big\|\sqrt{ u(\tau,\cdot)} - \sqrt{U(\cdot)} \big\|^2_{\rmL^2(\R)} \leq 
 \ee^{-\Lambda \tau }\,\wh C_m\, \calH_{\phi}(u(0)) \text{ \ for all
 } \tau>0,
\end{align*}
where $\Lambda = \dfrac12\big( 1- m(m{-}1)^2 \Sigma_{U,m} \big)$ and
$\wh C_m:= \sup\bigset{(\sqrt r-1)^2/\phi_m(r)}{ 0<r\neq 1} <\infty$. 
\end{theorem}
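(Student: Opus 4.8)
The plan is to follow the same entropy-dissipation route used for Theorem~\ref{th:ExpDecay.CA}, but now exploiting the special algebraic structure of $A(u)=u^m$ to factor the dissipation defect. First I would start from the general identity \eqref{eq:calH.estim} and substitute $A'(u)=m u^{m-1}$, so that with the shorthand $\rho=u/U$ the last (bad) term becomes
\[
\int_\R \phi''(\rho)\,\frac{\rho^2 U_y^2\, m^2\big((\rho U)^{m-1}-U^{m-1}\big)^2}{4\, m (\rho U)^{m-1} U}\,\dd y
= \int_\R \phi''(\rho)\,\frac{m\,\rho^2(\rho^{m-1}-1)^2}{4\,\rho^{m-1}}\; U_y^2\, U^{m-2}\,\dd y .
\]
This is the key factorization: the profile-dependent part is exactly $U_y^2 U^{m-2}$, which is controlled by $\Sigma_{U,m}$ in \eqref{eq:PME.Sigma.Cond}, and the remaining $\rho$-dependent part is a pure function $B_m(\rho):=\phi''(\rho)\,m\,\rho^{2}(\rho^{m-1}-1)^2/(4\rho^{m-1}) = \tfrac{m}{4}\,\phi''(\rho)\,\rho^{3-m}(\rho^{m-1}-1)^2$.

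The crux of the proof is then the pointwise inequality
\[
B_m(\rho)\ \le\ \tfrac{m(m-1)^2}{2}\;\phi_m(\rho)\qquad\text{for all }\rho\ge 0,
\]
with the specific choice $\phi_m=\varphi_{p_m,q_m}$, $p_m=\max\{1/2,m-1\}$, $q_m=\min\{1/2,2-m\}$. This is the analogue of the inequality $\phi''(\rho)\rho^2(\rho-1)^2\le 2\phi(\rho)$ used in the previous theorem (which is the case $m=2$, $p_2=q_2=1/2$). I would verify it by splitting into $\rho\in[0,1]$ and $\rho\ge1$: on each piece $\phi_m$ is one of the explicit $E_p$'s with $E_p''(\rho)=\rho^{p-2}$, so $B_m$ and $\phi_m$ are elementary and the ratio $B_m(\rho)/\phi_m(\rho)$ can be analyzed — the choices $p_m,q_m$ are precisely tuned so that the ratio stays bounded by $m(m-1)^2/2$ as $\rho\to\infty$ (governed by $q_m$, since $\rho^{3-m+q_m-2}\cdot\rho^{2(m-1)}$ vs $\rho^{q_m}$ forces $q_m\le 2-m$) and as $\rho\to0^+$ (governed by $p_m$; near $\rho=0$, $(\rho^{m-1}-1)^2\to1$ for $m>1$, so $B_m\sim\rho^{p_m+1-m}$ vs $\phi_m\sim\rho^{p_m}$, which is fine, while near $\rho=1$ both vanish to second order with matching Taylor coefficients up to the constant $m(m-1)^2/2$). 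The normalization at $\rho=1$ comes from $B_m(\rho)=\tfrac{m}{4}(m-1)^2(\rho-1)^2+O((\rho-1)^3)$ and $\phi_m(\rho)=\tfrac12(\rho-1)^2+O((\rho-1)^3)$, giving ratio $\to m(m-1)^2/2$, consistent with Figure~\ref{fig:EntropyCA} in the case $m=2$.

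Granting this inequality, I would plug it into \eqref{eq:calH.estim}:
\[
\frac{\rmd}{\rmd\tau}\calH_{\phi_m}(u)\ \le\ -\tfrac12\calH_{\phi_m}(u) + \tfrac{m(m-1)^2}{2}\,\Sigma_{U,m}\int_\R \phi_m(\rho)\,U\,\dd y
\ =\ -\tfrac12\big(1-m(m-1)^2\Sigma_{U,m}\big)\,\calH_{\phi_m}(u)\ =\ -\Lambda\,\calH_{\phi_m}(u),
\]
and Grönwall's lemma gives the first decay estimate. For the second, I would use that $\varphi_{p_m,q_m}(\rho)\ge c\,(\sqrt\rho-1)^2$ with $c=1/\wh C_m$, where $\wh C_m=\sup_{0<r\ne1}(\sqrt r-1)^2/\phi_m(r)<\infty$ — finiteness holding because $\phi_m$ vanishes to exactly second order at $\rho=1$ (so the ratio has a finite limit there) and grows at least like a positive power at $0$ and $\infty$ while $(\sqrt\rho-1)^2$ grows only linearly. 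Then $\big\|\sqrt{u(\tau,\cdot)}-\sqrt{U(\cdot)}\big\|^2_{\rmL^2}=\int_\R(\sqrt\rho-1)^2U\,\dd y\le \wh C_m\calH_{\phi_m}(u(\tau))\le \wh C_m\,\ee^{-\Lambda\tau}\calH_{\phi_m}(u(0))$, as claimed. The main obstacle is the verification of the pointwise inequality $B_m(\rho)\le\tfrac{m(m-1)^2}{2}\phi_m(\rho)$ for all $m\ge1$ with the stated $p_m,q_m$ — this is where the careful choice of the two-sided entropy pays off, and the asymptotic balancing at $0$ and $\infty$ together with the second-order match at $\rho=1$ has to be checked honestly (presumably reducing, after dividing out, to showing a one-variable polynomial/rational expression is nonnegative on each of $[0,1]$ and $[1,\infty)$). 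A minor additional point worth a remark is that for $m\in{]1,2[}$ the profile $U$ from Theorem~\ref{th:SeSimiDegerate} may only be Hölder (and may touch $U_-=\cdots$ but here $U_->0$ is assumed, so $U^{m-2}$ is bounded), so $\Sigma_{U,m}$ is finite precisely when $U'\in\rmL^\infty$, which indeed holds by the $\rmL^p$ bound noted after \eqref{eq:PDE1D}.
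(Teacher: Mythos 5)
Your proposal is correct and follows essentially the same route as the paper: the factorization of the dissipation defect into $\phi_m''(\rho)\,\tfrac{m}{4}\rho^{3-m}(\rho^{m-1}-1)^2\cdot U_y^2U^{m-2}$, the pointwise inequality $B_m(\rho)\le\tfrac{m(m-1)^2}{2}\phi_m(\rho)$ (which the paper merely asserts for the stated $p_m,q_m$, whereas you sketch its verification via the asymptotics at $0$, $1$, $\infty$), Gr\"onwall, and the lower bound $\phi_m(\rho)\ge(\sqrt\rho-1)^2/\wh C_m$ for the Hellinger estimate. The only slip is cosmetic: as $\rho\to0^+$ one has $\phi_m(\rho)=E_{p_m}(\rho)\to1/p_m>0$ rather than $\phi_m\sim\rho^{p_m}$, which only makes the inequality easier there and does not affect the argument.
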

\noindent\begin{proof} We proceed exactly as in the previous proof. The choice
  $\phi_m=\varphi_{p_m,q_m}$ yields 
\[
\phi''_m(\rho) \frac{m(\rho^{m-1}{-}1)^2}{4\,\rho^{m-3}} \leq
\frac{m(m{-}1)^2}{2} \, \phi_m(\rho). 
\]
With this and the definition of $\Sigma_{U,m}$, we arrive at 
\[
\frac{\rmd}{\rmd \tau} \calH_{\phi_m}(u(\tau)) \leq  - \frac12 \calH_\phi(u) + \int_\R
\frac{m(m{-}1)^2}{2} \, \phi_m(\rho) 
\Sigma_{U,m} \,U(y) \dd y = - \Lambda \calH_{\phi_m}(u(\tau)). 
\]
This proves the first estimate, and the second follows by the definition of $\wh C_m$.
\end{proof}

Next we show that the second 
condition on $U$ imposed in \eqref{eq:PME.Sigma.Cond} can be controlled by the
estimates obtained in Section \ref{se:ScalProfiles}. With $Q=A'(U)U'=m U^{m-1}
U'$ we have
\[
\Sigma_{U,m}=\sup_{y\in \R}\: (U')^2 U^{m-2} = \sup_{y\in \R}\: \frac{Q^2}{m^2
  U^m}\leq \frac{D^*(U_+{-}U_-)^2/8}{m^2 U_-^m} 
\leq \frac{U_+^{m-1}}{8m U_-^{m} } \,\big(U_+- U_-\big)^2,   
\]
where we used the monotonicity $U_-\leq U(y) \leq U_+$ from Lemma
\ref{le:ScalMonot} and the flux estimates $Q(y)\leq Q(0)$ and $Q(0)^2 \leq
D^*(U_+{-}U_-)^2/8$ from Proposition \ref{pr:Estim.unif.eps} and Corollary
\ref{co:BoundsQ0U0}, respectively. In particular, we see that for the linear
case $m=1$ there is no restriction at all, whereas for $m>1$ there is always a
range of $0<U_-<U_+$ that is allowed including the constant case 
arising from $U_+=U_->0$. \medskip

We now return to the general case of a monotone relation $u\mapsto A(u)$ and
observe that the integral relations \eqref{eq:Rel.U.olUpm} for the similarity
profiles $U$ obtained in Theorem \ref{th:VectValProf} lead to simple relations
for all solutions $u$ of the diffusion equation \eqref{eq:PDE1D}, namely
\begin{align*}
& \frac\rmd{\rmd \tau} \int_\R\!\big( u(\tau,y) - \ol u_\pm(y) \big) \dd y = -
\frac12 \int_\R\!\big( u(\tau,y) - \ol u_\pm(y) \big) \dd y,
\\
& \frac\rmd{\rmd \tau} \int_\R\!y \big( u(\tau,y) - \ol u_\pm(y) \big) \dd y =
 A(U_+) -A(U_-) -  \int_\R\! y \big( u(\tau,y) - \ol u_\pm(y) \big) \dd y. 
\end{align*}
Moreover, the linearization of the diffusion equation \eqref{eq:PDE1D}
around $u=U$ leads to the equation 
\[
v_\tau = \bbL_U\,v:= \big(A'(U)v\big)'' + \frac y2 v', \quad v(\pm\infty)=0.
\] 
It can be easily checked that the linear operator $\bbL_U$ has the eigenvalues
$\lambda_1=-\frac12$ and $\lambda_2=-1$ with the corresponding eigenfunctions
$V_1(y)= U'(y)$ and $V_2(y)=y U'(y)$, namely
\[
\bbL_U U'=-\frac12 U' \quad\text{and}\quad \bbL_U (y U')= -y U'.
\]
Finally, the adjoint operator $\bbL^*_U$ is given via $\bbL^*_U w =
A'(U)w'' - \big(\frac y2 w\big)'$ and has the corresponding eigenfunctions
$W_1(y)=1$ and $W_2(y)=y$. 

Based on this, we conjecture that the global stability obtained in Theorem
\ref{th:PME.Cvg} can be improved via a local
stability analysis. Without loss 
of generality we can assume that the initial condition $u(0,\cdot)$ satisfies 
\[
\int_\R\!\big( u(0,y) - \ol u_\pm(y) \big) \dd y = 0 
\quad\text{and} \quad 
 \int_\R\! y \big( u(0,y) - \ol u_\pm(y) \big) \dd y =  A(U_+) -A(U_-).
\]
This can always be achieved by a suitable translation and scaling $\wt
u(0,y)=u(0,\mu (y{-}y_0))$. Then, for initial conditions $u(0)$ close to $U$ 
one can expect a decay with a decay rate
  close to the third eigenvalue $\lambda_3<\lambda_2=-1$, i.e.\ 
\[
\| u(\tau)-U\|_Y \leq C\ee^{-(1+\delta)\tau} \| u(0){-}U\|_Y \quad \text{for
  all }\tau>0
\]
whenever $ \| u(0){-}U\|_y \leq \delta\ll 1$, where $Y$ is a suitably chosen
Banach space. 

Such results could then be transformed back into the physical variables $t=
\ee^{\tau} -1$ and $x= \ee^{\tau/2} y$ for obtaining algebraic decay, e.g.\ in
$\rmL^1(\R_x)$, see \cite{VanPel77ABSN,Bert82ABSN} for related results in this
direction.

\section{Diffusion systems with reaction constraints}
\label{se:ApplRDS}

We consider one-dimensional RDS for nonnegative concentration vectors
$\bfc(t,x)\in {[0,\infty[}^{i_*}$ with reaction of mass-action law satisfying the
detailed-balance condition 
\begin{equation}
  \label{eq:RDSgen}
  \bfc_t = \big( \bbD(\bfc) \bfc_x \big)_x + \bfR(\bfc)  \quad \text{with }
\bfR(\bfc)= \sum_{r=1}^{r_*}k_r \big(\frac{\bfc^{\bfalpha^r}}{\bfw^{\alpha^r}}
- \frac{\bfc^{\bfbeta^r}}{\bfw^{\beta^r}} \big) \Big( \bfbeta^r-\bfalpha^r\Big)
\in \R^{i_*}, 
\end{equation}
where $\bfw=(w_i)_i\in {]0,\infty[}^{i_*}$ denotes the nonnegative equilibrium
state. Using parabolic coordinates
\[
y=x/\sqrt{t{+}1} \quad \text{and} \quad \tau=\log(t{+}1),
\]
the transformed system takes the form
\begin{equation}
  \label{eq:RDS.scaled}
  \bfc_\tau = \big( \bbD(\bfc) \bfc_y \big)_y+ \frac y2 \bfc_y  +\ee^\tau
  \bfR(\bfc), 
\end{equation}
where now an exponential factor occurs in front of the reaction terms because
they do not scale in the same way as the parabolic terms. We leave the analysis
of the model involving the growing term $\ee^\tau$ to the work
\cite{MieSch22?CSSP} and restrict here to the simpler case with full
invariance. 

A scaling invariant problem is obtained by setting $\ee^\tau$ formally to
$+\infty$, i.e.\ we assume that the reactions $\bfR$ are already in
equilibrium, whereas the spatial diffusion is much slower and still allows for
diffusive fluxes, see the constrained system \eqref{eq:scRDS.Constrain} below.

\subsection{The formally reduced system with reaction constraint}
\label{su:ReducedSyst}

In the parabolic scaling the reactions must be considered very fast. In
a first non-rigorous approximation we can follow the standard argument in
chemical modeling and assume that for all $\tau$ and $y$ the concentration
vector $\bfc(\tau,y)$ is always in equilibrium (i.e.\ $\bfR(\bfc(\tau,y))=0$),
but the equilibrium may still depend on $\tau$ and $y$ and will equilibrate
spatially by diffusion only. We refer to \cite{Both03ILRC, MiPeSt21EDPC,
  PelRen20?FRLG, Step21CGED} for some recent works justifying the limit of
infinitely fast reactions in slow-fast systems.

To describe the set of all equilibria, we assume that the dimension $\gamma_*$
of the stoichiometric subspace 
$\Gamma:=\mafo{span}\bigset{\bfbeta^r-\bfalpha^r}{r=1,...,r_*}$ is less than $i_*$,
which implies that $\bfR(\bfc)=0$ has a nontrivial family of solutions, which
contains the equilibrium $\bfw$. By arguments from standard linear algebra we
can construct  a matrix $\bbQ \in \R^{m_*\ti i_*}$ with $m_*=i_*-\gamma_*$, such that
$\Gamma=\mafo{ker}\,\bbQ$ and $\mafo{range}\,\bbQ^\top = \Gamma^\perp$. Thus, by
construction we have $\bbQ\bfR(\bfc)=0$ for all $\bfc$.

We introduce the so-called slow variables via
\[
\bfu(t,x)=\bbQ \, \bfc(t,x). 
\]
Following \cite{MieSte20CGED, MiPeSt21EDPC}, the detailed-balance condition can
be exploited to characterize the set of all steady
states. For this we introduce
\[
\mfC:={[0,\infty[}^{i_*}, \quad  \mfU:= \bbQ \mfC, \quad 
\calE(\bfc):= \sum_{i=1}^{i_*} w_i\LB(c_i/w_i) \text{ with } \LB(r)=r\log r - r+1.
\]
The following is shown in \cite[Sec.\,3.3]{MiPeSt21EDPC}:
\begin{align*}
   \bigset{\bfc\in \mfC }{ \bfR(\bfc)=0} &=
\mafo{clos}\big(\bigset{\bfc \in {]0,\infty[}^{i_*} }{\exists\, \mu\in
  \R^{m_*}{:}\  \big(\log(c_i/w_i)\big)_i =\bbQ^\top \mu}  \big)  \\
& = 
\Bigset{\bfc\in \mfC}{ \exists\,\bfu\in \mfU{:}\ \bfc \text{
      minimizes }\calE \text{ subject to } \bbQ\bfc=\bfu }.
\end{align*}
Moreover, it is shown in \cite[Prop.\,3.6]{MiPeSt21EDPC} that there is a
continuous map $\Psi:\mfU\to \mfC$ such 
that 
\[
\Psi(\bfu)=\mafo{arg\;\!min}\Bigset{ \calE(\bfc) }{\bbQ \bfc =\bfu} \quad \text{and} \quad
\bfu= \bbQ\Psi(\bfu).  
\] 
The last relation is a direct consequence from the definition of $\Psi$.

Returning to the parabolically scaled RDS in \eqref{eq:RDS.scaled}, the
exponentially growing prefactor $\ee^\tau$ forces the reactions to equilibrate
very fast, see \cite{GalSli22DREE} where the rate $(1{+}t)^{-1/2}=\ee^{-\tau/2}$ is
established.  Thus we may assume that for $\tau\gg 1$ we always have
$\bfc(\tau,y)\approx \Psi(\bfu(\tau,y))$. Moreover we may apply the linear operator
$\bbQ$ to the equation and, using $\bfu=\bbQ\Psi(\bfu)$ we obtain the reduces problem
\begin{equation}
  \label{eq:DiffSys.scaled}
  \bfu_\tau = \Big( \bbQ \:\bbD\big(\Psi(\bfu)\big)\: \Psi(\bfu)_y \Big)_y +
  \frac y2 \bfu_y \quad 
  \text{for }\bfu(\tau,y)\in \mfU\subset \R^{m_*}.
\end{equation}
Note that the reactions have disappeared completely because of
$\bbQ\bfR\equiv 0$, but we also have assumed that they are equilibrated, i.e.\
$\bfc=\Psi(\bfu)$. In summary, we are left with a pure diffusion problem in
parabolic scaling variables. 

This system can equivalently be formulated in terms of the original
concentration vector $\bfc$ as follows:
\begin{equation}
  \label{eq:scRDS.Constrain}
  \bfc_\tau= \big(\bbD(\bfc) \bfc_y \big)_y + \frac y2 \bfc_y + \bflambda,
  \quad \bbQ\bflambda =0, \quad \bfR(\bfc)=0 \qquad \text{for }\tau>0,\ y\in \R. 
\end{equation}
Here $\bflambda \in \Gamma$ arises via the limit $\ee^\tau \bfR(\bfc) \to$
``$\infty \, \bm0$'' and is, thus, a remainder of the much faster reactions. 
Mathematically  $\bflambda \in \Gamma$ can be understood as a Lagrange
multiplier corresponding to the algebraic constraint $\bfR(\bfc)=0$.\medskip

If $\bbD$ is independent of $\bfc$, we can define $\bfA(\bfu)=
\bbQ\bbD\Psi(\bfu)$ and observe that steady states of \eqref{eq:DiffSys.scaled}
have to satisfy our profile equation 
\[
\bfA(\bfU)'' + \frac y2 \bfU'=0 \ \text{ for } y\in \R, \qquad \bfU(y)\to
\bfU_\pm \text{ for } y\to \pm \infty.
\]
If we find such a profile $\bfU:\R\to \mfU \subset \R^{m_*}$, it gives rise to a
stationary profile $\bfC:\R \to \mfC\subset \R^{i_*}$ via
$\bfC(y):=\Psi(\bfU(y))$ which then is a steady state of the constrained diffusion
system \eqref{eq:scRDS.Constrain}.
In \cite{MieSch22?CSSP} cases are discussed in which it is possible to
show that all solutions $\bfc(\tau,\cdot)$ of the full scaled reaction-diffusion
system \eqref{eq:RDS.scaled} satisfying $\bfc(0,y) \to
\Psi(\bfU_\pm)$ for $y \to \pm\infty$ converge to $\bfC$
for $\tau\to \infty$. 

In this work we restrict the discussion to the existence
question for the self-similar profiles $\bfU:\R\to \mfU\subset \R^{m_*}$ and
hence of $\bfC: \R \to \mfC\subset \R^{i_*}$. The profiles $\bfC$ provide exact
self-similar solutions to the unscaled constrained system   
\begin{equation}
  \label{eq:unRDS.Constrain}
  \dot \bfc= \big(\bbD\, \bfc_x \big)_x  + \bflambda,
  \quad \bbQ\bflambda =0, \quad \bfR(\bfc)=0 \qquad \text{for }t>0,\ x\in \R. 
\end{equation}
Because of the nonlinear constraint $\bfR(\bfc)=0$, this is a quasilinear
system. 

In light of the analysis in \cite{GalSli22DREE} and \cite{MieSch22?CSSP}, it is
to be expected that the solutions of the full reaction-diffusion system
\eqref{eq:RDSgen} with the additional boundary conditions $\bfc(t,\pm\infty)=
\Psi(\bfU_\pm)$ behave asymptotically self-similar as well. But such results are
beyond the scope of this work.  

\subsection{Linear reaction-diffusion systems}
\label{su:LinearRDS}

We consider a linear reaction-diffusion system of the form 
\[
\bfc_t = \bbD \bfc_{xx} + \bbB \bfc \quad \text{for }\bfc(t,x) \in \mfC \subset \R^{i_*}.
\]
Here $\bbD=\mafo{diag}(d_i)_{i=1,...,i_*}$ and $\bbB$ is obtained from a
detailed-balance system as in \eqref{eq:RDSgen}, i.e.\ all stoichiometric
vectors $\bfalpha^r$ and $\bfbeta^r$ are unit vectors
$\bigset{\bfe_j}{j=1,...,i_*}$. It is shown in \cite[Sec.\,2]{MieSte20CGED}
that the operator $\bbQ\in \R^{m_*\ti i_*}$ can be constructed such that each
column is a unit vector and that $\Psi(\bfu)=\bbN \bfu$ with $\bbN\in \R^{i_*\ti m_*}$
have nonnegative entries such that each column sum equals 1. In particular, one
has 
\[
\bbQ\bbB=0, \qquad \bbQ \,\bbN = I_{m_*} \in \R^{m_*\ti m_*} \quad \text{and} \quad
\bbN\,\bbQ \in \R^{i_*\ti i_*} \text{ is a projection}.
\]
Thus, for this special case the reduced RDS for $\bfu=\bbQ\bfc$ in scaling
coordinates takes the 
form
\[
 \bfu_\tau = \bbA \bfu_{yy} + \frac y2 \bfu_y \quad \text{with }\bbA = \bbQ
 \bbD\bbN. 
\]
If $\bbD$ is diagonal, then it can be shown that $\bbA$ is also diagonal,
containing the effective diffusion constants for the components $u_m$, cf.\
\cite{Step21CGED}. However, if $\bbD$ is nondiagonal but still positive
semidefinite, then $\bbA$ may non longer be monotone, i.e.\ $\bbA{+}\bbA^\top$
is no longer positive semidefinite. For
instance consider
\[
\bbD=\bma{ccc}d_1&\delta&0\\ \delta&d_2&0\\ 0&0&d_3 \ema ,\ 
\bbQ=\bma{ccc}1&0&0\\0&1&1\ema, \ \bbN=\bma{cc}1&0\\0&\nu\\ 0&1{-}\nu\ema
,\ \bbA= \bma{cc}d_1&\nu\delta\\ \delta &\nu d_2{+}(1{-}\nu)d_3 \ema
\]
with $d_1=d_2=1$, $\delta=1/2$, $\nu= d_3=1/50$. 
Thus, monotonicity is not obtained automatically. 

\subsection{Linear diffusion systems without reactions} 
\label{su:LinSystems}

We consider now linear systems without reactions of the form 
\[
\bfu_t = \bbA \bfu_{xx} \text{ for } (t,x)\in {]0,\infty[}\ti \R, \quad
\bfu(t,\pm\infty) = \bfU_\pm.
\] 
Here $\bbA$ is a monotone matrix, i.e.\ $\bfw\vdot \bbA\bfw\geq \ALPHA |\bfw|^2
\geq 0$. Of course,  the parabolically scaled equation reads 
\begin{equation}
  \label{eq:LinPDE.scal}
\bfu_\tau = \bbA \bfu_{yy} + \frac y2 \bfu_y \text{ for } (t,y)\in
{]0,\infty[}\ti \R, \quad \bfu(t,\pm\infty) = \bfU_\pm.  
\end{equation}
As explained in Example \ref{ex:LinVect}, there is always a unique similarity
profile $\bfU$ for any pair $(\bfU_-,\bfU_+)\in \R^m\ti \R^m$. 

By classical energy estimates using the monotonicity of $\bbA$ one obtains
convergence towards the stationary profile by linearity. If $\bfu$ is a general
solution of \eqref{eq:LinPDE.scal}, then the difference
$\bfw(\tau,y)=\bfu(\tau,y)- \bfU(y) $ is a solution as well. Hence, we obtain
\begin{align*}
\frac12\frac\rmd{\rmd\tau} \|\bfw\|_{\rmL^2}^2 &= \int_\R \bfw\vdot \bfw_\tau =
\int_\R\bfw\vdot\big(\bbA \bfw_{yy}+\frac y2\bfw_y \big) \dd y \\
& = \int_\R\big( {-} \bfw_y \vdot \bbA \bfw_y - \frac14 |\bfw|^2\big) \dd y  \
\leq \ - \frac14 \|\bfw\|_{\rmL^2}^2.
\end{align*}
Thus, Gronwall's estimate yields 
\[
\| \bfu(\tau){-}\bfU\|^2_{\rmL^2} \leq \ee^{-\tau/2} \|
\bfu(0){-}\bfU\|^2_{\rmL^2}.
\]

We emphasize that this estimate is even true in the case of the linear
Schrödinger equation $\ii \psi_t = \psi_{xx}$ which can be realized as a real
system with $\bfu=(\Re\psi,\Im\psi)$ and $\bbA=\binom{0\ -1}{1\ \ 0}$, see
Example \ref{ex:LinVect}(III).

\subsection{One reaction for two species}
\label{su:OneReact}

In \cite{GalSli22DREE, MieSch22?CSSP} the following system of two equations is
studied in detail:
\[
\binom{\dot c_1}{\dot c_2} = \binom{d_1\, \pl_x^2c_1}{d_2 \,\pl_x^2 c_2} 
 - \kappa  \binom{\gamma\,(c_1^\gamma - c_2^\beta)}{\beta\,( c_2^\beta -
   c_1^\gamma)} \qquad \text{for } t>0 \text{ and } x \in \R. 
\]
The two concentrations $c_1, c_2\geq 0$ for the species $X_1,X_2$ 
diffusive with diffusion constants $d_j$ and
undergo the reversible mass-action reaction $\gamma X_1 \rightleftharpoons
\beta X_2$.   

The scaled and constrained system \eqref{eq:scRDS.Constrain} 
takes the form
\[
\pl_\tau \binom{c_1}{c_2} = \binom{d_1\, \pl_y^2c_1}{d_2\, \pl_y^2 c_2} + \frac y2
\pl_y\binom{c_1}{c_2} +  \lambda \binom{\gamma}{-\beta}, \quad \lambda \in \R,
\quad c_1^\gamma = c_2^\beta .
\]
The set of equilibria for $\bfR$ is a one-parameter family given by 
\[
\bigset{\bfc\in \mfC}{ \bfR(\bfc)=0} = \bigset{(A^\beta,A^\gamma)}{ A\geq 0}.
\]

We consider the stoichiometric mapping $\bbQ=\big( \beta\ \ \gamma\big) \in \R^{1\ti 2}$
defining $u=\beta c_1+ \gamma c_2\geq 0$. The function
$\Psi:{[0,\infty[}\to{[0,\infty[}^2$ 
is defined via 
\[
\bfc=\Psi(u)=\binom{\psi_1(u)}{\psi_2(u)}    \quad
\Longleftrightarrow \quad \big(\  u=\bbQ\bfc=\beta c_1{+}\gamma c_2 \text{ and } 
c_1^\gamma  = c_2^\beta \ \big).
\]
The case $ \gamma = \beta $ leads to the simple relation $\Psi(u) = 
\frac1{\beta{+}\gamma} \binom uu$.  
If $ \beta \neq \gamma $, we may assume $\beta < \gamma$ without loss of
generality. Then, 
\begin{align*}
  & \psi_1 \text{ is concave}, && \psi_1 (u)= u/\beta+
  \mafo{h.o.t.}_{u\to 0^+},&& \psi_1 (u) = \big(u/\gamma\big)^{\beta/\gamma} 
   +  \mafo{l.o.t.}_{u\to \infty},
  \\
  & \psi_2 \text{ is convex}, 
  && \psi_2 (u)=\big(u/\beta\big)^{\gamma/\beta} + \mafo{h.o.t.}_{u\to 0^+}, 
 && \psi_2 (u) = u/\gamma +  \mafo{l.o.t.}_{u\to \infty} .
\end{align*} 
For $A_\Psi(u):= \bbQ\bfD \Psi(u)=\binom{\beta d_1}{\gamma d_2} \vdot \Psi(u)$ we can
use $0<\psi'_1 (u) \leq \psi'_1(0)=1/\beta$ and
$0<\psi'_2(u) \leq \psi'_2(\infty) = 1/\gamma$. This yields
\[
\bbD(u)=A'_\Psi(u)\in [D_*,D^*] ,\quad \bbD(u)\to d_1 \text{ for }u\to 0^+, \quad 
\bbD(u)\to d_2 \text{ for } u\to \infty,
\]       
where $D_*=\min\{d_1,d_2\}$ and $D^*=\max\{d_1,d_2\}$.

Thus, the theory of Section \ref{se:ScalProfiles} applies (simply extend $\bbD$
by $\bbD(u)=d_1$ for $u\leq 0$). We are in the
nondegenerate case, where the resulting profiles $U$ solving 
\[
\big(A_\Psi(U)\big)'' + \frac y2 \, U'=0 \ \text{ on } \R, \quad U(\pm\infty)=U_\pm,
\]
are smooth, strictly monotone and converge to its two limits like the error
function. In addition to $U_- \leq U(y) \leq U_+$ the estimate
\begin{equation}
  \label{eq:U'Estim}
  0 \leq U'(y) \leq \ee^{-y^2/(4D^*)} \sqrt{\tfrac{\ds D^*}{\ds 8D_*^2}}\, \big( U_+ -
U_-\big) \quad \text{for all }y \in \R
\end{equation}
holds, even in the
case $U_-=0$, where asymptotically the concentrations vanish, viz.\ 
$\bfC_-=\Psi(U_-)=\binom00$, because the
effective diffusion is still bounded from below by $D_*>0$.    

Of course, a profile $U:\R\to [U_-,U_+]$ for the reduced equation leads to a smooth
concentration profile $\bfC:\R\to \mfC\subset \R^2$ given by
$\bfC(y)=\Psi(U(y))$
and satisfying the profile equation  
\begin{align*}
&0= \bma{cc}d_1&0\\ 0&d_2\ema \bfC''(y)+ \frac y2 \,\bfC'(y) + \Lambda_U(y)
\binom\gamma{-\beta}, \quad C_1(y)^\gamma=C_2(y)^\beta, \\
&
 \bfC(y)\to \Psi(U_\pm) \text{ for }y\to \pm\infty.
\end{align*}

\begin{remark}\label{re:U''+y2U'}
In \cite{MieSch22?CSSP} the convergence to the asymptotic steady state $y
\mapsto \bfC(y)= \Psi(U(y))$ for the scaled reaction-diffusion system
\eqref{eq:RDS.scaled} is investigated. For this, it is necessary to bound the
Lagrange multiplier
\[
\Lambda_{U}(y):=- \frac1{\gamma}\Big(d_1 C''_1(y) + \frac y2 C'_1(y)\Big) =
\frac1{\beta }\Big(d_2 C''_2(y) + \frac y2 C'_2(y)\Big) \ \text{ in } \rmL^\infty(\R),
\]
where the second identity holds by construction
from $A_\Psi(u)=\beta d_1 \psi_1(u)+\gamma d_2 \psi_2(u)$. Using the relation 
$C_1(y)= \psi_1(U(y))$, where $\psi_1:{]0,\infty[} \to {]0,\infty[}$ is
$\rmC^\infty$, and exploiting the bounds $0<U_- \leq U(y) \leq U_+$, the identity
$U''=-\big( A''(U) (U')^2 + \frac y2 U'\big)/A'(U)$, and estimate
\eqref{eq:U'Estim}, we obtain the following result.  Fixing $d_1,d_2>0$ and
$\gamma > \beta>0$, for every $M>0$ there exists a constant $C_M>0$ such that
\[
U_+,U_-\in [1/M,M] \text{ \ implies} \quad 
\big| \Lambda_{U}(y) \big| \leq C_M \big|U_+-U_-\big| \text{ \ for all }
y\in \R. 
\] 
\end{remark}
\subsection{One reaction for three species}
\label{su:OneReact3Spec}

We consider the classical binary reaction $X_3 \rightleftharpoons  X_1 {+} X_2$ leading
to the scaled {\setlength{\arraycolsep}{0.12em} RDS 
\[
\pl_\tau \bfc 
 = \bfD \pl_{y}^2 \bfc   
   + \frac y2  \pl_y \bfc 
  - \ee^\tau\,\big(c_1c_2 {-} c_3\big) \ThreeVect11{-1}\quad \text{with } \bfD
  = \bma{ccc} d_1&0&0\\[-0.2em] 0&d_2&0\\[-0.2em] 0&0&d_3\ema.
\]
The} associated profile equation reads
\begin{equation}
  \label{eq:ProfEqn3Spec}
  \bfD \bfC''(y) + \frac y2  \bfC'(y) + \lambda(y) \ThreeVect11{-1}  =0, \ 
    C_1(y)C_2(y)=C_3(y) \ \text{ and } \
    \bfC(\pm\infty) = \Psi(\bfU_\pm). 
\end{equation}

The set of equilibria for $\bfR$ is a two-parameter family, namely
\[
\bigset{\bfc\in \mfC}{ \bfR(\bfc)=0} = \bigset{(A,B,AB)}{ A, B\geq 0}.
\]
We can choose the stoichiometric matrix 
\[
\bbQ = \bma{ccc} 1&0&1\\ 0&1&1\ema \in \R^{2\ti 3} 
\]
and obtain $\bfu=\binom{u_1}{u_2} = \bbQ\bfc \in \mfU:={[0,\infty[}^2$. The reduction
function $\Psi:\mfU\to \mfC$ can be  calculated explicitly in the form
\[
\Psi(u_1,u_2)=\frac12
  \ThreeVect{ u_1{-}u_2{-}1 + s(\bfu) }
            { u_2{-}u_1{-}1 + s(\bfu) }
            { u_1{+}u_2{+}1 - s(\bfu) } \quad
   \text{with  } s(\bfu):=\sqrt{(1{+}u_1{+}u_2)^2-4u_1u_2}.
\]
To extend $s$ to a function $s:\R^2\to \R$ we simply set $s(u_1,u_2)=1+u_1+u_2$
whenever $u_1\leq 0$ or $u_2\leq 0$ and observe that $s$ is globally Lipschitz
continuous. Moreover, $s_j(\bfu)=\pl_{u_j} s(\bfu)$ satisfies $s_1(\bfu)\leq
1$, $s_2(\bfu)\leq 1$ and $s_1(\bfu) +s_2(\bfu)\geq 0$ for all $\bfu\in \R^2$. 

From this we can calculate the function $\bfA(\bfu)=\bbQ\bfD\Psi(\bfu)$ with
$\bfD=\mafo{diag}(d_j)$:
\[
\bfA(\bfu)= \frac12 \binom{(d_1{+}d_3) u_1 + (d_3{-}d_1)(1{+}u_2{-}s(\bfu)) }
  { (d_2{+}d_3) u_2 + (d_3{-}d_2)(1{+}u_1{-}s(\bfu)) } .
\]
For general $\rmC^1$ functions $\bfA$ we have the equivalence
\[
\forall\, \bfu,\wt\bfu:\ \langle\bfA(\bfu){-}\bfA(\wt\bfu), \bfu{-}\wt\bfu \rangle \geq
\ALPHA |\bfu{-}\wt\bfu |^2 \quad \Longleftrightarrow \quad 
\forall\, \bfu:\ \frac12\big(\rmD\bfA(\bfu){+}\rmD\bfA(\bfu)^\top\big)\geq \ALPHA
I_{m\ti m}.
\]
Abbreviating $s_j=\pl_{u_j}s(\bfu)$ and $\delta_j=1{-}d_j/d_3$ for $j=1,2$ we find 
\[
\frac12\big(\rmD\bfA(\bfu){+}\rmD\bfA(\bfu)^\top\big)= \frac{d_3}2
\bma{cc} 2- \delta_1 -\delta_1 s_1   & 
\frac12\big(\delta_1{+}\delta_2{-} \delta_1s_2 {-} \delta_2s_1  \big)\\
\frac12\big(\delta_1{+}\delta_2{-} \delta_1s_2 {-} \delta_2s_1 \big)&
2-\delta_2 - \delta_2s_2 \ema 
=:\bfG .
\]

For $d_1=d_2=d_3$ we have $\delta_1=\delta_2=0$ and obtain $\bfG=d_3 I_{2\ti
  2}$ giving monotonicity with $\ALPHA= d_3>0$. Using $s_j\in
[-1,1]$ it is also easy to show that $|\delta_1|,|\delta_2|<1/2$ is sufficient
for showing that $\bfG$ is positive definite. More precisely, we have the
following result.

\begin{lemma}[Monotonicity]
\label{le:Monot.bfA}
The function $\bfA: \R^2\to \R^2$ is strictly monotone ($\exists\,
\ALPHA>0$ $\forall\, \bfu,\bfw\in \R^2{:}$ $\langle \bfA(\bfw){-}\bfA(\bfu),
\bfw{-}\bfu\rangle \geq \ALPHA |\bfw{-}\bfu|^2$) if and only if 
\[
(3{-}\sqrt8\,)d_3 < d_j < (3{+}\sqrt8\,) d_3 \quad \text{ for }j=1,\ 2 .
\]  
\end{lemma}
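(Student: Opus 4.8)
The plan is to reduce the statement to a pointwise condition on the symmetrized Jacobian $\bfG=\frac12(\rmD\bfA(\bfu)+\rmD\bfA(\bfu)^\top)$, using the equivalence recorded just before the Lemma: strict monotonicity of $\bfA$ on $\R^2$ holds iff there is $\ALPHA>0$ with $\bfG\geq \ALPHA I_{2\ti2}$ for all $\bfu$. Since $\bfG$ depends on $\bfu$ only through $s_1,s_2\in[-1,1]$ with the additional constraint $s_1+s_2\geq 0$, and since the entries of $\bfG$ are \emph{affine} in $(s_1,s_2)$, positivity of $\bfG$ over all $\bfu\in\R^2$ is equivalent to positivity over the (closed, bounded) parameter region $\calS:=\bigset{(s_1,s_2)\in[-1,1]^2}{s_1+s_2\geq 0}$. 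I would first make this reduction precise, noting that every point of $\calS$ is actually attained (e.g. by letting $\bfu$ range over $\R^2$ or by continuity/density), so that strict monotonicity with some $\ALPHA>0$ is equivalent to $\min_{(s_1,s_2)\in\calS}\lambda_{\min}(\bfG)>0$, and also equivalent to $\bfG>0$ (strictly) at every corner of $\calS$ since a $2\ti2$ symmetric matrix affine in parameters over a convex polytope attains its smallest eigenvalue at an extreme point.

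Next I would compute, writing $\delta_j=1-d_j/d_3$, the two scalar conditions that $\bfG>0$ is equivalent to: $G_{11}>0$ and $\det\bfG>0$ (with $G_{22}>0$ then automatic). Here $G_{11}=d_3(2-\delta_1-\delta_1 s_1)/2$, $G_{22}=d_3(2-\delta_2-\delta_2 s_2)/2$, and $G_{12}=\frac{d_3}4(\delta_1+\delta_2-\delta_1 s_2-\delta_2 s_1)$. Since $|s_j|\leq1$, positivity of $G_{11}$ for all admissible $s_1$ reduces to $2-\delta_1-\delta_1 s_1>0$ for $s_1=\pm1$, i.e. $2-2\delta_1>0$ and $2>0$; the first gives $\delta_1<1$, i.e. $d_1>0$, which is assumed, so $G_{11}$ (and similarly $G_{22}$) never causes trouble — the binding constraint is entirely in $\det\bfG$. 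Then $\det\bfG>0$ for all $(s_1,s_2)\in\calS$ must be checked; evaluating at the corners $(1,1)$, $(1,-1)$, $(-1,1)$, $(0,-1)$... — actually the extreme points of $\calS$ are $(1,1),(1,-1),(-1,1)$ — I would compute $4\det\bfG/d_3^2$ at each of these three vertices as a quadratic expression in $\delta_1,\delta_2$, simplify, and determine for which $(\delta_1,\delta_2)$ all three are positive. The symmetry of the problem under $1\leftrightarrow2$ reduces this to essentially one inequality plus its swap.

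The main computational obstacle — and the place where the claimed bound $(3-\sqrt8)d_3<d_j<(3+\sqrt8)d_3$ must emerge — is the analysis of $\det\bfG$ at the relevant extreme point(s). I would expect the worst case to be the vertex $(s_1,s_2)=(1,1)$ (or $(-1,-1)$, which is excluded by $s_1+s_2\geq0$, so the genuinely worst admissible corner needs care), where $G_{12}$ is largest in magnitude relative to the diagonal. At $(1,1)$ one gets $\det\bfG=\frac{d_3^2}4\big((2-2\delta_1)(2-2\delta_2)-(\delta_1+\delta_2-\delta_1-\delta_2)^2\big)$ — wait, at $(1,1)$ the off-diagonal is $\frac{d_3}4(\delta_1+\delta_2-\delta_1-\delta_2)=0$, so that vertex is harmless; the real constraint comes from $(1,-1)$ and $(-1,1)$, where $G_{12}=\frac{d_3}4(\delta_1+\delta_2-\delta_1 s_2-\delta_2 s_1)$ becomes $\frac{d_3}4(2\delta_1)$ resp. $\frac{d_3}4(2\delta_2)$ and the diagonal terms read off accordingly. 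Translating $\det\bfG>0$ there into a condition on $d_j/d_3$ and solving the resulting quadratic inequality $t^2-6t+1<0$ (for $t=d_j/d_3$) yields exactly $3-\sqrt8<t<3+\sqrt8$. I would then conclude by remarking that strict inequality in the vertex conditions is equivalent to $\ALPHA=\min_{\calS}\lambda_{\min}(\bfG)/1>0$, so the stated open condition on $d_1,d_2$ is both necessary and sufficient, and that the extremal cases $d_j=(3\pm\sqrt8)d_3$ give only monotonicity with $\ALPHA=0$ (degenerate).
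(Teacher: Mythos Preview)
Your plan is correct and follows essentially the same route as the paper: reduce strict monotonicity to $\bfG>0$ on the closed triangle $\calS=\{(s_1,s_2):s_1\leq1,\ s_2\leq1,\ s_1{+}s_2\geq0\}$, use concavity to reduce to the three vertices $(1,1),(1,-1),(-1,1)$, and compute there to obtain $|\delta_j+2|<\sqrt8$, i.e.\ $(3{-}\sqrt8)d_3<d_j<(3{+}\sqrt8)d_3$. The only cosmetic difference is that the paper invokes concavity of $\det\bfG$ (verifying explicitly that the quadratic part is $-(\delta_2 s_1{-}\delta_1 s_2)^2$), whereas you invoke concavity of $\lambda_{\min}(\bfG)$ via affinity of the entries; both arguments localize the minimum to the extreme points and lead to the same vertex computations.
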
 
\begin{proof} We keep $\delta_1$ and $\delta_2$ fixed and 
observe that $\mu(s_1,s_2):=\det \bfG$ is a quadratic polynomial in
$(s_1,s_2)$ which is concave, as the quadratic terms can be combined to
$-(\delta_2 s_1{-}\delta_1 s_2)^2$. As all $(s_1,s_2)$ lie in the triangle
$S:=\bigset{(s_1,s_2)}{ s_1\leq 1, \ s_2\leq 1, s_1{+}s_2\geq 0}$ the assertion
follows if we can show that $\min\bigset{\mu(s_1,s_2) }{ (s_1,s_2)\in S} $ is
positive. By concavity the minimum is attained in one of the three corners
because they are the extremal points.

We have $\mu(1,1)>0$ whenever $\delta_1<1$ and $\delta_2<1$. Moreover,
$\mu(1,-1)>0$ holds for $|\delta_1{+}2|< \sqrt8$ and $\mu(-1,1)>0$ holds for
$|\delta_2{+}2|< \sqrt8$.  Inserting $\delta_j=1-d_j/d_3$, the desired result
follows.
\end{proof}

Under the assumptions of the above monotonicity result, our existence theory in
Theorem \ref{th:VectValProf} provides unique similarity profiles
$\bfU:\R \to \R^2$ connecting $\bfU_-$ and $\bfU_+$. These solutions give rise
to similarity profiles $\bfC = \Psi{\circ} \bfU$ connecting $\Psi(\bfU_+)$ and
$\Psi(\bfU_+)$ if and only if $\bfU(y) \in \mfU={[0,\infty[}^2$ for all $y \in \R$, thus
providing $\bfC(y)=\Psi(\bfU(y)) \in \mfC={[0,\infty[}^3$. In general
we cannot guarantee this condition, but Corollary \ref{co:UniformBdd} provides an
estimate of the form
\[
\big| \bfU(y) - \wt\bfu_\pm(y) \big| \leq C_* |\bfU_+{-} \bfU_-| = C_*  \Delta_\pm,
\]
where $C_*$ only depends on $d_1$, $d_2$, and $d_3$, but not on $\bfU_\pm$.  As
$\wt\bfu_\pm(y) $ takes values on the straight line connecting $\bfU_-$ and
$\bfU_+$, we conclude that our abstract theory is applicable if
$ (3{-}\sqrt8)d_3< d_1,d_2< (3{+}\sqrt{8})d_3 $ and $|\bfU_+{-} \bfU_-|$ is
sufficiently small compared to the distance of $\bfU_+$ and $\bfU_-$ from the
boundary of $\mfU$. Then similarity profiles $\bfC:\R \to \R^3$ solving
\eqref{eq:ProfEqn3Spec} exist and are unique.

In the present example we obtain nonmonotone profiles $\bfC:\R\to
\mfC\subset\R^3$. For this, consider the case $d_1=d_2$ and the limits 
\[
\bfC_-=(A,B,AB)^\top \quad \text{and} \quad
\bfC_+= (B,A,AB)^\top\quad \text{ with } A\neq B. 
\]
Our uniqueness result and the reflection symmetries $x \to -x$ and
$(c_1,c_2) \to (c_2,c_1)$ imply that the stationary profile $\bfC$ satisfies
$C_1(y)=C_2(-y)$ and $C_3(y)=C_3(-y)$. Using $C_1(y)C_2(y)=C_3(y)$ for all
$y\in \R$ we see that $C_3$ cannot be constant, hence it must be
nonmonotone. In Figure \ref{fig:Nonmonotone} we show an example.
\begin{figure}
\begin{tikzpicture}
\node (PP) at (0,0){\includegraphics[width=0.8\textwidth]{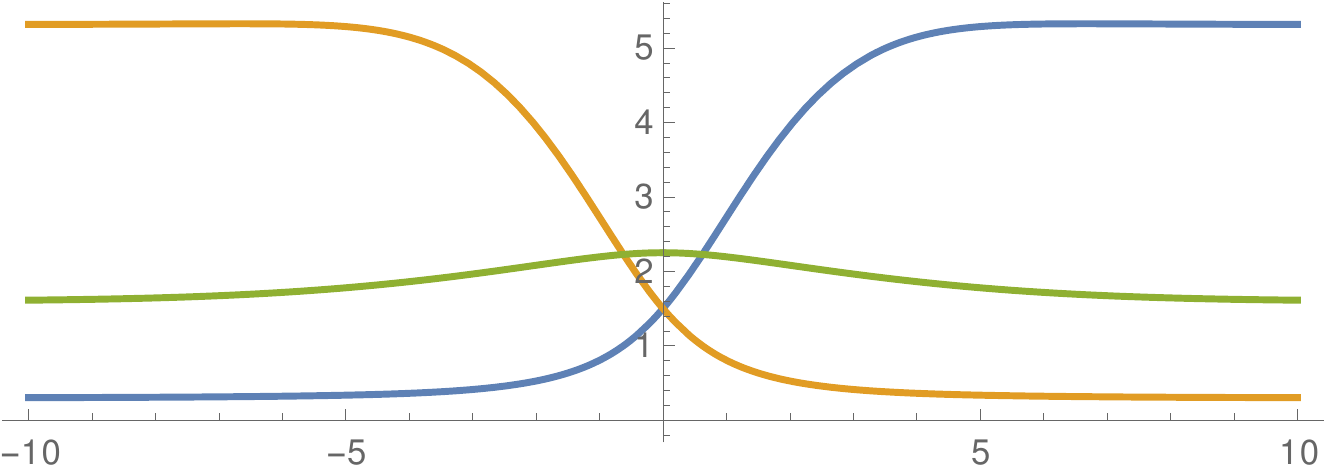}};
\node at (-6,1.7) {$C_1(y)$};
\node at (+6,1.7) {$C_2(y)$};
\node at (-6,-0.2) {$C_3(y)$};
\end{tikzpicture}
\caption{Solution $\bfC=(C_1(y),C_2(y),C_3(y))$ of \eqref{eq:ProfEqn3Spec} for
  $d_1=d_2=2$ and $d_3=10$ with limiting values $\bfC_-\approx(5.3,0.3,1.6)$ and 
$\bfC_+\approx(0.3,5.3,1.6) $. This symmetric solution was obtained by starting with
$\bfC(0)=(1.5,1.5,2.25)$ and $\bfC'(y)=(-1,1,0)$.}
\label{fig:Nonmonotone}
\end{figure} 

An interesting open question is whether there is a stationary profile $\bfC$
connecting the limiting cases 
\[
\bfC_-=\Psi(1,0)=(1,0,0)^\top  \quad \text{and} \quad
\bfC_-=\Psi(0,1)=(0,1,0)^\top .
\]
The profile would see only one of the species $X_1$ or $X_2$ in the limits to
$\pm\infty$, however in the middle region all three species must be present to allow
the generation of the other species.

\subsection{Two reactions for three species}
\label{su:TwoReact3Spec} 

Consider the two reactions $2X_1\rightleftharpoons X_2$ and  $  X_2
\rightleftharpoons X_3$ giving 
\begin{equation}
  \label{eq:2React3Spec}
  \pl_\tau \bfc = \bfD \pl_{y}^2 \bfc
  -k_1\big(c_1^2-c_2\big)\ThreeVect2{-1}0 
  - k_2  \big(c_2-c_3\big) \ThreeVect01{-1}. 
\end{equation}
The set of equilibria is the one-parameter family given by 
\[
\bigset{\bfc\in \mfC}{ \bfR(\bfc)=0} = \bigset{(A,A^2,A^2)}{ A\geq 0}.
\]
Note that the RDS system has invariant regions of the form $\Sigma:= [b,B]\ti
[b^2,B^2] \ti [b^2,B^2]$ for arbitrary $0\leq b < B <\infty$, see
\cite[Chap.\,14\,§B]{Smol94SWRD}. This means that any solution satisfying $\bfc(0,x)\in
\Sigma$ for all $x\in \R$  also satisfies $\bfc(t,x)\in \Sigma$ for all $t>0$
and $x \in \R$. Thus, a similarity profile connecting $\bfC_-=(b,b^2,b^2)$ and
$\bfC_+=(B,B^2,B^2)$ is expected to lie in the invariant region $\Sigma$. 

The stoichiometric matrix is $\bbQ = (1\ \ 2 \ \ 2) \in \R^{1\ti 3}$ and 
\[
u= \bbQ \bfc = c_1 + 2 c_2+ 2 c_3 \quad \text{yields }
\Psi(u)=\ThreeVect{\sigma(u)}{(u{-}\sigma(u))/4}{(u{-}\sigma(u))/4} \text{ with
  } \sigma(u)=(\sqrt{1{+}16u}-1)/8.
\]
With $\sigma'(u)=1/\sqrt{1{+}16u}\in [0,1]$ we easily see that all mappings $u
\mapsto \Psi_j(u)$ are monotonously increasing. Moreover, the function $A(u)=
\bbQ\bfD \Psi(u)$ satisfies
\[
A(u)=  \tfrac{d_2{+}d_3}2 \,u + \big( d_1{-}
\tfrac{d_2{+}d_3}2\big) \,\sigma(u) \quad \text{and } 
\min\big\{ d_1, \tfrac{d_2{+}d_3}2\big\} \leq  A'(u) \leq 
\max\big\{ d_1, \tfrac{d_2{+}d_3}2\big\}.
\]
Thus, the scalar theory of Section \ref{se:ScalProfiles} is applicable and for
$0\leq U_- \leq U_+ <\infty$ there exists a unique similarity profile $U\in
\rmC^\infty(\R; [U_-,U_+])$ that is monotonously increasing.  

As a consequence, the profile equation  
\begin{equation}
  \label{eq:ProfEqn3S2R}
\begin{aligned}  
 \bfD \bfC''(y) + \frac y2  \bfC'(y) + &\lambda_1(y) \ThreeVect2{-1}0
  +\lambda_2(y) \ThreeVect01{-1} =0, \\ 
&    C_1(y)^2=C_2(y)=C_3(y) \ \text{ and } \
    \bfC(\pm\infty) = \ThreeVect{B_\pm}{B_\pm^2}{B_\pm^2}
\end{aligned}
\end{equation}
has for all $B_-\leq B_+$ a unique solution $\bfC$ and each component $C_j$ is
monotonously increasing, and hence lying in the invariant region
$\Sigma=[B_-,B_+]\ti [B_-^2,B_+^2] \ti [B_-^2,B_+^2]$.

\section{Various systems with similarity profiles}
\label{se:VariousSyst}

In this section we mention the connection of our theory to two more systems in
which similarity profiles play a nontrivial role.  The first example concerns the
diffusive mixing between role patterns in the real Ginzburg-Landau equation as
studied in \cite{BriKup92RGGL,GalMie98DMSS}. The second example is a system of
two degenerate parabolic equations that are coupled to satisfy a
thermodynamical conservation law.

\subsection{Profiles connecting roles in the Ginzburg-Landau equation}
\label{su:GinzburgLandau}

For a complex-valued amplitude $Z(t,x) \in \C$ the real 
Ginzburg-Landau equation (i.e. the coefficients are real)
\begin{equation}
  \label{eq:RGLeqn}
  \dot Z = Z_{xx} + Z - |Z|^2 Z
\end{equation}
is an important model in bifurcation theory and pattern formation. It has an
explicit two-parameter family of steady state pattern  in form of the  
role solutions $Z(x)=U_{\eta,\varphi}(x):=\sqrt{1{-}\eta^2} \:\ee^{\ii (\eta
  x+\varphi)}$ with wave 
number $\eta \in [-1,1]$ and phase $\varphi\in [0,2\pi]$.

Starting from \cite{BriKup92RGGL,ColEck92SPSG}, it was shown in
\cite{GalMie98DMSS} that asymptotically self-similar profiles exist that
connect two different role solutions $U_{\eta_-,\varphi_-}$ at $x\to -\infty$
and $U_{\eta_+,\varphi_+} $ at $x\to \infty$. Indeed, the monotone operator
approach used in Theorem \ref{th:VectValProf} for solving the profile equation
was initiated in \cite[Thm.\,3.1]{GalMie98DMSS}.

Writing $Z=r\ee^{\ii u}$ and assuming $r(t,x)>0$, the real Ginzburg-Landau
equation can be rewritten as the coupled system $\dot r = r_{xx}  + r\big(
1{-}r^2 {-} u_x^2\big)$ , $\dot u = u_{xx} + 2\frac{r_x}r u_x$.
Assuming $r^2 + u_x^2 \approx 1$ for $t\gg 1$, one is lead to the
so-called \emph{phase diffusion equation} 
\[
\dot u =\big( A(u_x)\big)_x =A'(u_x)u_{xx} , \quad \text{where }A'(\eta)
 = \frac{ 1 {-}3\eta^2 }{ 1{-}\eta^2}  .
\] 
Introducing the local wave number $\eta(t,x)=u_x(t,x)$ one finds the
quasilinear equation 
\[
\dot \eta = \big(A(\eta)\big)_{xx} \quad \text{with } 
 A'(\eta)  >0 \text{ for } \eta\in
{\Big]\frac{-1}{\sqrt3}, \frac1{\sqrt3} \Big[}. 
\]
The existence of self-similar profiles $\ol \eta:\R\to [\eta_-,\eta_+]$ 
connecting $\eta_-$ and $\eta_+$ (where $-1/\sqrt3 < \eta_- \leq \eta_+ <
1/\sqrt3$) as well as the local convergence of the full solutions $Z(t,x)$
of  \eqref{eq:RGLeqn} to the corresponding asymptotic  
profile $\sqrt{1-\ol \eta(x/t^{1/2})}\: \ee^{\ii\, t^{1/2}\, H(x/t^{1/2})} $ with
$H'(y)= \ol \eta(y)$ is established in \cite{GalMie98DMSS} using suitable
weighted Sobolev norms.

\subsection{A coupled system motivated by thermodynamics}
\label{su:CoupledSyst}

The following degenerate parabolic system couples a velocity-like variable $v$
to an energy-like variable $w$ such that the total momentum
$\calV(v)=\int_{\R^d} v(x) \dd x$ and the total energy $\calE(v,w)=\int_{\R^d}
\big( \frac12v(x)^2 {+}w(x)\big) \dd x$ are conserved along solutions of 
\begin{subequations}
  \label{eq:SM01}
\begin{align}
  \label{eq:SM01.a}
& \dot v = \DIV\big( \eta(w) \nabla v\big), &\text{for }&(t,x)\in {]0,\infty[} \ti \R^d,
\\
  \label{eq:SM01.b}
&\dot w = \DIV \big( \kappa(w) \nabla
  w\big) + \eta(w) |\nabla v|^2 &\text{for }&(t,x)\in {]0,\infty[} \ti \R^d,
\end{align}
\end{subequations}
see \cite{Miel23TCDP} for more motivation. 
Because of the full invariance under the parabolic scaling, 
the parabolically scaled equation is independent of $\tau$:
\begin{equation}
  \label{eq:TraS1ParaSyst}
 \begin{aligned}
& \pl_\tau \wt v - \frac12\, y{\cdot} \nabla \wt v= \DIV\!\big( \eta(\wt w) \nabla
\wt v\big), \quad 
\pl_\tau \wt w- \frac12\,y{\cdot} \nabla \wt w= \DIV\!\big( \kappa(\wt w)\nabla
\wt w\big) + \eta(\wt w) \big|\nabla \wt v\big|^2.
\end{aligned} 
\end{equation}

As the system contains the porous medium equation \eqref{eq:PDE1D} with
$A(w)=\frac1{\beta+1} w^{\beta+1}$ (by simply
setting $v\equiv 0$) there are the classical Barenblatt solutions
as a steady state $(V,W)=(0,B_M)$ where $M\geq 0$ is the mass $M=\int_{\R^d}
B_M(y) \dd y$. As studied in Section \ref{se:ScalProfiles}, there are also
similarity profiles of the form $(v,w)=(0,W)$, however, we expect that it is
also possible to show that for each pair $(V_\pm,W_\pm)$ with $V_-,V_+\in \R$
and $W_-,W_+\geq 0$ there is a unique similarity profile. However,  it seems that our
monotonicity approach developed in Section \ref{se:VectorValued} cannot be used here. 
  
Nevertheless, a nontrivial explicit self-similar solution can be given in the
case $\eta(w)=\kappa(w)=w$
with the limits $(V_\pm,W_\pm)=(\pm\sqrt 2\,B,0)$ (cf.\
\cite[Ex.\,2.2]{Miel23TCDP}), namely 
\begin{equation}
  \label{eq:ExplicSol01}
  \big(V(y),W(y) \big)= 
\left\{ \ba{cl} \big( y/\sqrt{2}\;,\; B^2 {-}y^2/4\big ) &\text{for }|y|\leq 2B, \\
\big(\pm \sqrt2\,B\;,\; 0\big)& \text{for } \pm y\geq 2B, \ea\right.
\end{equation}

\paragraph*{Acknowledgments.} The research was partially supported by Deutsche
Forschungsgemeinschaft (DFG) via the Collaborative Research Center SFB\,910
``Control of self-organizing nonlinear systems'' (project number 163436311),
subproject A5 ``Pattern formation in coupled parabolic systems''.

\small

\addcontentsline{toc}{section}{References}


\begin{thebibliography}{11}\itemsep0.1em

\bibitem[Ber82]{Bert82ABSN}
M.~Bertsch: \emph{Asymptotic behavior of solutions of a nonlinear diffusion
  equation}. SIAM J. Appl. Math. \textbf{42}:1 (1982) 66--76.

\bibitem[Bot03]{Both03ILRC}
D.~Bothe: \emph{Instantaneous limits of reversible chemical reactions in
  presence of macroscopic convection}. J. Diff. Eqns. \textbf{193}:1 (2003)
  27--48.

\bibitem[BrK92]{BriKup92RGGL}
J.~Bricmont and A.~Kupiainen: \emph{Renormalization group and the
  {G}inzburg-{L}andau equation}. Comm. Math. Phys. \textbf{150}:1 (1992)
  193--208.

\bibitem[CoE92]{ColEck92SPSG}
P.~Collet and J.-P.~Eckmann: \emph{Solutions without phase-slip for the
  {G}insburg-{L}andau equation}. Comm. Math. Phys. \textbf{145}:2 (1992)
  345--356.

\bibitem[GaM98]{GalMie98DMSS}
T.~Gallay and A.~Mielke: \emph{Diffusive mixing of stable states in the
  {G}inzburg-{L}andau equation}. Comm. Math. Phys. \textbf{199}:1 (1998)
  71--97.

\bibitem[GaS22]{GalSli22DREE}
T.~Gallay and S.~Slijep\v{c}evi\'{c}: \emph{Diffusive relaxation to equilibria
  for an extended reaction-diffusion system on the real line}. J. Evol. Eqns.
  \textbf{22}:47 (2022) 1--33.

\bibitem[MHM15]{MiHaMa15UDER}
A.~Mielke, J.~Haskovec, and P.~A.~Markowich: \emph{On uniform decay of the
  entropy for reaction-diffusion systems}. J. Dynam. Diff. Eqns.
  \textbf{27}:3-4 (2015) 897--928.

\bibitem[Mie23]{Miel23TCDP}
A.~Mielke: \emph{On two coupled degenerate parabolic equations motivated by
  thermodynamics}. J. Nonlinear Sci. \textbf{33}:42 (2023) 1--55.

\bibitem[MiM18]{MieMit18CEER}
A.~Mielke and M.~Mittnenzweig: \emph{Convergence to equilibrium in
  energy-reaction-diffusion systems using vector-valued functional
  inequalities}. J. Nonlinear Sci. \textbf{28}:2 (2018) 765--806.

\bibitem[MiS20]{MieSte20CGED}
A.~Mielke and A.~Stephan: \emph{Coarse graining via {EDP}-convergence for
  linear fast-slow reaction systems}. Math. Models Meth. Appl. Sci. (M$^3$AS)
  \textbf{30}:9 (2020) 1765--1807, (In the published version, Lemma 3.4 is
  wrong. See arXiv:1911.06234v2 for the correction.).

\bibitem[MiS23]{MieSch22?CSSP}
A.~Mielke and S.~Schindler: \emph{Convergence to self-similar profiles in
  reaction-diffusion systems}. Preprint arXiv2303.01364 (2023) .

\bibitem[MPS21]{MiPeSt21EDPC}
A.~Mielke, M.~A.~Peletier, and A.~Stephan: \emph{{EDP}-convergence for
  nonlinear fast-slow reaction systems with detailed balance}. Nonlinearity
  \textbf{34}:8 (2021) 5762--5798.

\bibitem[PeR21]{PelRen20?FRLG}
M.~A.~Peletier and D.~R.~M.~Renger: \emph{Fast reaction limits via
  {$\Gamma$}-convergence of the flux rate functional}. J. Dynam. Diff. Eqns.
  (2021) Online First.

\bibitem[Sha76]{Sham76CDD3}
L.~F.~Shampine: \emph{Concentration-dependent diffusion {III. An} approximate
  solution}. Quart. Appl. Math. \textbf{33}:4 (1976) 429--431.

\bibitem[Smo94]{Smol94SWRD}
J.~Smoller, \emph{{Shock Waves and Reaction-Diffusion Equations}}, Springer,
  1994.

\bibitem[Ste21]{Step21CGED}
A.~Stephan: \emph{{EDP-convergence} for a linear reaction-diffusion system with
  fast reversible reaction}. Calc. Var. Part. Diff. Eqns. \textbf{60}:6 (2021)
  226/35\,pp.

\bibitem[{va}P77a]{VanPel77ABSN}
C.~J.~{van Duyn} and L.~A.~Peletier: \emph{Asymptotic behaviour of solutions of
  a nonlinear diffusion equation}. Arch. Rational Mech. Anal. \textbf{65}
  (1977) 363--377.

\bibitem[{va}P77b]{VanPel77CSSN}
\bysame: \emph{A class of solutions of the nonlinear diffusion equation}. Nonl.
  Analysis TMA \textbf{1}:3 (1977) 223--233.

\bibitem[V{\'a}z07]{Vazq07PMEM}
J.~L.~V{\'a}zquez, \emph{The porous medium equation. mathematical theory},
  Oxford: Clarendon Press, 2007.

\bibitem[Zei90]{Zeid90NFA2b}
E.~Zeidler, \emph{Nonlinear functional analysis and its applications.
  {II}/{B}}, Springer-Verlag, New York, 1990, Nonlinear monotone operators,
  Translated from the German by the author and Leo F. Boron.

\end{thebibliography}

\def\cprime{$'$}
\providecommand{\bysame}{\leavevmode\hbox to3em{\hrulefill}\thinspace}
\providecommand{\MR}{}

\end{document}